\newcommand{\spa}[1]{
	\IfEqCase{#1}{ %Package xstring needed for this one
		{1}{\:}
		{2}{\:\:}
		{3}{\:\:\:}
		{4}{\:\:\:\:}
		{5}{\:\:\:\:\:}
		{6}{\:\:\:\:\:\:}
		{7}{\:\:\:\:\:\:\:}
		{8}{\:\:\:\:\:\:\:\:}
		{9}{\:\:\:\:\:\:\:\:\:}
		{10}{\:\:\:\:\:\:\:\:\:\:}
		{11}{\:\:\:\:\:\:\:\:\:\:\:}
		{12}{\:\:\:\:\:\:\:\:\:\:\:\:}
		{13}{\:\:\:\:\:\:\:\:\:\:\:\:\:}
		{14}{\:\:\:\:\:\:\:\:\:\:\:\:\:\:}
		{15}{\:\:\:\:\:\:\:\:\:\:\:\:\:\:\:}
		{16}{\:\:\:\:\:\:\:\:\:\:\:\:\:\:\:\:}
		{17}{\:\:\:\:\:\:\:\:\:\:\:\:\:\:\:\:\:}
		{18}{\:\:\:\:\:\:\:\:\:\:\:\:\:\:\:\:\:\:}
		{19}{\:\:\:\:\:\:\:\:\:\:\:\:\:\:\:\:\:\:\:}
		{20}{\:\:\:\:\:\:\:\:\:\:\:\:\:\:\:\:\:\:\:\:}
		{-1}{\!}
		{-2}{\!\!}
		{-3}{\!\!\!}
		{-4}{\!\!\!\!}
		{-5}{\!\!\!\!\!}
	}[\PackageError{spa}{Undefined option to spa: #1}{}]
}
\newcommand{\subST}{_{_{ST}}}
\newcommand{\subS}{_{^{\mathbb{S}^3}}}
\newcommand{\kauf}[1]{\langle #1 \rangle\subST}
\newcommand{\kaufS}[1]{\langle #1 \rangle}
\theoremstyle{plain}
\newtheorem{thm}{Theorem}
\newtheorem{lmm}{Lemma}
\newtheorem{corol}{Corollary}
\newtheorem{prop}{Proposition}
\theoremstyle{definition}
\newtheorem{defn}{Definition}
\newtheorem{exmp}{Example}
\theoremstyle{remark}
\newtheorem*{nrmrk}{Remark}
\newtheorem*{nthm}{Theorem}
\newtheorem*{nprop}{Proposition}
\date{}
\title{\textbf{On lassos and the Jones polynomial of satellite knots}}
\date{} %8$^{\textnormal{th}}$ January 2015                                      %13 November 2014
\author{Adri\'an Jim\'enez Pascual}
\begin{document}
\maketitle

\begin{align*}
\\\\\\
&\includegraphics[scale=0.55]{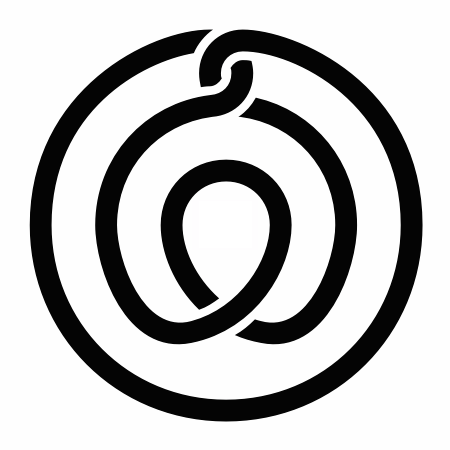}\\\\\\\\\\\\
\end{align*}

\begin{abstract}
In this master thesis, I present a new family of knots in the solid torus called \emph{lassos}, and their properties. Given a knot $K$ with Alexander polynomial $\Delta_K(t)$, I then use these lassos as patterns to construct families of satellite knots that have Alexander polynomial $\Delta_K(t^d)$ where $d\in\mathbb{N}\cup \{0\}$. In particular, I prove that if $d\in\{0,1,2,3\}$ these satellite knots have different Jones polynomials. For this purpose, I give rise to a formula for calculating the Jones polynomial of a satellite knot in terms of the Jones polynomials of its pattern and companion.
\end{abstract}

\newpage

\section{Introduction}
The Alexander polynomial was the first to be discovered and is the best known polynomial invariant for knots. It was developed in 1923 by J. W. Alexander. It was not until 1969 that J. Conway introduced a modified version of the Alexander polynomial, now called the Conway polynomial, that used a skein relation to its computation. He also presented a normalization of the Alexander polynomial, so that the Alexander polynomial of a knot $K$ would be given by an equality rather than by an equality ``up to multiplication by a unit'' of the $\mathbb{Z}[t^{-1},t]$-module $H_1(K_\infty;\mathbb{Z})$. In 1984 V. Jones discovered the so called Jones polynomial, which also can be obtained through a skein relation. In this paper, we will write $\Delta_K(t)$ for the Conway-normalized Alexander polynomial of a knot $K$ and $J(K)$ for its Jones polynomial.\\

None of the above mentioned polynomials can completely distinguish knots. It has been proven by many authors (\cite{Cr,Mo}) that there exist infinitely large families of knots sharing their polynomials. In concrete, \cite{Kan} proved that there exist infinitely many knots with the same Alexander polynomial and different Jones polynomial, as well as infinitely many knots with the same Jones polynomial and different Alexander polynomial.\\

Some other results of interest which serve as a reference point to this work are the following well-known equalities, for which we consider $K_1$ and $K_2$ to be two knots in $\mathbb{S}^3$, and we write their connected sum as $K_1\#K_2$.
\[\Delta_{K_1\#K_2}(t)=\Delta_{K_1}(t)\Delta_{K_2}(t)\]
\[J(K_1\#K_2)=J(K_1)J(K_2)\]

We will also make use of the following theorem, since we will be working with the structure and creation of satellite knots.

\begin{nthm}[\cite{Li}]\label{lick}
Let $P$ be a knot in an unknotted solid torus, $C$ be a knot in $\mathbb{S}^3$ and $Sat(P,C)$ be their satellite knot. Then, the following equality holds.
\[\Delta_{Sat(P,C)}(t)=\Delta_P(t)\Delta_C(t^n),\]
where $P$ represents $n$ times a generator of $H_1(ST)$.
\end{nthm}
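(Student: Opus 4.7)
The classical strategy is to analyze the infinite cyclic cover of the satellite exterior via a Mayer--Vietoris decomposition along the torus bounding the companion.

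First I would set up the geometry. Let $X = \mathbb{S}^3 \setminus \nu(Sat(P,C))$ be the exterior of the satellite. The torus $T = \partial(\nu(C))$ separates $X$ into the companion exterior $X_C = \mathbb{S}^3 \setminus \nu(C)$ and the pattern exterior $X_P = ST \setminus \nu(P)$, glued along $T$. Let $\phi\colon \pi_1(X) \to \mathbb{Z}$ be the abelianization sending the meridian of $Sat(P,C)$ to $1$, and let $\tilde{X} \to X$ be the corresponding infinite cyclic cover with deck transformation $t$. Then $H_1(\tilde{X})$ is a $\Lambda$-module for $\Lambda = \mathbb{Z}[t^{\pm 1}]$, whose order is $\Delta_{Sat(P,C)}(t)$.

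Next I would restrict $\phi$ to each piece. On $\pi_1(X_P)$ the meridian $\mu_P$ maps to $1$ while a longitude of $ST$ links $Sat(P,C)$ trivially and hence maps to $0$; on $\pi_1(X_C)$ the meridian $\mu_C$ has linking number $n$ with $Sat(P,C)$ (because $P$ represents $n$ times a generator of $H_1(ST)$), so $\mu_C \mapsto n$ and $\lambda_C \mapsto 0$. Consequently the preimage $\tilde{X_C}$ is a disjoint union of $n$ copies of the standard infinite cyclic cover of $X_C$, cyclically permuted by $t$ while $t^n$ acts as the usual deck transformation. As a $\Lambda$-module this gives
\[ H_1(\tilde{X_C}) \;\cong\; \Lambda \otimes_{\mathbb{Z}[s^{\pm 1}]} H_1\bigl(\widetilde{X_C}^{\mathrm{std}}\bigr), \qquad \text{order}\;=\;\Delta_C(t^n). \]
On the pattern side $\tilde{X_P}$ is connected, and its $H_1$ contributes $\Delta_P(t)$ together with a controlled $(t-1)$-type factor coming from the longitude of $ST$ acting trivially.

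The third step is the Mayer--Vietoris sequence
\[ H_1(\tilde{T}) \to H_1(\tilde{X_P}) \oplus H_1(\tilde{X_C}) \to H_1(\tilde{X}) \to H_0(\tilde{T}) \to H_0(\tilde{X_P}) \oplus H_0(\tilde{X_C}) \to H_0(\tilde{X}) \to 0. \]
The terms $H_*(\tilde{T})$ and the $H_0$ terms are explicit cyclic $\Lambda$-modules of the form $\Lambda/(t^k-1)$ (readable from the previous step, since $\tilde{T}$ is a disjoint union of $n$ open cylinders). Using multiplicativity of orders in exact sequences and watching the spurious $(t-1)$ and $(t^n-1)$ factors cancel between the $H_0$ row, the image of $H_1(\tilde{T})$, and the pattern-side contribution, one arrives at the stated identity $\Delta_{Sat(P,C)}(t) = \Delta_P(t)\,\Delta_C(t^n)$.

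The hardest part is precisely the bookkeeping of these boundary contributions: because $\tilde{X_C}$ and $\tilde{T}$ are disconnected with $n$ components while $\tilde{X_P}$ and $\tilde{X}$ are connected, the sequence is populated by $\Lambda/(t-1)$ and $\Lambda/(t^n-1)$ factors that must be tracked carefully. A secondary subtlety is upgrading the argument from ``orders up to units'' to the Conway-normalized equality; this is achieved by building a Seifert surface for $Sat(P,C)$ out of a relative Seifert surface for $P$ (with $n$ longitudinal boundary circles on $T$) together with $n$ parallel copies of a Seifert surface for $C$, so that the resulting Seifert matrix is block upper triangular and $\det(V - tV^\top) = \det(V_P - tV_P^\top)\,\det(V_C - t^n V_C^\top)$ gives the claimed factorization on the nose.
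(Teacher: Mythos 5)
The paper does not prove this theorem at all: it is quoted verbatim from Lickorish's book \cite{Li} and used as a black box, so there is no internal proof to compare against. Your sketch is the standard textbook route (and essentially the one in \cite{Li}): split the satellite exterior along $\partial\nu(C)$, observe that $\phi$ restricts to $n$ times the standard abelianization on the companion exterior so that its preimage in the infinite cyclic cover is $n$ copies of the usual cover with $t^n$ acting as the old deck transformation, and run Mayer--Vietoris. The reductions you state in steps one and two are correct, including the identification of $H_1$ of the companion piece with $\Lambda\otimes_{\mathbb{Z}[s^{\pm1}]}H_1$ of the standard cover and its order $\Delta_C(t^n)$.

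That said, what you have written is a plan, and the two places where the actual content lives are exactly the places you wave at. First, the assertion that the pattern side ``contributes $\Delta_P(t)$'' is the crux of the theorem, not a bookkeeping step: $X_P=ST\setminus\nu(P)$ is a link exterior with $H_1\cong\mathbb{Z}^2$, and identifying the order of $H_1(\widetilde{X_P})$ (for the restricted homomorphism $\mu_P\mapsto 1$, $\lambda_{ST}\mapsto 0$) with the Alexander polynomial of $P$ \emph{regarded as a knot in $\mathbb{S}^3$} requires either the Torres conditions for the link $P\cup(\text{core of the complementary torus})$ or a separate comparison with the cover of $\mathbb{S}^3\setminus\nu(P)$; nothing in your outline supplies this. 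Second, your normalization argument via Seifert matrices mis-describes the structure: the surface is built from $n$ \emph{nested parallel} copies of a Seifert surface for $C$, whose basis curves on distinct copies link one another according to $V_C$ or $V_C^{\top}$ depending on nesting order, so the companion block of $V-tV^{\top}$ is a genuine $n\times n$ block matrix whose determinant must be computed (this is where $t^n$ actually appears); it is not block upper triangular with $\det(V_C-t^nV_C^{\top})$ sitting as a diagonal block. Also note the degenerate case $n=0$ needs separate handling, since then the companion preimage has infinitely many components and the module-order argument as stated breaks down. None of these is a wrong idea, but each is a real gap between your outline and a proof.
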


In this paper we will introduce a new family of knots in the solid torus, which we will call \emph{lassos}. We will also set some properties of these lassos, and we will use them to construct satellite knots with the same Alexander polynomial. In particular, we will prove that the Alexander polynomial of a satellite knot which uses a lasso as pattern will look like the following.

\begin{nprop}
Let $L$ be a lasso of degree $d$, $C$ be a knot in $\mathbb{S}^3$ and $Sat(L,C)$ be their satellite knot. Then, the following equality holds.
\[\Delta_{Sat(L,C)}(t)=\Delta_C(t^d).\]
\end{nprop}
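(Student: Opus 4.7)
The plan is to realize the proposition as a direct consequence of Lickorish's theorem stated above. Taking the pattern to be $P = L$, that theorem yields
\[
\Delta_{Sat(L,C)}(t) \;=\; \Delta_L(t)\,\Delta_C(t^n),
\]
where $n$ is the algebraic winding number of $L$ around the core of $ST$. The proposition will follow at once from two structural properties of the lasso of degree $d$: first, that $\Delta_L(t) = 1$, and second, that $n = d$.

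For the first property, I would show that when $ST$ is embedded standardly in $\mathbb{S}^3$, the lasso $L$, viewed simply as a knot in $\mathbb{S}^3$, is the unknot. This should be read off the construction of a lasso given in the earlier sections: a lasso is built from an essentially unknotted strand together with a clasping loop that is nontrivial only because of its linking with the complementary solid torus. Once one forgets the torus and works in $\mathbb{S}^3$, the whole configuration can be disentangled by an ambient isotopy (or an explicit sequence of Reidemeister moves on a diagram of $L$). Unknottedness then gives $\Delta_L(t) = 1$ under the Conway normalization.

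For the second property, the degree $d$ of a lasso should be defined precisely so that $[L] \in H_1(ST) \cong \mathbb{Z}$ is $d$ times the chosen generator; equivalently, a meridional disk of $ST$ meets $L$ in algebraic count $d$. With such a definition in hand, this step is just a diagrammatic bookkeeping exercise: fix a meridian disk in a standard diagram of $L$ and add signs of intersections.

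Combining the two facts with Lickorish's formula immediately produces $\Delta_{Sat(L,C)}(t) = 1 \cdot \Delta_C(t^d) = \Delta_C(t^d)$. The main obstacle is the first step, since everything else is either the invocation of an existing theorem or a definitional verification. The unknottedness of $L$ in $\mathbb{S}^3$ is really the geometric heart of the statement: it is what makes the pattern contribute nothing to the Alexander polynomial and lets the companion's polynomial appear in the clean form $\Delta_C(t^d)$.
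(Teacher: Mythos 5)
Your skeleton is the same as the paper's: invoke Lickorish's theorem, check that $\Delta_L(t)=1$, and check that the winding number $n$ equals the degree $d$. But there is a genuine gap in how you handle the second check, and you have also inverted where the difficulty lies. In the paper the degree is \emph{not} defined as the homology class of $L$ in $H_1(ST)$; it is defined purely combinatorially from the twist parameters, via the recursion $\bar{r}_0=1$, $\bar{r}_i=(-1)^{1+r_i}$ if $\bar{r}_{i-1}=1$ and $\bar{r}_i=1$ if $\bar{r}_{i-1}=-1$, with $\deg(L)=\sum_{i=0}^m\bar{r}_i$. So the statement $n=d$ is not a ``definitional verification'' --- it is precisely the content that has to be proved, and it is where the paper spends essentially all of its effort. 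The paper does this by applying the Seifert algorithm to the normal diagram of $L$, observing that $m+1$ Seifert circuits encircle the annulus, pairing off adjacent circuits of opposite orientation with annuli, and checking that whether circuit $c_i$ reverses or preserves orientation relative to $c_{i-1}$ is governed by the parity of $r_i$ --- which is exactly what the recursion for $\bar{r}_i$ encodes. Your proposal gestures at the right computation (signed intersections with a meridian disk, which the paper's remark notes is an equivalent route via linking numbers), but until you actually relate that signed count to the $\bar{r}_i$'s, you have not connected the theorem's $n$ to the proposition's $d$.

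Conversely, the step you single out as ``the geometric heart'' --- unknottedness of $L$ in $\mathbb{S}^3$ --- is the easy part: a lasso is by construction a consecutive twisted knot sum of nested unknots, and once the core of $ST$ is forgotten every clasp can be undone, so $L$ is trivially the unknot and $\Delta_L(t)=1$. The paper dispatches this in one sentence. So your proof is not wrong in outline, but the substantive work (matching the combinatorial degree to the winding number) is exactly the part you have deferred.
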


Delving deeper into the structure of satellite knots, we will also present some new general results regarding these. In concrete, it will be proven in this paper that the Jones polynomial of a satellite knot can be expressed as follows.

\begin{nthm}
Let $P$ be a knot in an unknotted solid torus, $C$ be a knot in $\mathbb{S}^3$ and $Sat(P,C)$ be their satellite knot. Then, the following equality holds.
\[J(Sat(P,C))=J\subST(P)\Big|_{z^k\subST=J(C;\:k)},\]
where $J\subST(P)$ represents the Jones polynomial in the solid torus and $J(C;k)$ the $k$-parallel Jones polynomial.\\
\end{nthm}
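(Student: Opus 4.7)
The natural approach is to pass through the Kauffman bracket and exploit the skein module structure of the solid torus. Recall that the Kauffman bracket skein module of $ST$ is a free $\mathbb{Z}[A^{\pm 1}]$-module with basis $\{z^{k}\}_{k\geq 0}$, where $z^{k}$ stands for $k$ disjoint parallel copies of the core. Taking a diagram $D_P$ of $P$ in an annulus and applying the skein relations purely inside the annulus produces an expansion
\[\langle D_P\rangle_{ST} \;=\; \sum_{k\geq 0} a_k(A)\, z^{k}, \]
for certain Laurent polynomials $a_k(A)\in\mathbb{Z}[A^{\pm 1}]$.

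Next, I would fix a diagram $D_C$ of $C$ and assemble the satellite diagram $D_{Sat}$ by inserting $D_P$ in an annular neighborhood of $D_C$ together with the twist correction that makes the embedding $\phi\colon ST\hookrightarrow S^3$ realize the $0$-framed regular neighborhood of $C$. Because each skein resolution performed in the first step is supported inside an annulus that is embedded unchanged under $\phi$, replaying the same sequence of resolutions inside $S^3$ yields
\[\langle D_{Sat}\rangle_{S^3} \;=\; \sum_{k\geq 0} a_k(A)\, \langle C^{(k)}\rangle_{S^3},\]
where $C^{(k)}$ denotes $k$ parallel copies of $C$ in the $0$-framing, i.e.\ $\phi(z^{k})$.

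The remaining step is to convert both sides into Jones polynomials via the normalization $(-A)^{-3w(\cdot)}$ evaluated at $A=t^{-1/4}$. The key observation is that the writhe of $D_{Sat}$ splits additively into the writhe of $D_P$ (from the crossings inside the annulus) plus the writhe of the $0$-framed parallel diagram of $C$ (from the cabled companion, once the twist correction is accounted for). Absorbing the pattern's writhe factor into the coefficients defines
\[J_{ST}(P) \;=\; \sum_{k\geq 0} a_k(A)(-A)^{-3w(D_P)}\Big|_{A=t^{-1/4}}\, z^{k}_{ST},\]
while the companion's writhe factor converts each $\langle C^{(k)}\rangle_{S^3}$ into $J(C;k)$. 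Linearity then yields the claimed substitution formula.

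The main obstacle is the writhe/framing bookkeeping in this final step. The Kauffman bracket is only a regular-isotopy (framed) invariant, whereas $J_{ST}(P)$ and $J(C;k)$ are ambient-isotopy invariants, so one must verify that the twist correction realizing the $0$-framed embedding exactly cancels the framing discrepancy between the blackboard cabling and the true $0$-framed $k$-parallel, at each crossing of $D_C$ and on the twist-correction region. Once this local cancellation is established, the skein decomposition above finishes the proof by linearity.
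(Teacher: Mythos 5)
Your plan follows the same route as the paper: expand $\langle P\rangle$ in the Kauffman bracket skein module of $ST$ over the basis $\{z^k\}$, replay the same resolutions inside $\mathbb{S}^3$ after cabling the companion and inserting the twist correction, and then reconcile the writhe normalizations. The one place where the proposal as written goes wrong is your second displayed equation. If $wr(C)\neq 0$, it is \emph{not} true that $\langle D_{Sat}\rangle_{\mathbb{S}^3}=\sum_k a_k(A)\,\langle C^{(k)}\rangle_{\mathbb{S}^3}$ with the same coefficients $a_k(A)$: when a state of the resolution of the pattern's crossings leaves only $k<M$ strands running around the companion, the $M-k$ turn-back strands must be retracted through the twist-correction box carrying the $-2wr(C)$ half-twists (and through the cabled crossings), and each retraction contributes Reidemeister-I factors to the bracket. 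The paper isolates this in its theorem on the bracket of a composite diagram: the correct substitution is $z^k\mapsto(-A^{-3})^{-(M-k)wr(C)}\,\langle C^k(-2wr(C))\rangle$. These extra factors are not optional bookkeeping; they are exactly what converts the single global normalization $(-A^{-3})^{wr(P)+M\,wr(C)}$ (the writhe of the composite diagram, computed in the paper's lemma) into the per-term normalization $(-A^{-3})^{k\,wr(C)}$ that produces $J(C;k)=J\big(C^k(-2wr(C))\big)$ for each $k$ separately. With your displayed expansion taken literally, the $k$-th term would be off by $(-A^{-3})^{(M-k)wr(C)}$, so the ``local cancellation'' you defer to the end would in fact fail.

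So the gap is concrete: you need (i) the writhe formula $wr(Sat(P,C))=wr(P)+M\,wr(C)$, and (ii) the corrected bracket expansion with the factors $(-A^{-3})^{-(M-k)wr(C)}$; only the combination of the two yields the cancellation you assert, term by term in $k$. Your final paragraph correctly locates where the difficulty lives, but it presents the skein decomposition as already established and the cancellation as a mere verification, when in fact the decomposition itself must be modified for the cancellation to exist. Aside from this, the strategy, the use of the skein basis of the annulus, the additive splitting of the writhe, and the appeal to the geometric interpretation of $J(C;k)$ as the invariant of the $0$-framed cable are all the same as in the paper.
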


Making use the results here presented, we will construct two infinite families of knots using lassos. All the members in each family share their Alexander polynomial. Yet, we will use the above result to prove the next two theorems.

\begin{nthm}
Consider the family of lassos $L(r)$ and let $C$ be a knot in $\mathbb{S}^3$ such that $\Delta_C(t)\neq 1$ and $J(C;2)\neq J(U;2)$. Then, $Sat(L(r_1),C)$ and $Sat(L(r_2),C)$ are different knots for $r_1\neq r_2$.
\end{nthm}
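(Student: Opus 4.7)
My plan is to exploit the satellite formula for the Jones polynomial (the Theorem in the excerpt) to distinguish $Sat(L(r_1),C)$ from $Sat(L(r_2),C)$. Although the earlier Proposition ensures that all members of the family $\{Sat(L(r),C)\}_r$ share the same Alexander polynomial $\Delta_C(t^d)$, the solid-torus Jones polynomials $J\subST(L(r))$ should depend non-trivially on $r$, and this dependence must survive the substitution $z^k\subST \mapsto J(C;k)$ under the two hypotheses on $C$.

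First, I would compute $J\subST(L(r))$ in the Kauffman bracket skein module of the solid torus. Since this skein module is a polynomial algebra generated by the core $z\subST$, one expects an expansion
\[
J\subST(L(r)) \;=\; \sum_{k\geq 0} \alpha_k(r)\, z^k\subST,
\]
where the $\alpha_k(r)$ are Laurent polynomials in the Kauffman bracket variable. By drawing a standard diagram of $L(r)$ and concentrating the dependence on $r$ into a single local twist region, the skein relations should show that only finitely many $\alpha_k(r)$ depend on $r$, and in a transparent (e.g.\ affine in $r$) manner.

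Next, applying the satellite formula gives
\[
J(Sat(L(r),C)) \;=\; \sum_{k\geq 0} \alpha_k(r)\, J(C;k),
\]
so for $r_1\neq r_2$ the difference is $\sum_k [\alpha_k(r_1)-\alpha_k(r_2)]\, J(C;k)$. If the lassos $L(r)$ are constructed so that only a small number of the $\alpha_k$ (say $\alpha_1$ and $\alpha_2$) carry the dependence on $r$, then this difference simplifies to an expression of the form $\beta(r_1,r_2)\,\bigl(J(C;2)-J(U;2)\bigr)$ for some explicit Laurent polynomial $\beta(r_1,r_2)$ that is non-zero whenever $r_1\neq r_2$. The hypothesis $J(C;2)\neq J(U;2)$ then forces the difference of Jones polynomials to be non-zero, and hence $Sat(L(r_1),C)\neq Sat(L(r_2),C)$. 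The assumption $\Delta_C(t)\neq 1$ is used to ensure that $C$ is genuinely knotted, so that the satellite construction does not degenerate and the two-variable structure above is meaningful.

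The main obstacle will be the first step: carrying out the explicit skein computation of $J\subST(L(r))$ and isolating exactly which coefficients $\alpha_k(r)$ vary with $r$. This is a careful but ultimately mechanical exercise in the Kauffman bracket skein module of the solid torus, and once the precise dependence on $r$ is extracted, the rest of the argument reduces to verifying the non-vanishing of a single distinguishing term under the given hypothesis on $J(C;2)$.
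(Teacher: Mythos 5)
Your overall strategy is the same as the paper's: expand $J\subST(L(r))=\sum_k\alpha_k(r)\,z^k\subST$ in the skein module of the solid torus, push it through the satellite substitution $z^k\subST\mapsto J(C;k)$, and show that for $r_1\neq r_2$ the difference is a nonzero multiple of $J(C;2)-J(U;2)$. Your guess at the shape of the answer is even correct: the paper computes
\[
J\subST(L(r))=(-t)^{-r}z^0\subST-t^{1/2}\,\frac{1-(-t)^{-r}}{t+1}\,z^2\subST,
\]
so only $\alpha_0$ and $\alpha_2$ occur, and the difference of two such expressions is $\bigl((-t)^{-r_1}-(-t)^{-r_2}\bigr)\bigl[z^0\subST+\frac{t^{1/2}}{t+1}z^2\subST\bigr]$, which after substitution becomes a multiple of $J(C;2)-J(U;2)$. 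The problem is that everything you wrote defers exactly this computation, and that computation \emph{is} the proof. In particular, the non-vanishing of your $\beta(r_1,r_2)$ is not free: the paper has to restrict to $|t|>1$ to argue that $(-t)^{-r_1}\neq(-t)^{-r_2}$ and that the bracketed factor does not vanish after substitution, and it organizes the argument as a telescoping comparison of $L(r)$ with $L(r+2)$ within a fixed parity class. Until the coefficients $\alpha_k(r)$ are actually produced (the paper derives them from its recursive skein formula for lassos and an induction on $r$), there is no argument here, only a correct prediction of what the argument will yield.

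There is also one concrete error: your claim that $\Delta_C(t)\neq 1$ serves only to ensure the satellite construction ``does not degenerate.'' The construction is perfectly well defined for unknotted $C$; that is not what the hypothesis is for. In the paper it does real work: the Jones-polynomial comparison is carried out only for $r_1\equiv r_2\pmod 2$, and pairs of different parity are separated by the Alexander polynomial instead, since $L(r)$ has degree $0$ for one parity and degree $2$ for the other, so one satellite has Alexander polynomial $1$ and the other $\Delta_C(t^2)$; these differ precisely because $\Delta_C(t)\neq 1$. If you intend to avoid this case division by showing the Jones difference is nonzero for \emph{all} $r_1\neq r_2$ (which the explicit formula above does permit), you must actually exhibit that factorization across parities; otherwise you need the parity split and, with it, the correct use of the hypothesis $\Delta_C(t)\neq 1$. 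As written, your proposal neither performs the computation nor accounts for why that hypothesis appears in the statement.
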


\begin{nthm}
Consider the family of lassos $L(1,r)$ and let $C$ be a knot in $\mathbb{S}^3$ such that $\Delta_C(t)\neq 1$ and $J(C;3)\neq J(C)J(U;3)$. Then, $Sat(L(1,r_1),C)$ and $Sat(L(1,r_2),C)$ are different knots for $r_1\neq r_2$.\\
\end{nthm}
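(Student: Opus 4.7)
The plan is to distinguish the satellite knots $Sat(L(1,r_1),C)$ and $Sat(L(1,r_2),C)$ by showing that their Jones polynomials differ, since the Proposition guarantees they share the same Alexander polynomial $\Delta_C(t^3)$ (as $L(1,r)$ is a lasso of degree $3$) and so this invariant is useless in this family. The main tool is the formula from the preceding Theorem,
\[
J(Sat(L(1,r),C)) = J\subST(L(1,r))\Big|_{z^k\subST = J(C;k)}.
\]

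The first step is to compute $J\subST(L(1,r))$ explicitly. Diagrammatically, passing from $L(1,r-1)$ to $L(1,r)$ inserts one extra twist; applying the Kauffman bracket skein relation at this twist produces a linear recursion in $r$ of the form
\[
J\subST(L(1,r)) = \alpha(A)\, J\subST(L(1,r-1)) + \beta(A)\, Q,
\]
where $\alpha(A), \beta(A)$ are scalar Laurent polynomials in the Kauffman variable $A$ and $Q$ is a fixed Laurent polynomial in the generators $z^k\subST$ obtained from the two resolutions of the new crossing. Unrolling this recursion yields a closed form
\[
J\subST(L(1,r)) = \sum_{k=0}^{3} g_k(A,r)\, z^k\subST,
\]
and the key qualitative fact to verify during this computation is that the coefficient $g_3(A,r)$ of the top generator $z^3\subST$ depends nontrivially on $r$.

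Substituting $z^k\subST \mapsto J(C;k)$ and using $J(C;0) = J(U)$ and $J(C;1) = J(C)$, the difference for $r_1 \neq r_2$ becomes
\[
J(Sat(L(1,r_1),C)) - J(Sat(L(1,r_2),C)) = \sum_{k=0}^{3} \bigl[g_k(A,r_1) - g_k(A,r_2)\bigr]\, J(C;k).
\]
I then expect to rearrange this sum so that, after factoring out common scalars, the discriminating combination is a nonzero scalar multiple of $J(C;3) - J(C)\,J(U;3)$. A clean way to isolate this is to compare with the same computation applied to $C = U$, where $Sat(L(1,r),U)$ is just the image of the lasso in $\mathbb{S}^3$; subtracting that reference value forces the $k \le 2$ terms to reassemble into $J(C)\,J(U;3)$, leaving $J(C;3)$ as the one remaining contribution at the top level. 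The hypothesis $J(C;3) \neq J(C)\,J(U;3)$ then gives the conclusion, while $\Delta_C(t)\neq 1$ ensures the statement is nonvacuous by guaranteeing that each satellite in the family carries a genuine nontrivial Alexander polynomial $\Delta_C(t^3)$.

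The principal obstacle is the explicit skein-theoretic computation of $J\subST(L(1,r))$: the compound parameter structure of $L(1,r)$ (compared with the single-parameter family $L(r)$ from the previous theorem) forces one to handle two twist regions, and care is needed in tracking how the fixed ``$1$'' interacts with the varying $r$ so as to pin down the $r$-dependence of $g_3(A,r)$. Once this diagrammatic computation is done, the algebraic identification of the distinguishing factor as $J(C;3) - J(C)\,J(U;3)$ is a routine substitution.
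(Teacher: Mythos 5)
There is a genuine gap, and it starts at the very first sentence: $L(1,r)$ is \emph{not} always a lasso of degree $3$. By Definition \ref{degree}, $\bar r_0=1$, $\bar r_1=(-1)^{1+1}=1$, and $\bar r_2=(-1)^{1+r}$, so $\deg(L(1,r))=3$ when $r$ is odd and $1$ when $r$ is even (this is why the paper's subsection is titled ``lassos of degree 1 or 3''). Consequently the satellites in this family do \emph{not} all share the Alexander polynomial $\Delta_C(t^3)$: when $r_1$ and $r_2$ have different parities, one satellite has Alexander polynomial $\Delta_C(t)$ and the other $\Delta_C(t^3)$. This is precisely where the hypothesis $\Delta_C(t)\neq 1$ does its real work in the paper's proof --- it distinguishes the two satellites in the mixed-parity case --- whereas you relegate it to a ``nonvacuousness'' role, which is a misreading of the statement. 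Your argument as written only addresses (at best) the same-parity case, and even there the mechanism you propose for isolating the discriminating factor --- subtracting the reference computation for $C=U$ so that ``the $k\le 2$ terms reassemble into $J(C)\,J(U;3)$'' --- is not an argument but a hope; nothing in the proposal shows why the lower-order terms should cancel in exactly that way.

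For the same-parity case your overall strategy does match the paper's: one computes $J\subST(L(1,r))=\big(-t^{-1}+(-t)^{-r}(t^{-1}+1)\big)z^1\subST+\frac{1-(-t)^{-r}}{t+1}\,z^3\subST$ (note that only $z^1\subST$ and $z^3\subST$ occur, not all of $z^0\subST,\dots,z^3\subST$ as your ansatz suggests), takes the difference
\[J\subST(L(1,r))-J\subST(L(1,r+2))=\frac{(-1)^r(t-1)}{t^{r+2}}\Big[\tfrac{(t+1)^2}{t}\,z^1\subST-z^3\subST\Big],\]
and observes that after the substitution $z^1\subST\mapsto J(C)$, $z^3\subST\mapsto J(C;3)$ this vanishes (for $|t|>1$) exactly when $J(C;3)=J(C)J(U;3)$. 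Also be aware that your stated ``key qualitative fact'' --- that the coefficient of $z^3\subST$ depends nontrivially on $r$ --- is not sufficient on its own, because after substitution the values $J(C;k)$ satisfy no independence; one needs the \emph{exact} linear combination appearing in the difference, which is what the hypothesis $J(C;3)\neq J(C)J(U;3)$ is tailored to. To repair the proposal you must (i) split into parity cases and use $\Delta_C(t)\neq 1$ for the mixed-parity case, and (ii) actually carry out the difference computation above rather than appeal to a comparison with $C=U$.
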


Some results had already been given regarding knots that have the same Alexander polynomial and different Jones polynomial (\cite{Kan,Kaw}). Yet it is the first time that this kind of result is constructed using proper satellite knots and that a formula for both polynomials is explicitly given.\\

This paper will be organized in four sections. In the first section we will define the concept of lasso and its degree. We will then prove some isotopy equivalences between lassos, and provide the first result regarding satellite knots, which corresponds to the Alexander polynomial of satellite knots using lassos as patterns.

In the next section we will present the Kauffman bracket skein module of a knot in the solid torus, and we  will use it to calculate Jones polynomials using that as basis. We will then proceed to construct in a certain manner diagrams of satellite knots, and we will make explicit formulae for calculating the Kauffman bracket of such diagrams and the Jones polynomial of satellite knots.

In the third section, we will make use of sections 1 and 2 to give rise to the last two theorems above and some other results regarding those families of knots.

Finally, we will proceed in the last section to construct an example going through all the concepts and results presented in this paper.

\subsection*{Acknowledgements}
I would like to thank my advisor, Professor T. Kohno, for his dedication and support to my reseach, keeping me always busy with new goals to look for and achieve. The help of the people around me, specially T. Kitayama, C. Moraga and Y. Nozaki, was also indispensable in the process of giving substance to this master thesis. I would also like to thank professors A. Kawauchi and T. Kanenobu for enlighting me with references and descriptions of results that helped me to better understand the results developed in this paper, and A. Stoimenow for also sharing his ideas with me. Last but not least, thanks also to Y. Zhang for her continuous encouragement and understanding.

\newpage

\section{Lassos, satellite knots and the Alexander polynomial}

This first section will serve as introductory to the concept of \emph{lasso} and its \emph{degree}. In the later part of the section we will also relate the Alexander polynomial of satellite knots using lassos to this \emph{degree}.

\subsection{Lassos}

The following definitions will be considered inside the solid torus $ST \simeq \mathbb{S}^1\times \mathcal{D}^2$, and we will depict them on the annular projection of $ST$. The unknots (trivial knots) that we will mention will be isotopic to $\mathbb{S}^1\times \{0\}$ in $ST$ (unless otherwise specified). 

\begin{defn}\label{simple-lasso}
We call a \emph{simple lasso} $L(r)$ to the $r$-twisted knot sum of two nested unknots, as shown below.\\
\begin{align*}
&\includegraphics[scale=0.3]{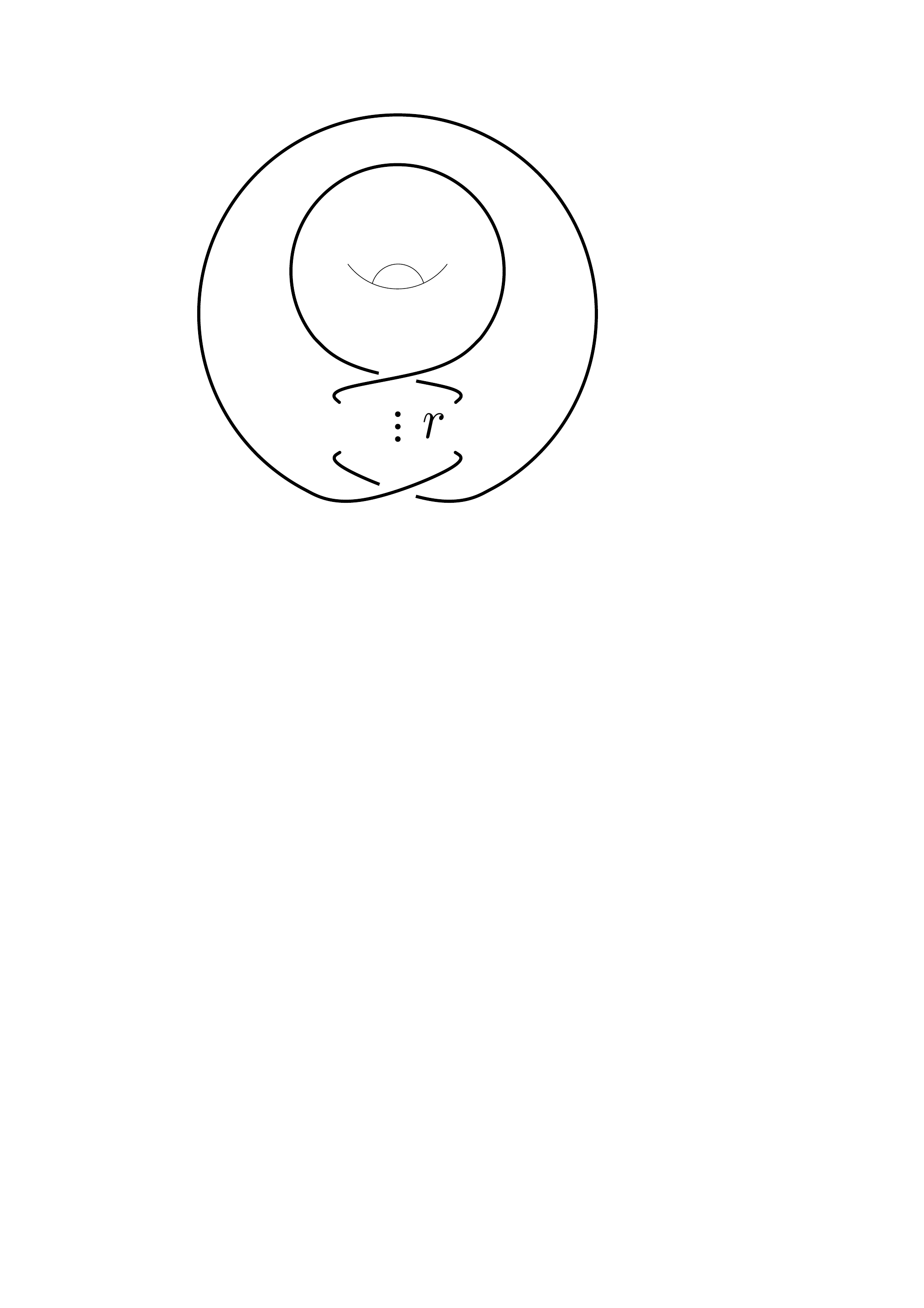}
&&\includegraphics[scale=0.3]{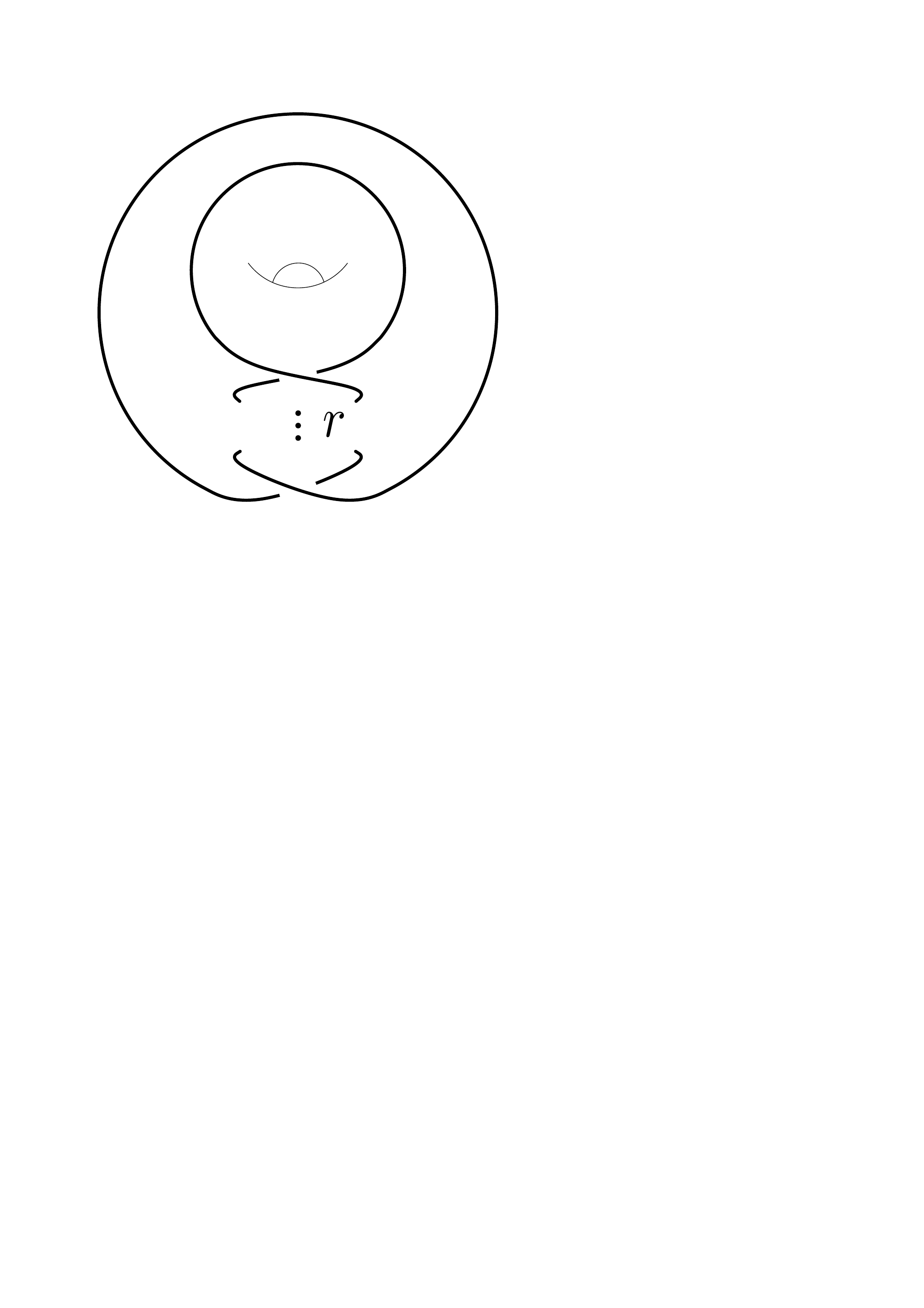}\\
&\spa{11} r>0
&&\spa{11} r<0
\end{align*}
\begin{center}
(the \includegraphics[scale=0.3]{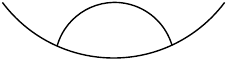} symbol should be understood as the center of $ST$)
\end{center}

Please notice that $r$ can be both positive or negative depending on the rotation direction of the twists, as shown in the pictures. The case $r=0$ will hence represent the standard (untwisted) knot sum.
\begin{align*}
&\includegraphics[scale=0.3, angle=90]{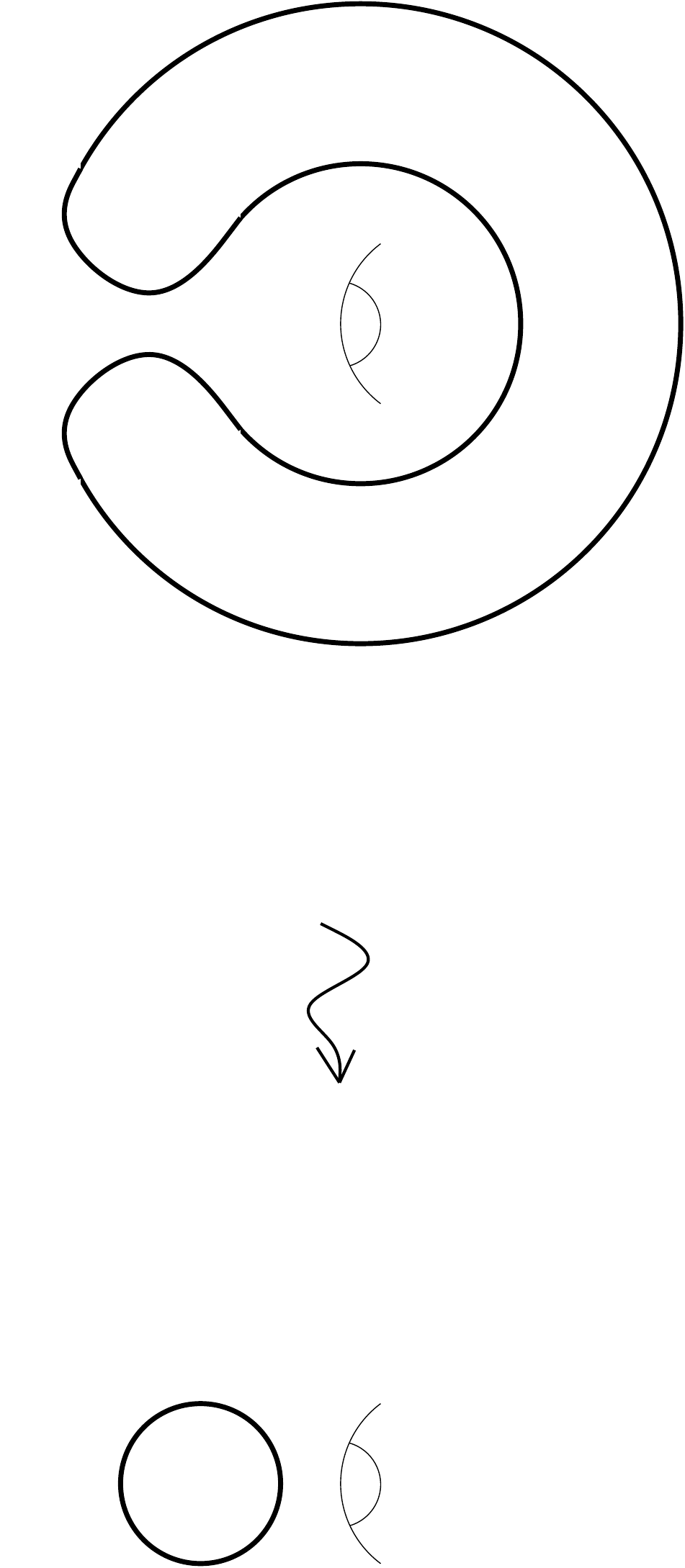}\\
&\spa{12} L(0) \spa{20}\spa{20}\spa{8} L(0)
\end{align*}
\end{defn}

\begin{nrmrk}
Using this notation, a \emph{Whitehead double} can be written as $L(\pm{2})$.\\
\begin{align*}
&\includegraphics[scale=0.3]{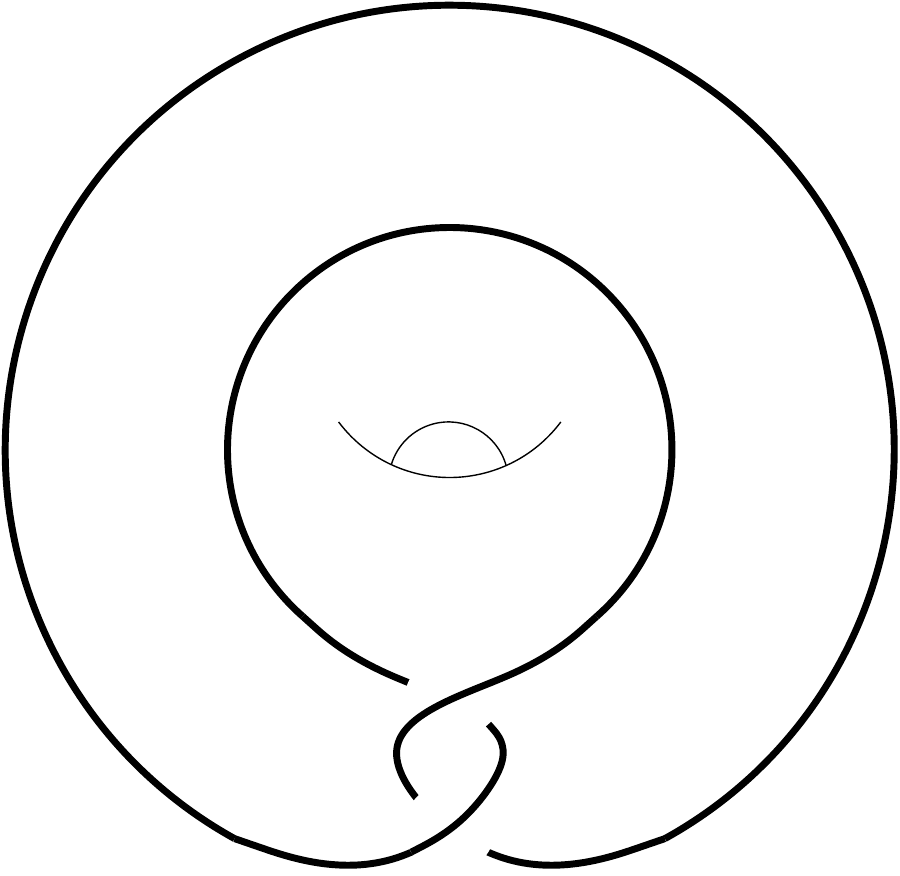}
&&\includegraphics[scale=0.3]{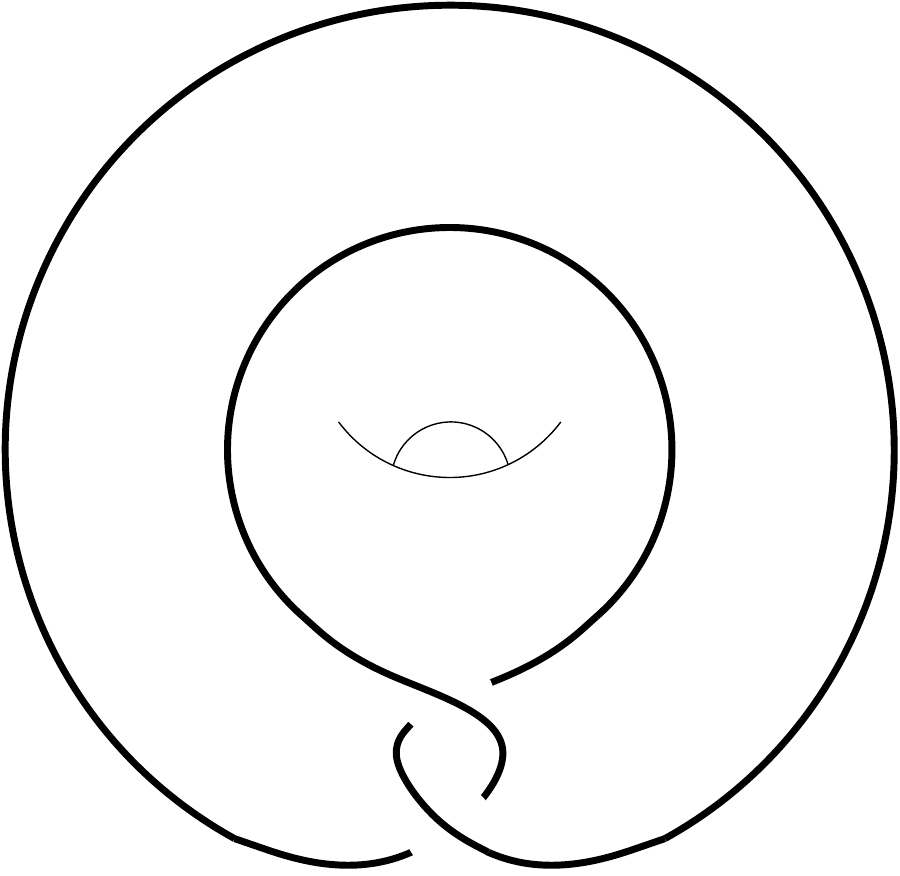}\\
&\spa{11} L(2)
&&\spa{11} L(-2)
\end{align*}
\end{nrmrk}

We now generalize this definition to the following.

\begin{defn}\label{general-lasso}
We call a \emph{lasso} $L(r_1,r_2,...,r_m)$ to the consecutive $r_i$-twisted knot sum of $m+1$ nested unknots. 
\begin{align*}
&\includegraphics[scale=0.3]{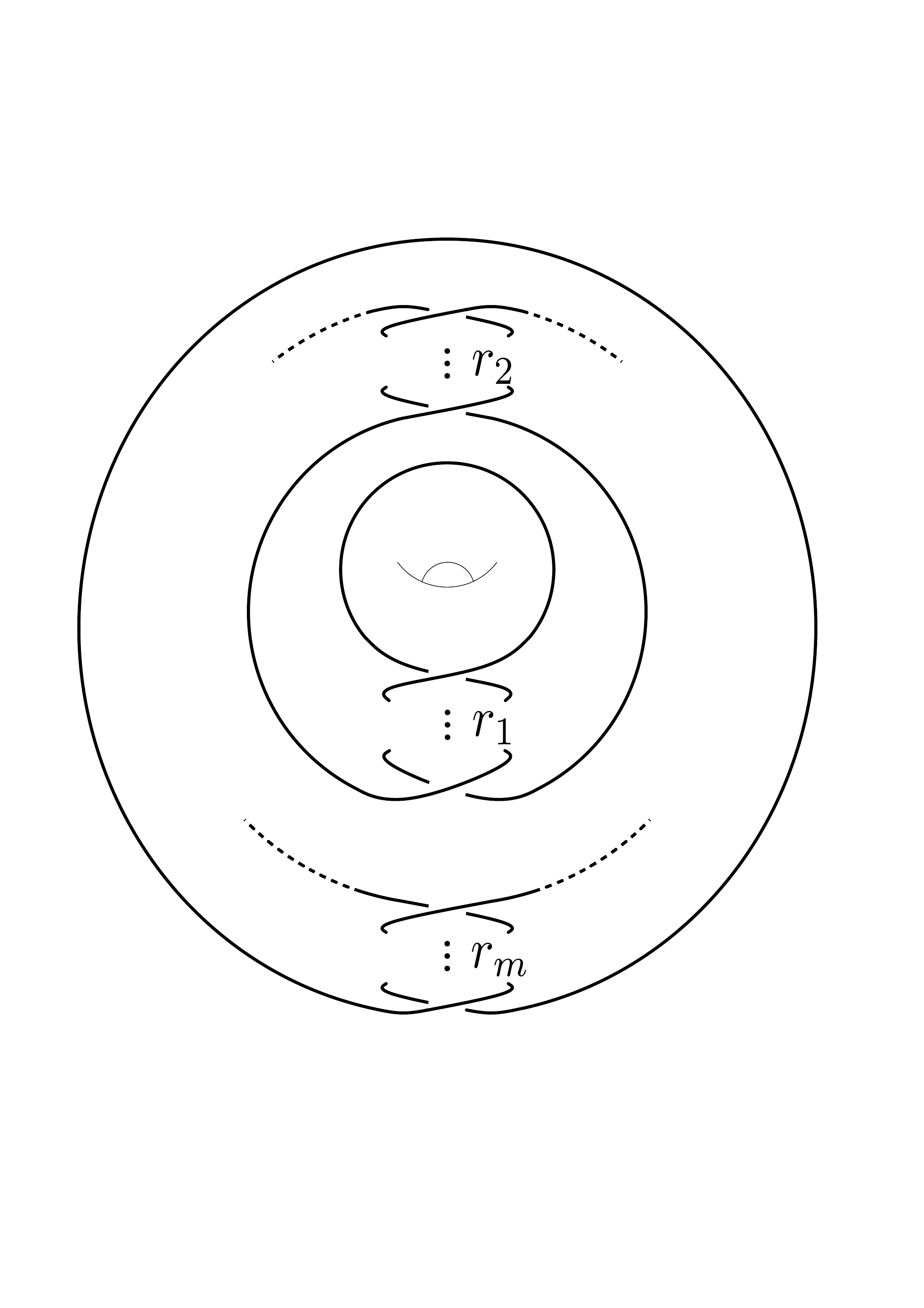}\\
&\spa{14} L(r_1,r_2,...,r_m)
\end{align*}
\end{defn}

These depictions of lassos over the annular projection of $ST$ will be referred to as \emph{normal diagrams}, and will be used throughout this paper. Let us draw some concrete examples of lassos with normal diagrams.
\begin{align*}
&\includegraphics[scale=0.3]{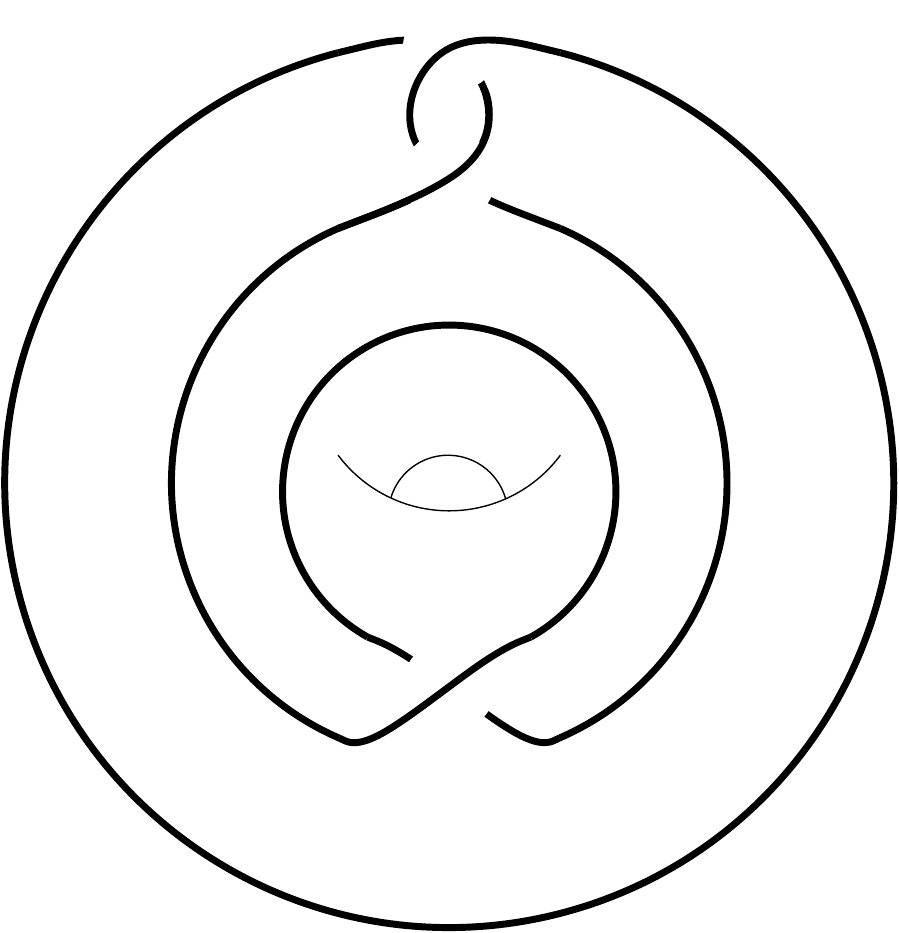}
&&\includegraphics[scale=0.3]{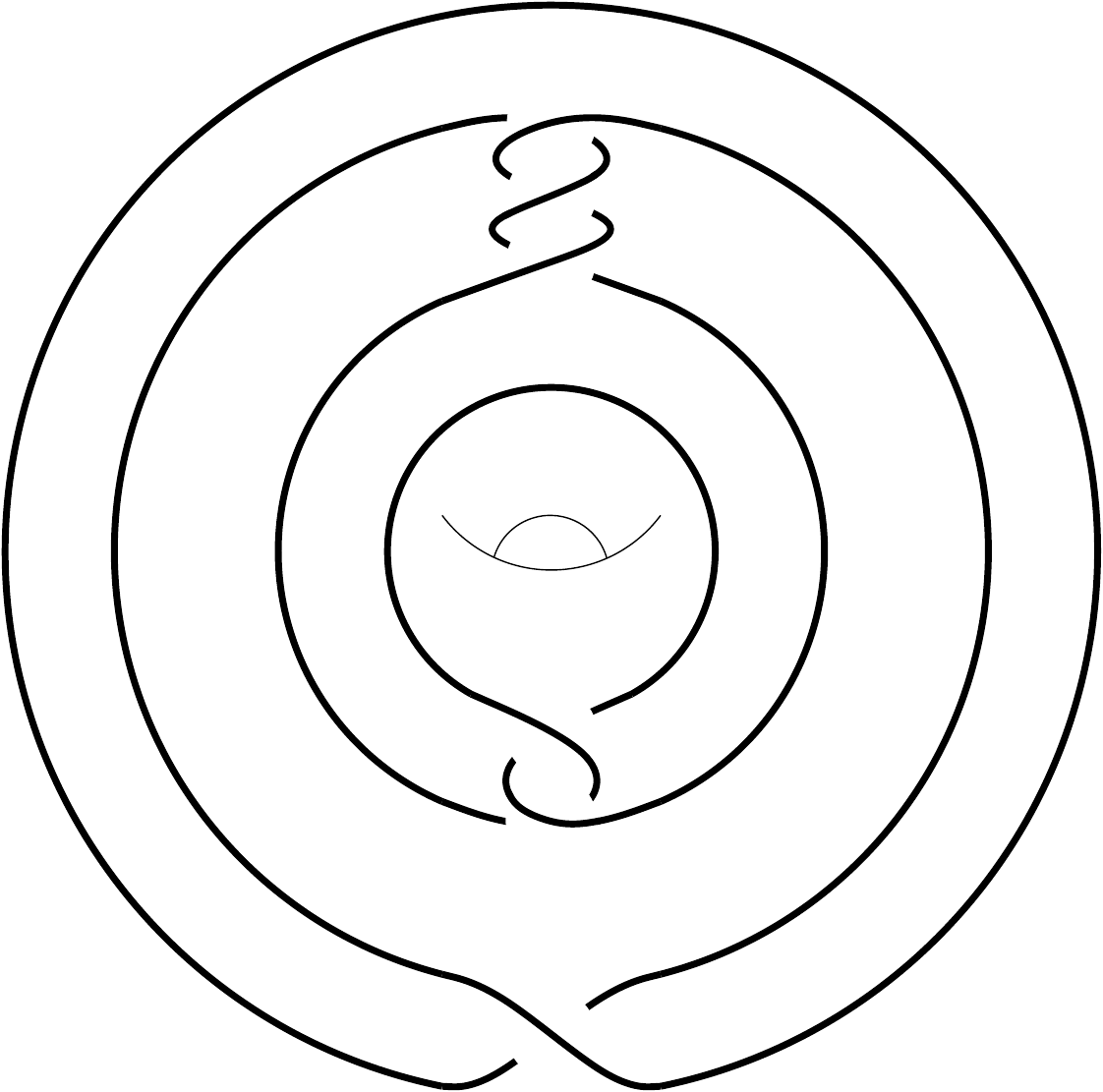}\\
&\spa{9} L(1,2)
&&\spa{8} L(-2,3,-1)
\end{align*}

\begin{nrmrk}
It is convenient to note that, in concordance with the definition of lasso, $L(\emptyset)$ would correspond to only one unknot around the center of the solid torus, and therefore we can depict it as shown under this line.
\begin{align*}
&\includegraphics[scale=0.3]{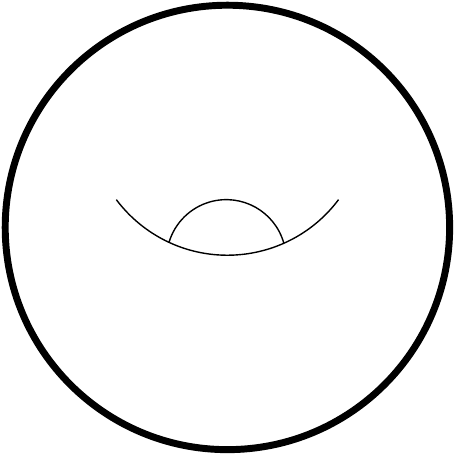}\\
&\spa{4} L(\emptyset)
\end{align*}

However, we will \underline{not} consider any $r_i$ to be $0$, since this would lead to simplifiable cases where no $0$ appears. The applicable simplifications are the following.
\begin{itemize}
\item $L(0,r_2,r_3,...,r_m)\simeq L(r_3,...,r_m)$.
\item $L(r_1,...,r_{i-1},0,r_{i+1},...,r_m)\simeq L(r_1,...,r_{i-1}+r_{i+1},...,r_m)$.
\item $L(r_1,...,r_{m-2},r_{m-1},0)\simeq L(r_1,...,r_{m-2})$.\\

The first two isotopy equivalences are portrayed below.
\end{itemize}
\begin{align*}
&\includegraphics[scale=0.3, angle=90]{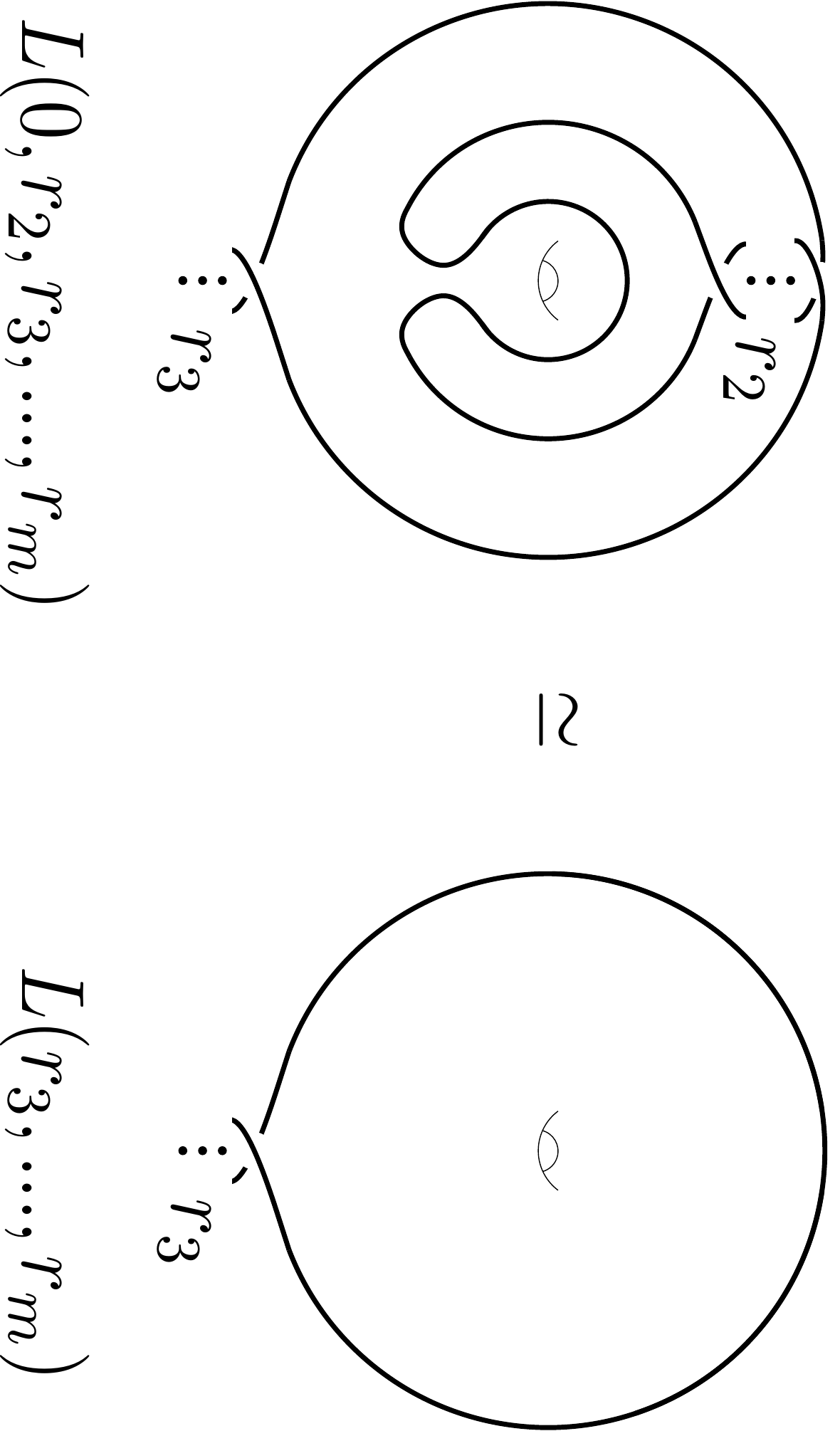}
&&\includegraphics[scale=0.3, angle=90]{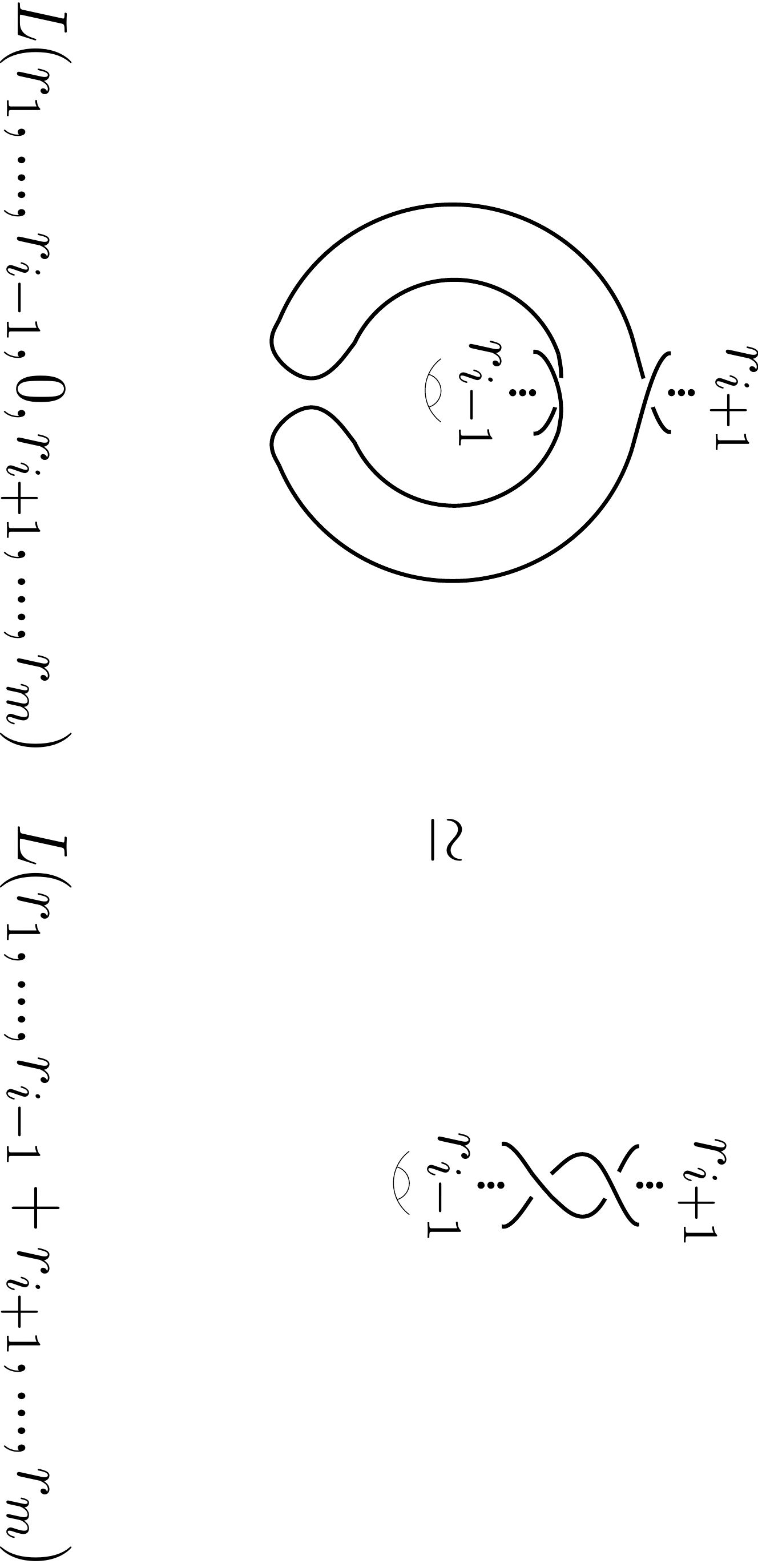}
\end{align*}

The third equivalence is an immediate consequence of the following result.
\end{nrmrk}

\begin{prop}
Let $L(r_1,r_2,...,r_{m-1},r_m)$ be a lasso. Then, we obtain the following isotopy equivalence.
\[L(r_m,r_{m-1},...,r_2,r_1)\simeq L(r_1,r_2,...,r_{m-1},r_m).\]
\end{prop}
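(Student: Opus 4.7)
The plan is to exhibit an explicit ambient isotopy of $ST$ that sends $L(r_1, r_2, \ldots, r_m)$ to $L(r_m, r_{m-1}, \ldots, r_1)$. Writing $ST = S^1 \times D^2$, for $t \in [0, 1]$ I would define
\[
\phi_t(\theta, v) = (\theta, R_{\pi t}(v)),
\]
where $R_\alpha \colon D^2 \to D^2$ denotes rotation by angle $\alpha$ about the center of the disk. Then $\phi_0 = \mathrm{id}$ and $\phi_1$ applies a half-turn to each meridional disk, so $\{\phi_t\}_{t \in [0,1]}$ is an ambient isotopy of $ST$.

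Next, I would place the lasso concretely in $ST$: the $m+1$ nested unknots are parallel copies of the core, situated at points $p_0, p_1, \ldots, p_m$ along a fixed diameter of $D^2$ with $p_0$ farthest from and $p_m$ closest to the center of the disk; all twist regions lie at a single angular coordinate $\theta_0$, and the $r_i$-twist occurs between the parallel copies at $p_{i-1}$ and $p_i$. Under $\phi_1$, the copy at $p_i$ is carried to the copy at $-p_i$. In the annular projection the image of the copy that was $i$-th from the outside now sits as the $i$-th from the inside, so the nesting order is reversed; at the same time, the twist between the images of $p_{i-1}$ and $p_i$ is still an $r_i$-twist, because $\phi_1$ is orientation-preserving and thus preserves the handedness of every crossing.

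Relabeling the nested circles in the image from outer to inner as $C_0', C_1', \ldots, C_m'$ (so that $C_k'$ is the image of $p_{m-k}$), the former $r_i$-twist now connects $C_{m-i+1}'$ and $C_{m-i}'$. Setting $j = m-i+1$, the $j$-th twist from the outside in the image lasso is $r_{m-j+1}$, so the image is precisely the normal diagram of $L(r_m, r_{m-1}, \ldots, r_1)$.

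The main point I expect to have to justify carefully is that $\phi_1$ preserves the signs of the twist parameters $r_i$ rather than negating them. This reduces to the observation that a rotation of $D^2$ lies in $SO(2)$ and hence preserves the rotational sense of local twists, so every crossing in every twist region keeps its sign. The rest of the argument is a direct verification of what $\phi_1$ does to the normal diagram.
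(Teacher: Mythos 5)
Your proof is correct, but it takes a genuinely different route from the paper's. The paper argues diagrammatically and iteratively: each twist region is flipped about its own horizontal axis and then slid past the remaining ones to the outermost position, one at a time, much as in the standard proof that connected sum is commutative; the sequence of pictures carries the burden of showing that nothing tangles along the way. You instead exhibit a single explicit ambient isotopy of $ST$ --- the simultaneous half-turn of every meridional disk --- and check what it does to one fixed normal position of the lasso. This buys brevity and rigour: there is no induction over twist regions and no bookkeeping of intermediate diagrams, only a one-time verification of the image of a standard configuration. The step you rightly flag as delicate is that the half-turn preserves the sign of each $r_i$, and your stated reason (rotations lie in $SO(2)$) is a little too quick, since the twisting of a band is measured in the plane normal to its core, not in the meridional disk on which the rotation acts. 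The clean way to close this is to note that $R_\pi$ acts on the diagrammatic data by negating both the radial coordinate (a mirror reflection of the annular projection) and the height coordinate (an over/under swap at every crossing); each of these alone reverses the handedness of a crossing, so their composition preserves it, as a direct check on a single crossing of a twist region confirms. With that observation supplied, your argument is complete and, if anything, easier to make airtight than the paper's picture sequence.
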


\begin{proof}
To simplify drawings, let us represent with boxes where sets of crossings should be.
\[\includegraphics[scale=0.7]{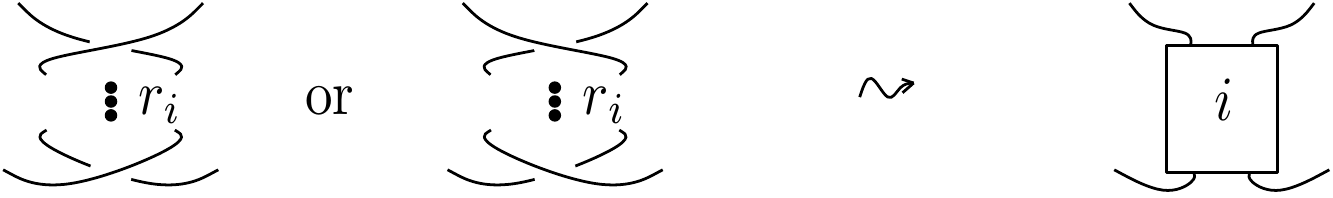}\]

We can draw now our initial lasso $L(r_1,r_2,...,r_m)$ as follows.
\[\includegraphics[scale=0.3]{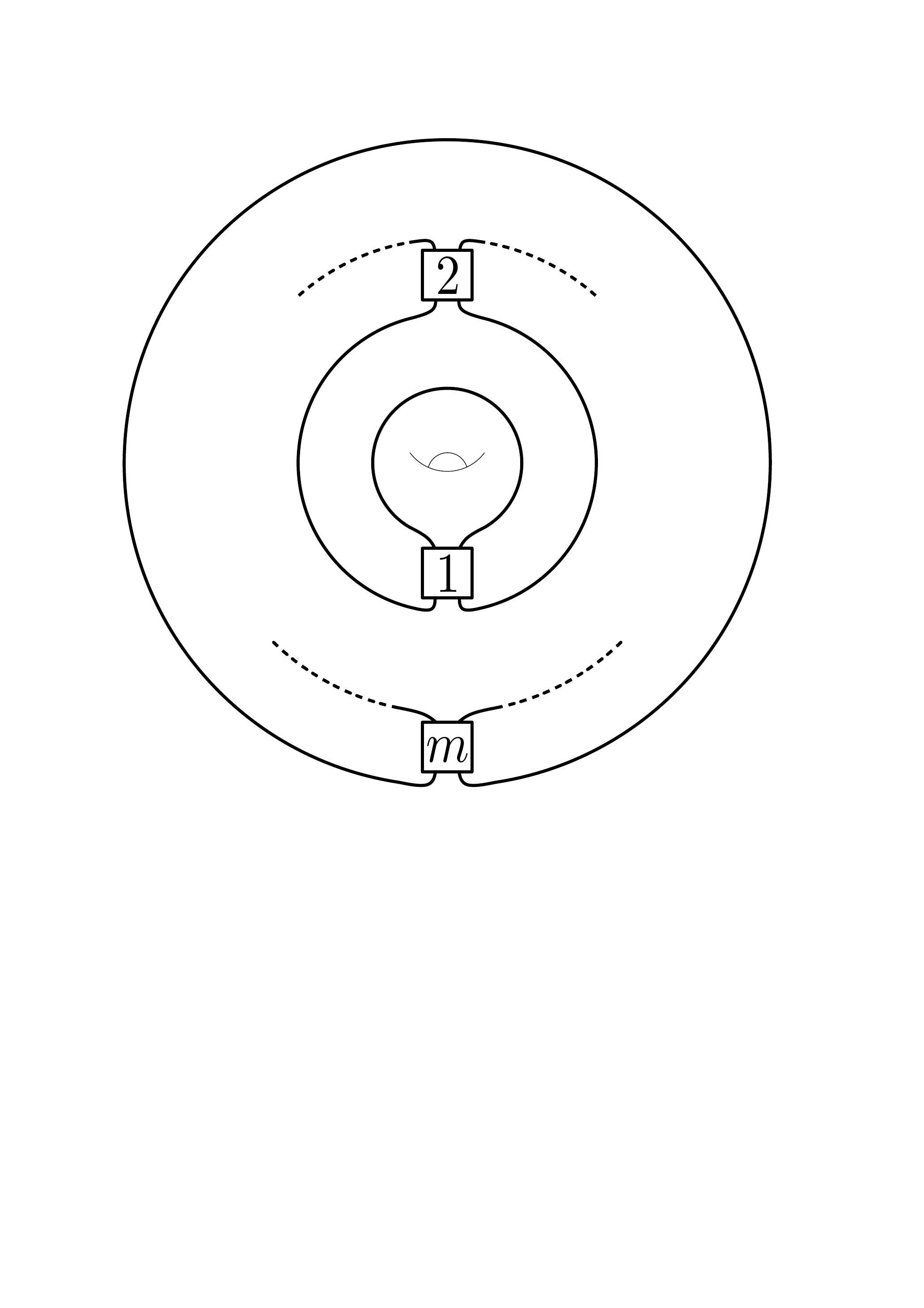}\]

In order to get to $L(r_m,...,r_2,r_1)$ isotopically, we start by flipping the first crossing set around its own horizontal axis, as shown in the first step of the steps sequence. Observe that the piece labelled with $1$ is reversed. Please also notice that flipping a crossing set does not change the number neither the orientation of the crossings inside. Then, we move the flipped crossing set over all the odd-numbered labels to the outmost part, as represented in the second step. In the third image, we have flipped the second crossing set so that the arcs between \raisebox{\depth}{\scalebox{1}[-1]{$1$}} and \raisebox{\depth}{\scalebox{1}[-1]{$2$}} appear above the others. Then, push \raisebox{\depth}{\scalebox{1}[-1]{$2$}} over all the even-numbered labels to the outmost part, but confined by the outer boundary coming from \raisebox{\depth}{\scalebox{1}[-1]{$1$}}. As we see, this process allows us to move the inner crossing sets to the exterior in an orderly fashion, taking care not to tangle what we already have. In the next picture, we see the last flip that has to be performed. Only $m$ remains to be flipped. After the flip, we would have completely reversed the lasso $L(r_1,r_2,...,r_{m-1},r_m)$. Recalling that flipping a crossing set does not change the orientation of the crossings in it, the resulting figure is $L(r_m,r_{m-1},...,r_2,r_1)$.

\begin{align*}
&\spa{12}\includegraphics[scale=0.3]{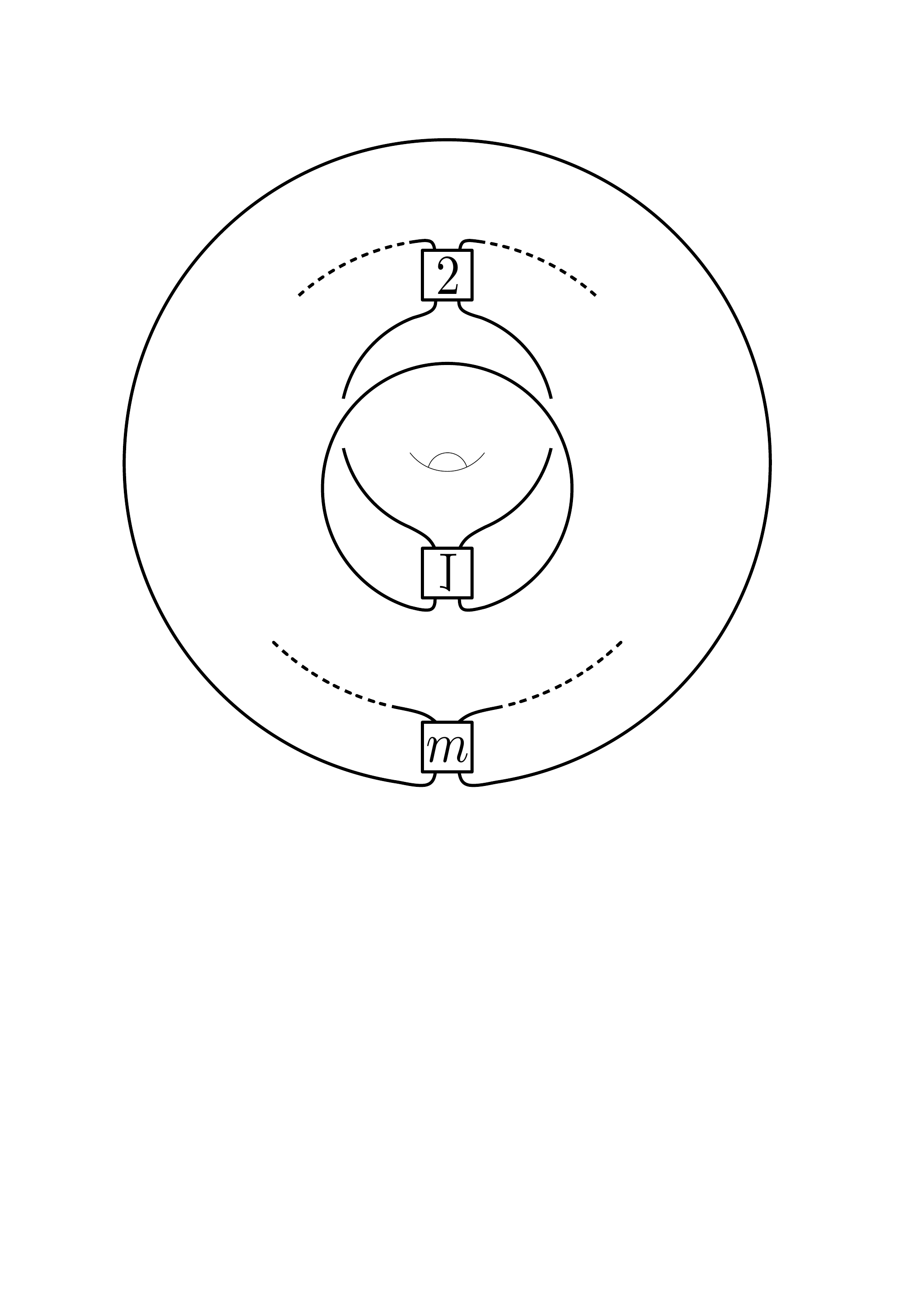}&&\raisebox{5.5 em}{\includegraphics[scale=0.5, angle=90]{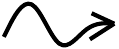}} &&\raisebox{-2.8 em}{\includegraphics[scale=0.6]{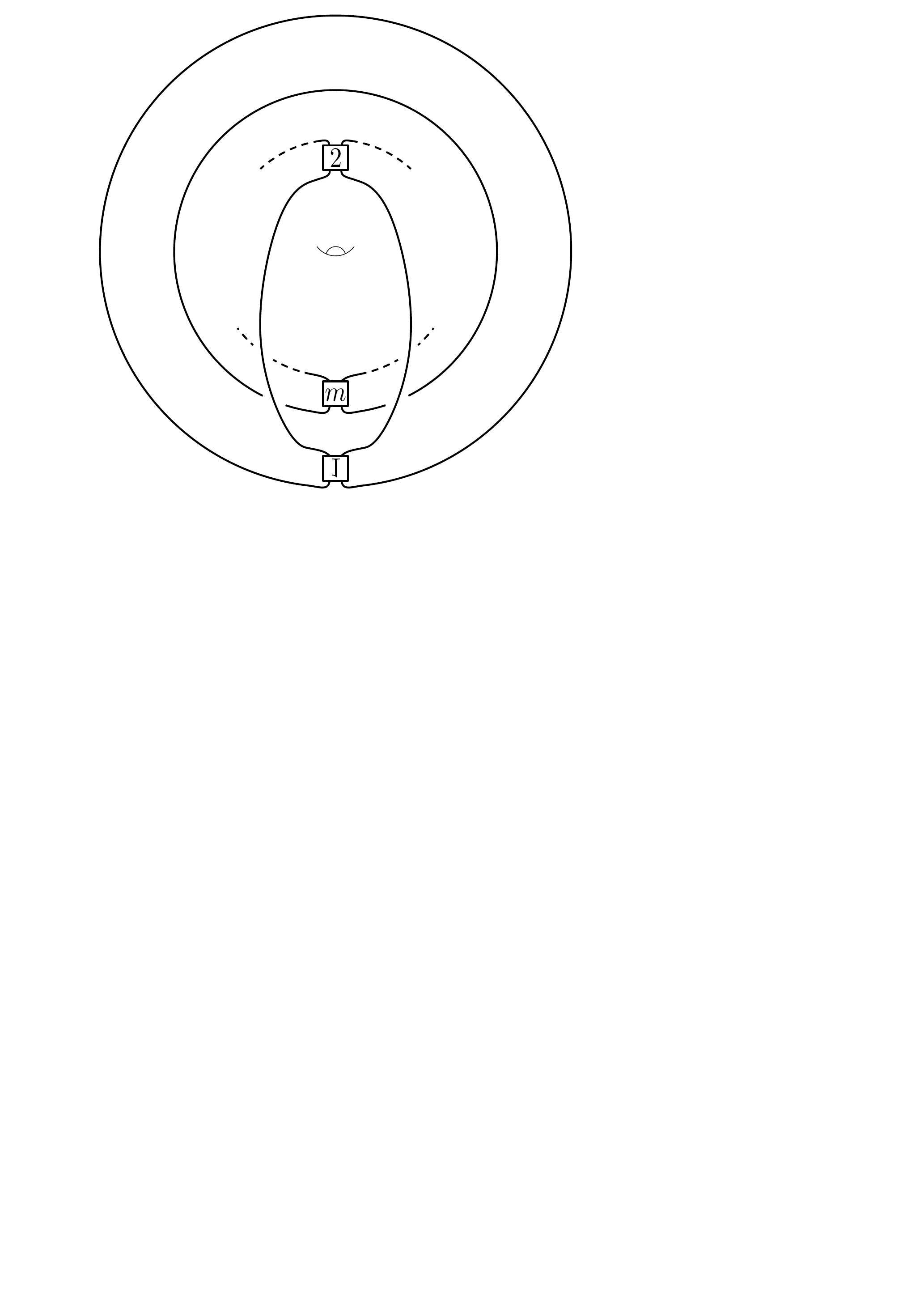}}\\
&&&\includegraphics[scale=0.5, angle=315]{Arrow_down}&&\\
&\includegraphics[scale=0.6]{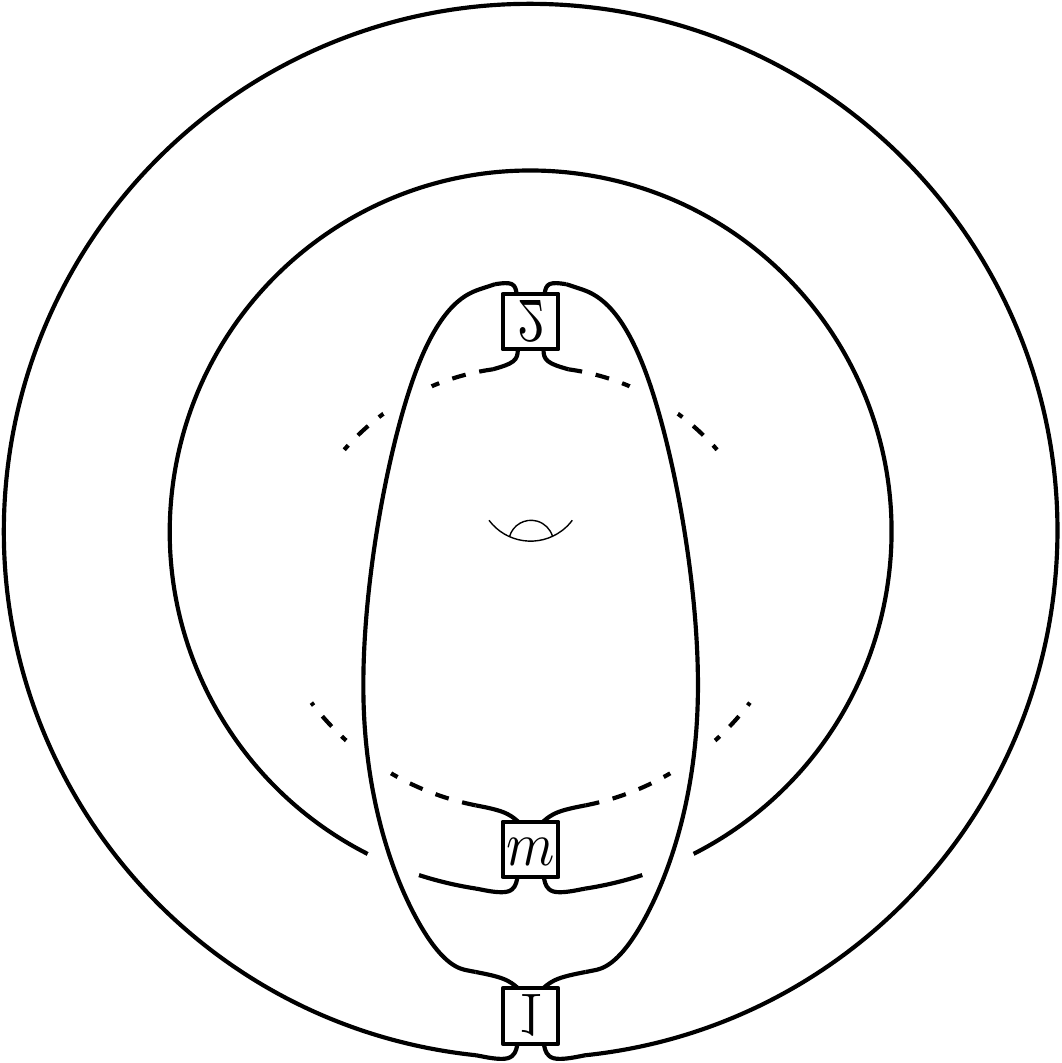} &&\raisebox{8 em}{\includegraphics[scale=0.5, angle=90]{Arrow_down}}&&\includegraphics[scale=0.6]{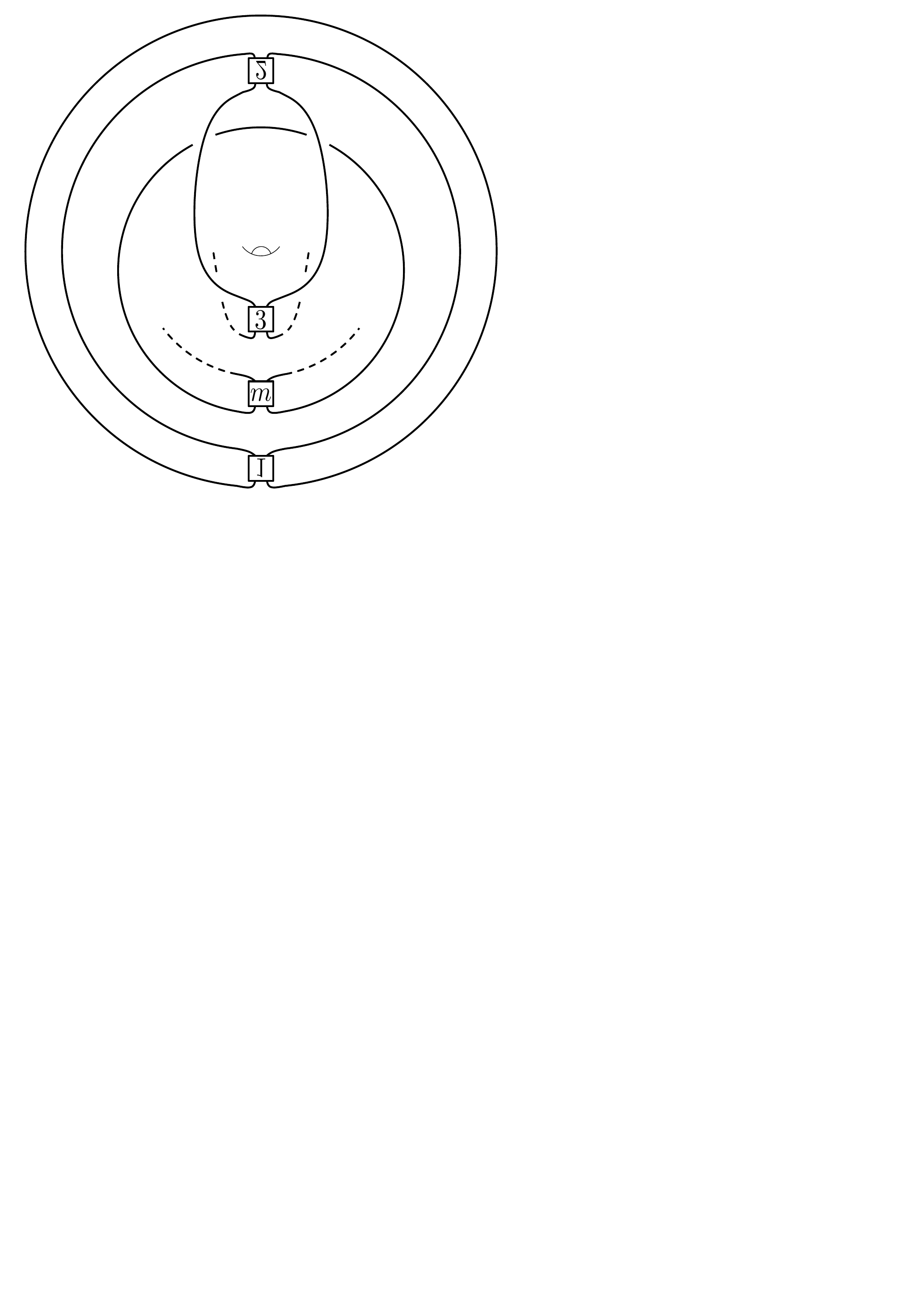}\\
&&&&&\spa{10}\includegraphics[scale=0.5, angle=315]{Arrow_down}\\
\end{align*}
\[\includegraphics[scale=0.6]{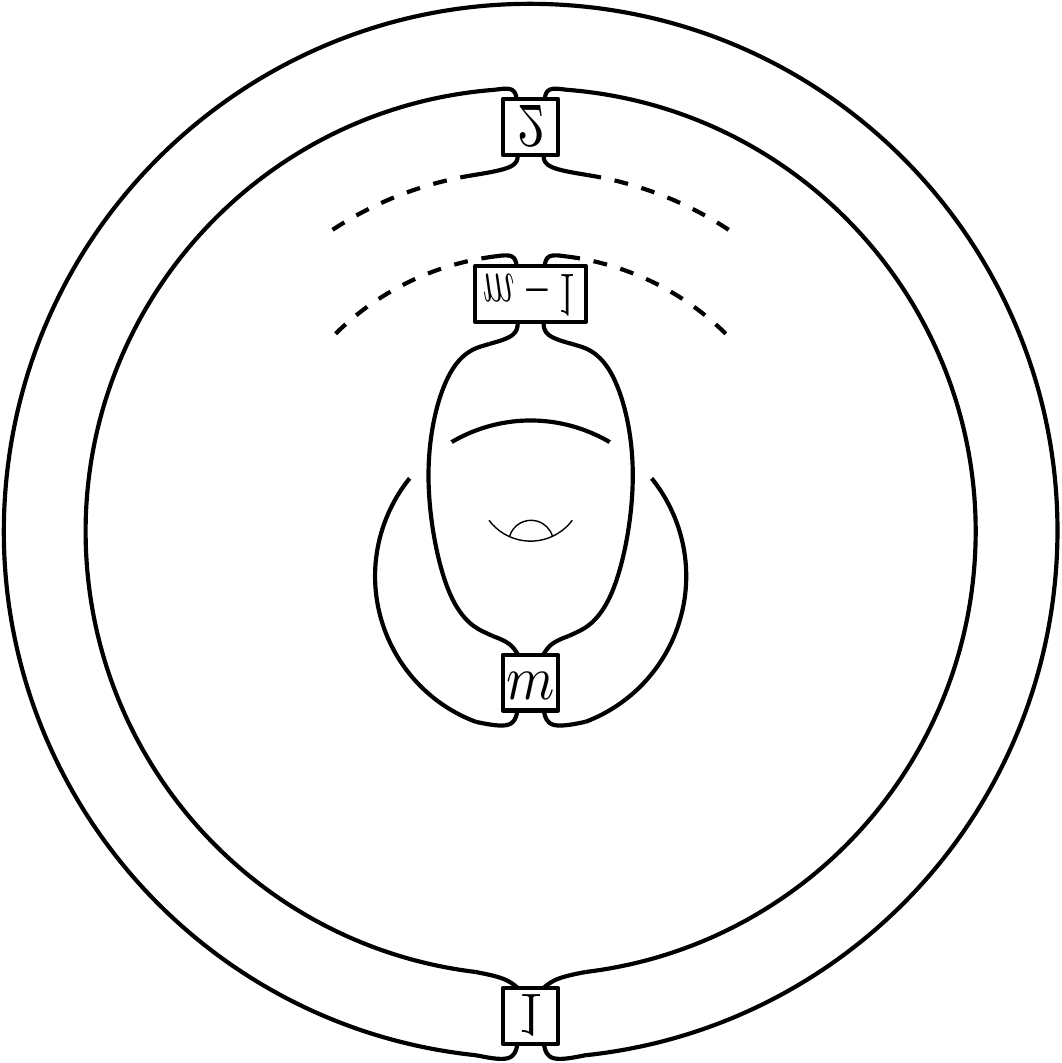}\]
\[\includegraphics[scale=0.3]{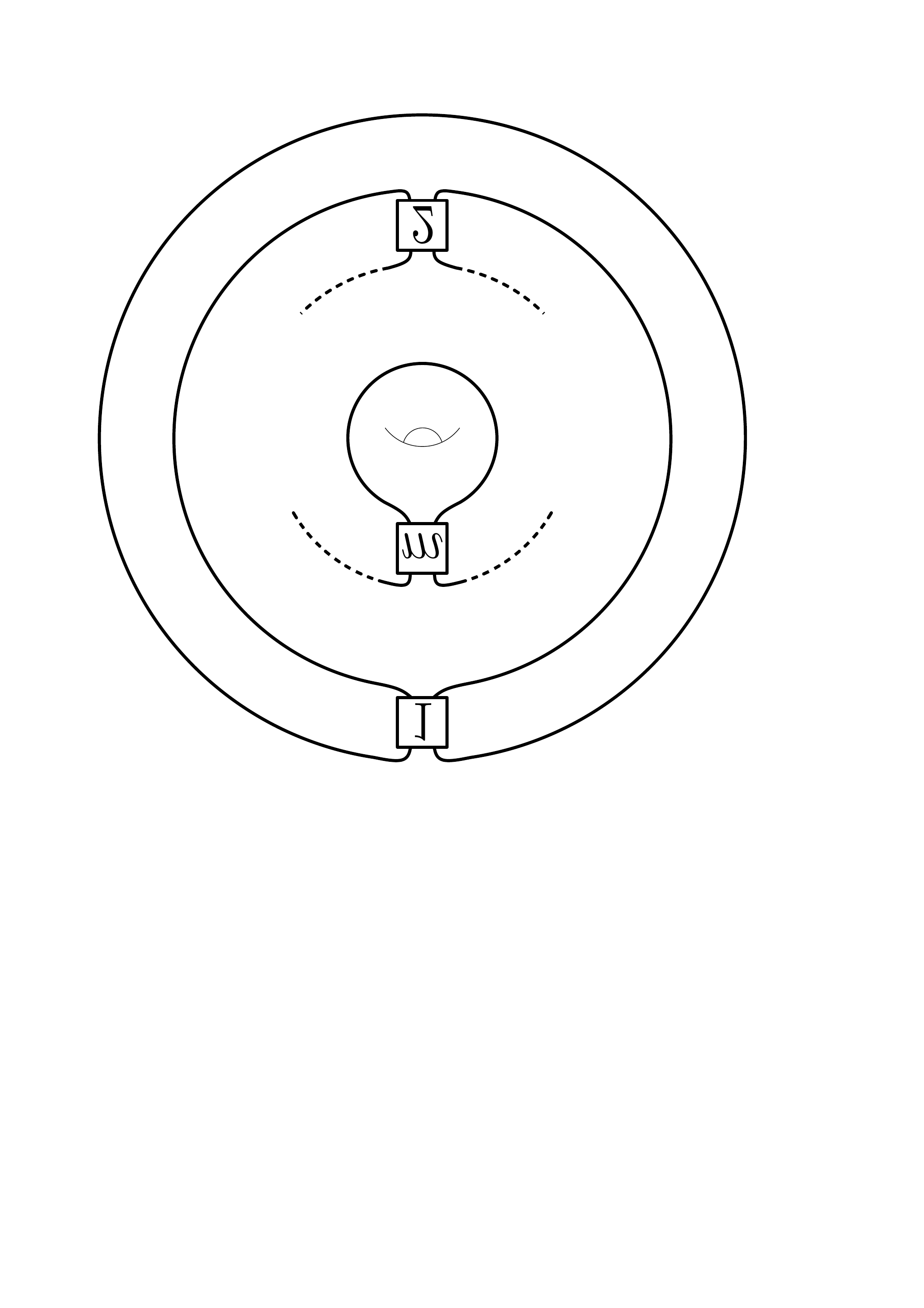}\qquad\quad\includegraphics[scale=0.3]{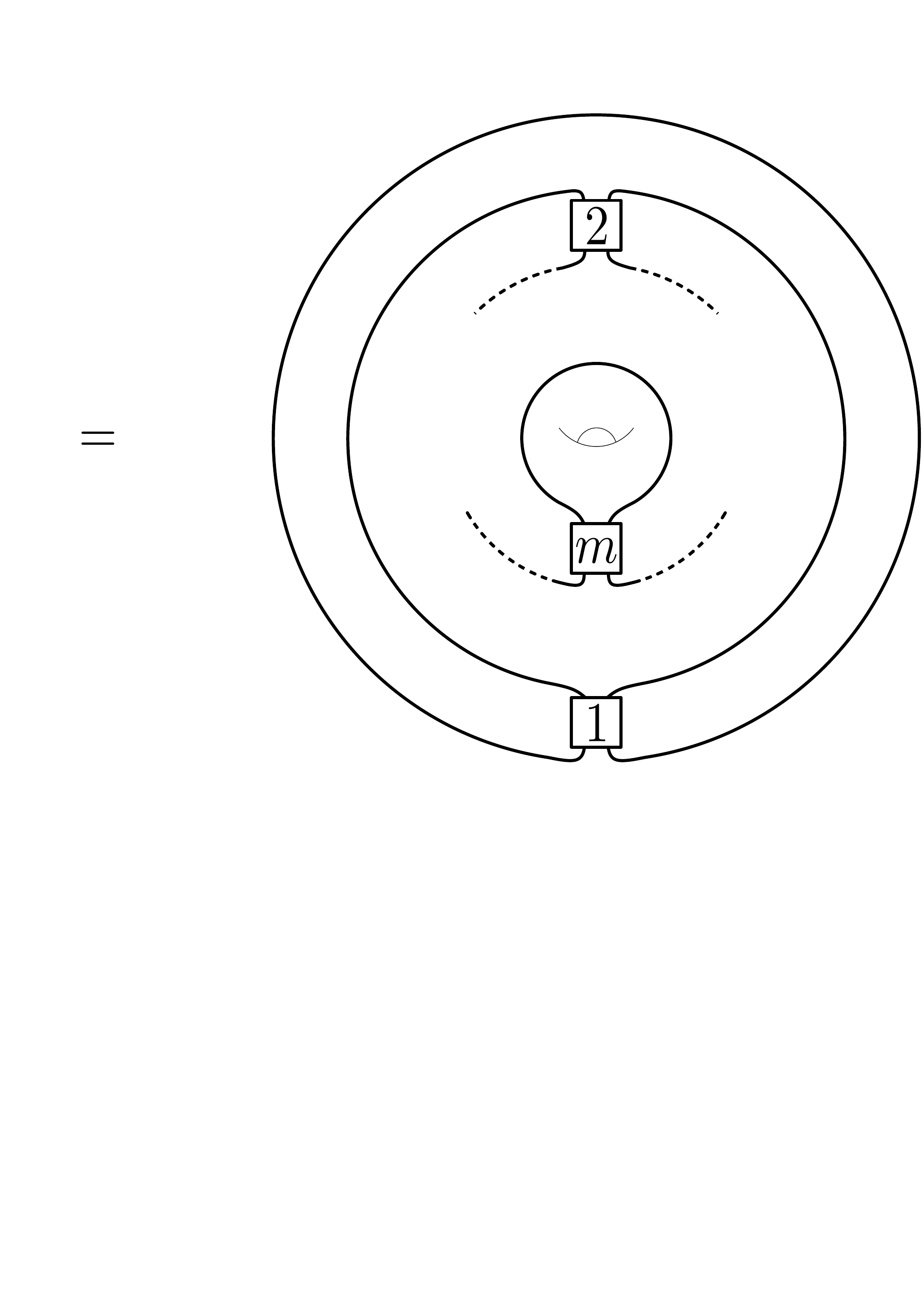}\]
\end{proof}

\begin{defn}\label{degree}
Let $L(r_1, r_2,...,r_m)$ be a lasso. Let us consider the $(m+1)$-tuple $(\bar{r}_0,\bar{r}_1,...,\bar{r}_m)$, whose values are taken according to the next formulae.
\begin{align*}
&\bar{r}_0 = 1,\\
&\bar{r}_i = \left\{ 
\begin{array}{ll}
	(-1)^{1+r_i} & \textrm{if $\bar{r}_{i-1}=1$}\\
	1 & \textrm{if $\bar{r}_{i-1}= -1$}
\end{array}
\spa{10}i\in {\{1,...,m\}}.\right.
\end{align*}

Then, we define the \emph{degree} of $L(r_1,r_2,...,r_m)$ as the sum of the values of the $(m+1)$-tuple.
\[\deg\big(L(r_1,r_2,...,r_m)\big)=\sum\limits_{i=0}^m \bar{r}_i.\]
\end{defn}

\begin{defn}\label{sets}
We define $\mathscr{L}$ and $\mathscr{L}_d$ to be the following sets.
\[\mathscr{L} = \{L\in Emb(\mathbb{S}^1,ST)\;|\;L\textnormal{ is a lasso}\},\]
\[\mathscr{L}_n = \{L\in\mathscr{L}\;|\;\deg(L)=n\}.\]
\end{defn}

Here are some elementary observations:
\begin{enumerate}
\item If $m$ is odd, the degree of a lasso $L(r_1,r_2,...,r_m)$ is always non-negative and at most $m+1$, namely, we have $0\leq \deg\big(L(r_1,r_2,...,r_m)\big)\leq m+1$. This follows from the definition of degree. On the other hand, if $m$ is even, then it forces the degree to be strictly positive, which translates as $1\leq \deg\big(L(r_1,r_2,...,r_m)\big)\leq m+1$. In this case the degree is strictly positive since $\bar{r}_0$ is positive and for the remaining $m$-tuple the first inequality applies.

\item The writhe of an oriented lasso is given by $wr\big(L(r_1,r_2,...,r_m)\big)=-\sum\limits^m_{i=1}r_i$.
\end{enumerate}

\subsection{Satellite knots and the Alexander polynomial}

We will now recall the definitions of framing and satellite knot and bring up a theorem that relates the Conway-normalized Alexander polynomial of a satellite knot with the one of its components (pattern and companion). Then, the result will be specified for satellites where the patterns are lassos.

\begin{defn}\label{framing}
A \emph{framed link} in an oriented 3-manifold $Y$ is a disjoint union of embedded circle, equipped with a non-zero normal vector field. Framed links are considered up to isotopy.
\end{defn}

\begin{defn}\label{sat-def}
Let $P$ be a knot in $ST$. We will call this knot \emph{pattern}. Let $C$ be a knot in $\mathbb{S}^3$ with framing $0$. We will call it \emph{companion}. Finally, let $e:ST\hookrightarrow\mathbb{S}^3$ be an embedding of $ST$ onto a regular neighbourhood of $C$, that maps a longitude of $ST$ onto a longitude of $C$. Then $eP$ is the \emph{satellite knot} of $P$ and $C$, hereinafter $Sat(P,C)$.
\end{defn}

\begin{nrmrk}
Considering $C$ to have framing $0$ is the same as taking it with blackboard framing ---the vector field lies on the plane where the diagram of the knot is represented--- and then adding $-wr(C)$ full-twists to the parallel knot that arises after we embed $P$ into a neighbourhood of $C$. Here $wr(C)$ means the writhe of $C$. For this construction we say that $P$ is sent \emph{faithfully}.
\end{nrmrk}

\begin{thm}[\cite{Li}]\label{lick-thm} Let $P$ be a knot in $ST$, $C$ be a knot in $\mathbb{S}^3$ and $Sat(P,C)$ be their satellite knot. Then,
\[\Delta_{Sat(P,C)}(t)=\Delta_P(t)\Delta_C(t^n),\]
where $P$ represents $n$ times a generator of $H_1(ST)$.
\end{thm}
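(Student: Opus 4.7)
The plan is to compute $\Delta_K(t)$ for $K := Sat(P,C)$ by lifting the satellite decomposition of $X_K := \mathbb{S}^3 \setminus N(K)$ to the infinite cyclic cover and applying Mayer--Vietoris. The satellite construction gives $X_K = X_C \cup_T X_P$, where $X_C := \mathbb{S}^3 \setminus N(C)$, $X_P := ST \setminus N(P)$, and $T := \partial N(C)$ is the common torus boundary.

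The first step is to understand how the infinite cyclic cover $p:\tilde X_K \to X_K$, determined by $\phi:H_1(X_K)\to\mathbb{Z}$ sending the meridian of $K$ to $1$, restricts to each piece. Because the embedding identifies the meridian of $P$ with the meridian of $K$, the restriction $\phi|_{H_1(X_P)}$ is surjective, so $p^{-1}(X_P)$ is a connected cover $\tilde X_P$ of $X_P$. By contrast, $\mu_C$ has linking number $n$ with $K$ (since $[P]=n$ in $H_1(ST)$), whence $\phi(\mu_C)=n$ and $p^{-1}(X_C)$ is a disjoint union of $n$ copies of the infinite cyclic cover $\tilde X_C$ of $X_C$, cyclically permuted by the generator $t$ of the deck group, with $t^n$ acting on each copy as the standard deck transformation. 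Correspondingly, $p^{-1}(T)$ is a disjoint union of $n$ cylinders $\mathbb{R}\times S^1$.

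Next I would apply Mayer--Vietoris to $\tilde X_K = p^{-1}(X_C) \cup p^{-1}(X_P)$ over $\mathbb{Z}[t^{\pm 1}]$, extracting orders via their multiplicativity along exact sequences. The crucial identifications are: $\mathrm{ord}\,H_1(p^{-1}(X_C))=\Delta_C(t^n)$, since the module $H_1(\tilde X_C)^{\oplus n}$ is obtained from the Alexander module of $C$ by the ring extension $s\mapsto t^n$, so that its presentation matrix over $\mathbb{Z}[t^{\pm 1}]$ has determinant $\Delta_C(t^n)$; and $\mathrm{ord}\,H_1(\tilde X_P) = (t^n-1)\Delta_P(t)$, which follows from a parallel Mayer--Vietoris applied to $\mathbb{S}^3\setminus N(P) = X_P \cup_T ST'$, noting that the outer solid torus $ST'$ lifts to $n$ contractible components. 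Combining these with $\mathrm{ord}\,H_i(p^{-1}(T))=t^n-1$ for $i=0,1$, together with the connectivity contributions $\mathrm{ord}\,H_0(p^{-1}(X_C))=t^n-1$, $\mathrm{ord}\,H_0(p^{-1}(X_P))=t-1$, $\mathrm{ord}\,H_0(\tilde X_K)=t-1$, the alternating product of orders collapses to
\[\Delta_K(t) \;=\; \frac{(t^n-1)\Delta_P(t)\cdot\Delta_C(t^n)}{t^n-1} \;=\; \Delta_P(t)\,\Delta_C(t^n).\]

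The hard part is the bookkeeping: one must track the factors of $t^n-1$ and $t-1$ arising from the torus boundary and from the zeroth homologies of each piece, verify that $H_2$ of the relevant covers is sufficiently trivial for the alternating-product identity to hold, and --- most subtly --- correctly identify $\mathrm{ord}\,H_1(\tilde X_P)$ as $(t^n-1)\Delta_P(t)$ rather than $\Delta_P(t)$, since $\tilde X_P$ is the restriction of $\tilde X_K$ to $X_P$ and is \emph{not} the infinite cyclic cover of the knot complement of $P$ in $\mathbb{S}^3$; the two differ by exactly this factor, as the auxiliary Mayer--Vietoris reveals. Finally one must verify that Conway's normalization is preserved throughout so that the identity holds as a bona fide equality rather than merely up to units in $\mathbb{Z}[t^{\pm 1}]$.
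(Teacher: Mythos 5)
The paper offers no proof of this statement: it is quoted directly from Lickorish's book \cite{Li}, so there is no internal argument to compare yours against. Your sketch is, in outline, exactly the classical proof found in that reference --- decompose $X_K=X_C\cup_T X_P$, lift to the infinite cyclic cover, and run Mayer--Vietoris over $\mathbb{Z}[t^{\pm1}]$ --- and the key identifications are correct for $n\geq 1$: $p^{-1}(X_C)$ is $n$ copies of $\tilde X_C$ cyclically permuted by $t$, whose first homology has order $\Delta_C(t^n)$ after the ring extension $s\mapsto t^n$; and the auxiliary Mayer--Vietoris giving $\mathrm{ord}\,H_1(\tilde X_P)=(t^n-1)\Delta_P(t)$ is indeed the subtle point, correctly identified. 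The alternating product then collapses as you say. The normalization issue you flag at the end also resolves cleanly: both sides of the identity are symmetric under $t\mapsto t^{-1}$ and evaluate to $1$ at $t=1$, which pins down the ambiguous unit.

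The one genuine gap is the case $n=0$, which this paper actually needs (lassos of degree $0$, e.g.\ the Whitehead doubles $L(\pm 2)$ and all $L(r)$ with $r$ even, are used as patterns in Section 3). When $n=0$ the restriction of $\phi$ to $H_1(X_C)$ is trivial, so $p^{-1}(X_C)$ is not ``$0$ copies of $\tilde X_C$'' but $\mathbb{Z}$-many copies of $X_C$ itself; every factor $t^n-1$ in your bookkeeping vanishes, the modules $H_0(p^{-1}(T))$ and $H_1(p^{-1}(T))$ are no longer $\mathbb{Z}[t^{\pm1}]$-torsion, the multiplicativity of orders along the exact sequence cannot be invoked, and your final quotient $\bigl((t^n-1)\Delta_P(t)\Delta_C(t^n)\bigr)/(t^n-1)$ is literally $0/0$. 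You need a separate (easy) argument for $n=0$ showing $\Delta_K=\Delta_P$, which is consistent with the convention $\Delta_C(t^0)=\Delta_C(1)=1$. Beyond that, the steps you defer as ``bookkeeping'' --- vanishing of $H_2$ of the infinite cyclic cover of a knot complement, and multiplicativity of order ideals for finitely generated torsion modules over $\mathbb{Z}[t^{\pm1}]$ --- are standard but would need to be cited or proved in a complete write-up.
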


We will use this result and prove the following statement.

\begin{prop}\label{alex}
Let $L\in \mathscr{L}_d$, $C$ be a knot in $\mathbb{S}^3$ and $Sat(L,C)$ be their satellite knot. Then,
\[\Delta_{Sat(L,C)}(t)=\Delta_C(t^d).\]
\end{prop}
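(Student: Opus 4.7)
The plan is to derive the result by invoking Theorem \ref{lick-thm} and computing the two ingredients it requires, namely $\Delta_L(t)$ and the winding number $n$ of $L$ in $ST$. From Theorem \ref{lick-thm} we have $\Delta_{Sat(L,C)}(t) = \Delta_L(t)\,\Delta_C(t^n)$, so the proposition reduces to establishing (a) $\Delta_L(t) = 1$ and (b) $n = \pm d$, where $d = \deg(L)$. Combined with the palindromic symmetry $\Delta_C(t) = \Delta_C(t^{-1})$ of the Conway-normalized Alexander polynomial, these two facts give $\Delta_{Sat(L,C)}(t) = \Delta_C(t^{|n|}) = \Delta_C(t^d)$, as desired.

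For (a), I would show that any lasso, regarded as a knot in $\mathbb{S}^3$ via the standard unknotted embedding of $ST$, is trivial. The idea is that each nested unknot $U_i$ bounds a disk in $\mathbb{S}^3$, and rotating the unknots about their bounding disks lets us absorb the half-twists in the bands into the (topologically irrelevant) framing of the bounding disks. Formally, I would induct on $m$ to show that $L(r_1,\ldots,r_m)$ is ambient-isotopic in $\mathbb{S}^3$ to $L(0,\ldots,0)$, the iterated untwisted connect sum of $m+1$ unknots, which is itself the unknot. This gives $\Delta_L(t) = 1$.

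For (b), I would compute $n$ by induction on $m$, tracking how the orientation of each $U_i$ in the resulting closed curve is forced by the previous band sums. The outermost loop $U_0$ contributes $+1$ to $n$; each newly attached $U_i$ contributes $\pm 1$, with the sign determined by the parity of $r_i$ together with the induced orientation inherited from $U_{i-1}$. One checks that this recursion is exactly the rule defining $\bar{r}_i$ in Definition \ref{degree}, so that $n = \sum_{i=0}^{m}\bar{r}_i = d$ up to sign.

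The main obstacle I expect is claim (a): the naive untwist-and-merge argument must account for the fact that middle unknots have bands on both sides, so rotating them re-twists the adjacent bands. A cleaner route is to exhibit a spanning disk for $L$ in $\mathbb{S}^3$ explicitly, assembled from a sequence of nested meridional disks of the complementary solid torus glued together along the connecting bands, and verify that the resulting surface is topologically a disk. Once the orientation convention is pinned down, claim (b) is routine bookkeeping that mirrors the recursive definition of degree line-by-line.
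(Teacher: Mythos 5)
Your overall strategy is the same as the paper's: reduce to \emph{Theorem \ref{lick-thm}}, observe that every lasso is an unknot in $\mathbb{S}^3$ (so $\Delta_L(t)=1$), and identify the winding number $n$ with the degree $d$. Your method for computing $n$ --- tracking the orientation forced on each nested unknot and summing the contributions --- is the linking-number-with-a-meridian variant that the paper itself mentions, in the remark following the proposition, as an alternative to its Seifert-circuit argument; up to packaging the two routes are equivalent. Your extra care on claim (a) (the middle unknots carrying bands on both sides) and your explicit appeal to $\Delta_C(t)=\Delta_C(t^{-1})$ to dispose of the sign of $n$ are reasonable refinements of steps the paper treats as immediate.

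The gap is in step (b), at the one sentence that carries all the content: ``one checks that this recursion is exactly the rule defining $\bar{r}_i$.'' It is not. Orientation tracking gives $o_0=1$ and $o_i=o_{i-1}(-1)^{1+r_i}$ for \emph{every} $i$, so that $n=\sum_i o_i$; \emph{Definition \ref{degree}} instead sets $\bar{r}_i=(-1)^{1+r_i}$ only when $\bar{r}_{i-1}=1$, and resets $\bar{r}_i=1$ whenever $\bar{r}_{i-1}=-1$, independently of $r_i$. The two recursions agree only as long as no reset occurs. Concretely, for $L(1,2,1)$ the induced orientations are $(o_0,o_1,o_2,o_3)=(+1,+1,-1,-1)$, so your count gives $n=0$, whereas $(\bar{r}_0,\bar{r}_1,\bar{r}_2,\bar{r}_3)=(1,1,-1,1)$ and $\deg=2$. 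So the ``routine bookkeeping'' does not close the argument: either you must prove that $\big|\sum_i o_i\big|$ coincides with $\sum_i\bar{r}_i$ (which the example above puts in doubt), or you must switch to the paper's bookkeeping, which pairs adjacent oppositely-oriented Seifert circuits and assigns $+1$ to a circuit that ``starts a new count'' after a paired pair --- a genuinely different rule from orientation tracking, and the step where the identification $n=d$ actually has to be earned. As written, your proposal asserts the decisive identity rather than proving it, and that identity is precisely the non-trivial part of the proposition.
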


\begin{proof}
We have two things to prove. The first one is that $\Delta_L(t)=1$, but this is trivial since every lasso is the trivial knot in $\mathbb{S}^3$, and so its Alexander polynomial is $1$. The remaining aspect to prove then is that $d=n$ in the previous theorem. How can we check this? Let us build a Seifert surface for $Sat(L,C)$ and then count how many times $L$ is a generator of $H_1(ST)$.

Consider the projection of $ST$ onto an annulus. Orient $L$ and apply the Seifert method \cite[Th. 2.2]{Li} to its normal diagram. We cap off with discs the circuits that remain bounded in the annulus; then we proceed to keep building the surface with the left circuits encircling the annulus. For our lasso case, $m+1$ Seifert circuits are generated encircling the annulus. When two adjacent circuits have opposite directions we pair them and use annuli to cap them off. From the number of unpaired circuits (which will be thought as copies of a generator of $H_1(ST)$) will arise $P$ as $n$ times a generator of $H_1(ST)$. It remains then to count properly this number $n$.

The Seifert circuits encircling the annulus will look like in the following picture (we will only depict one side of the composition).
\begin{align*}
&\includegraphics[scale=0.3]{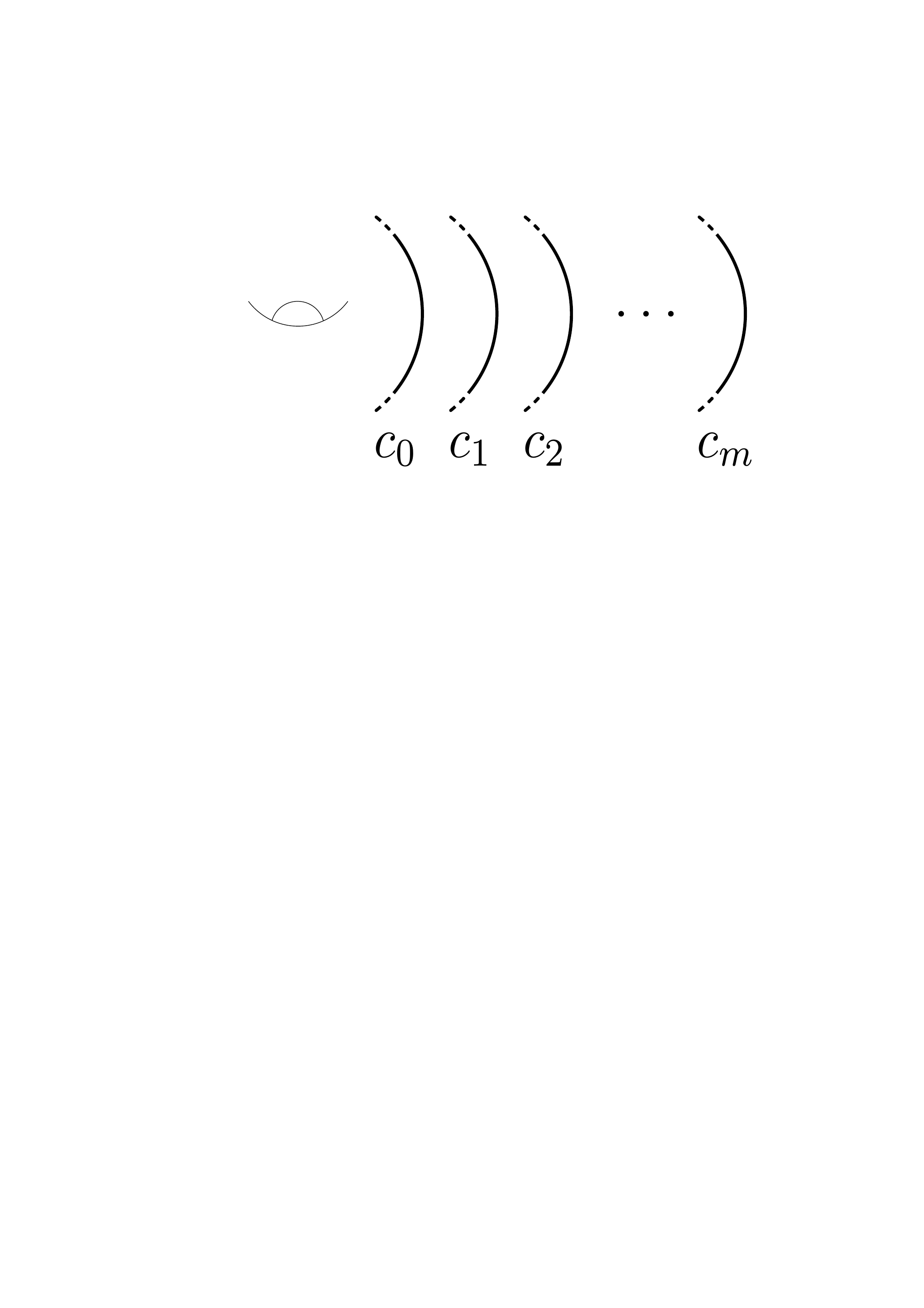}
\end{align*}
\indent We start supposing that all the circuits have the same direction.
\begin{align*}
&\includegraphics[scale=0.3]{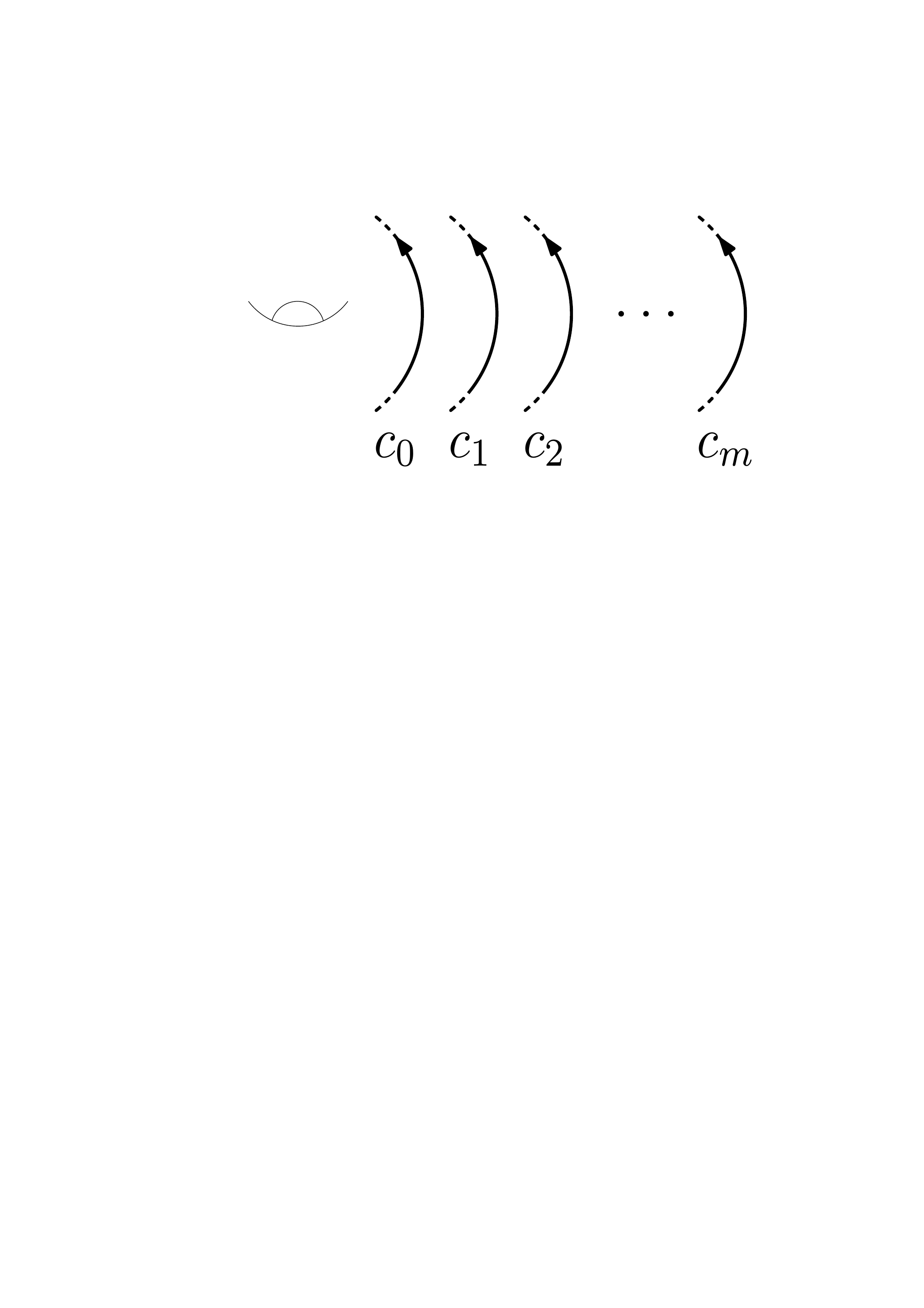}
\end{align*}
\indent That would result in $n=1+1+1+...+1=m+1$. Suppose now that one of the directions is reverted. We will start counting from the inner side of the projection.
\begin{align*}
&\includegraphics[scale=0.3]{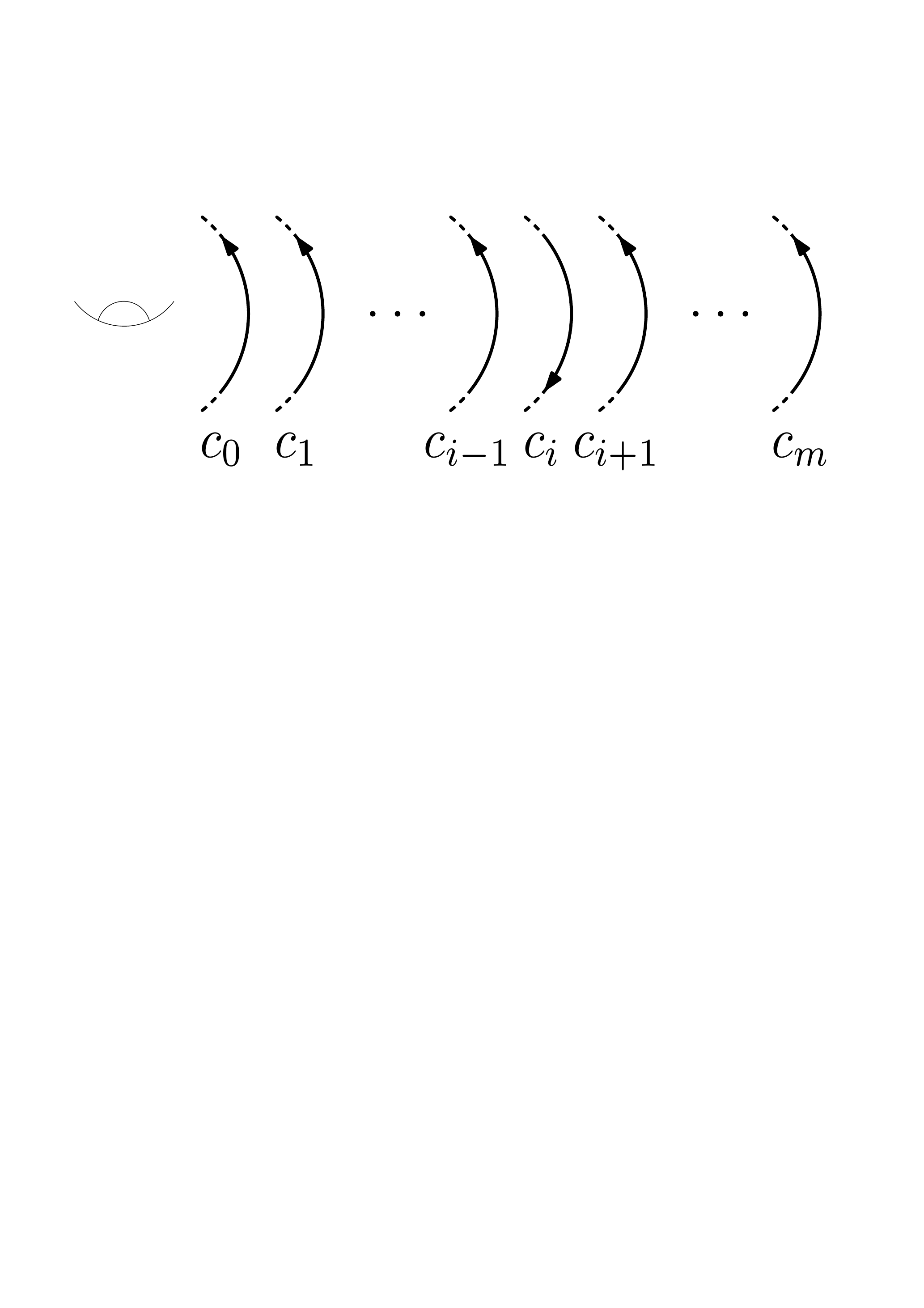}
\end{align*}
\indent In this case, $n=1+1+...+1-1+1+...+1=m+1-2=m-1$. It is important to notice that no matter what the orientation of $c_{i+1}$ is, it will always count positively to this matter, since it cannot be matched unless $c_{i+2}$ has opposite direction to it. Therefore, as one builds the Seifert surface, the number of times $L$ will represent a generator of $H_1(ST)$ (the unpaired circuits) is exactly the sum made after assigning to each circuit 1 if we start a ``new count'' after two paired circuits or the circuit has the ``same orientation'' as the previous one, and -1 if it has ``reverted orientation'' from the previous one. Thus, bearing in mind that the orientation depends on the number of twists of the strips in the original lasso (odd: same; even: revert), by \emph{Definition \ref{degree}} we obtain the desired equality.
\[n=\deg(L)=d\]
\end{proof}

\begin{nrmrk}
It has been pointed out by some authors (see \cite{Cr}) that $n$ also equals to the linking number of $P$ and a meridian of $ST$. Indeed, in our previous proof the count of Seifert circuits minding directions would stand for the count of the linking number of $P$ and a meridian of $ST$. Therefore, using this fact we could give an alternative proof to the previous one which would be the same in principle.
\end{nrmrk}

\section{Kauffman bracket skein module and Jones polynomial of satellite knots}

The purpose of this section is to give explicit formulae of the Kauffman bracket and the Jones polynomial of satellite knots making use of the Kauffman bracket skein module in $ST$.

\subsection{Kauffman bracket skein module}
Let us consider call $\mathcal{D}$ to the set of isotopy classes of diagrams of framed links in $ST$ projected onto the annulus.

\begin{defn}\label{kbsm}
We define the \emph{Kauffman bracket skein module} in $ST$ (and write $S(ST)$) as the free $\mathbb{C}[A^{\pm 1}]$-module with basis $\mathcal{D}$ over the smallest submodule containing the skein relation presented below.
\begin{align*}
&\includegraphics[scale=0.3, angle=90]{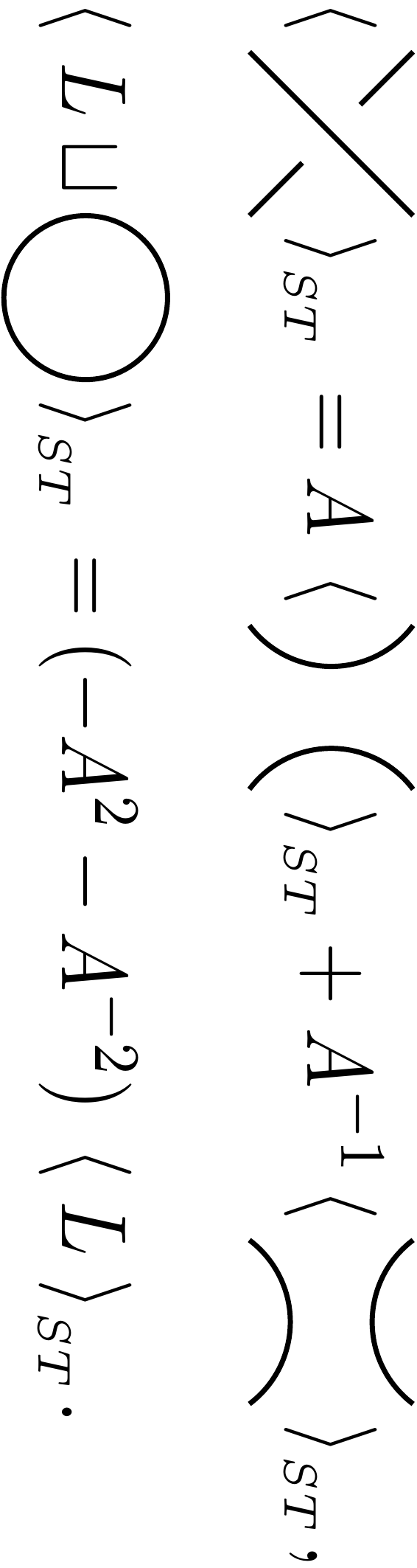}
\end{align*}
\end{defn}

In our case, if we consider as basis of $S(ST)$ the set $\mathcal{B^*}=\{z^{i*}\subST\;|\;i\geq 0\}$, where $z\subST$ is the core of $ST$, $S(ST)$ has an algebra structure that is the polynomial algebra $\mathbb{C}[A^{\pm 1};z\subST]$ (see \cite{Le}). We could depict this basis for $S(ST)$ as follows.
\begin{align*}
&\includegraphics[scale=0.3]{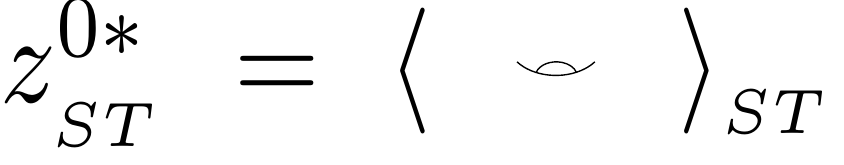} &&\spa{5}\:\includegraphics[scale=0.3]{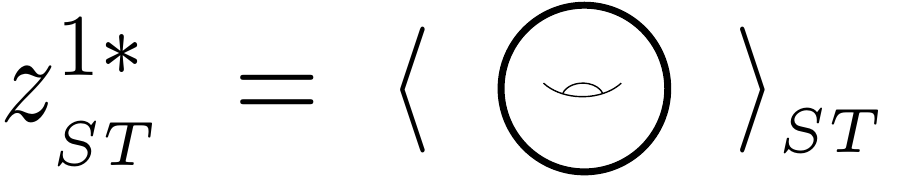} &&\raisebox{-0.4 em}{\includegraphics[scale=0.3]{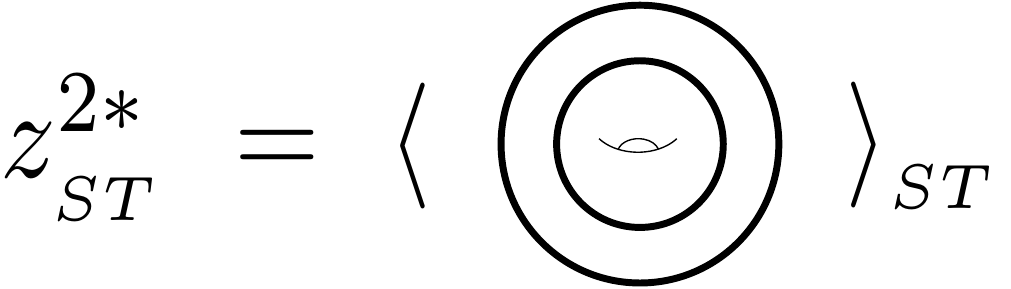}}\\
&& &\includegraphics[scale=0.3]{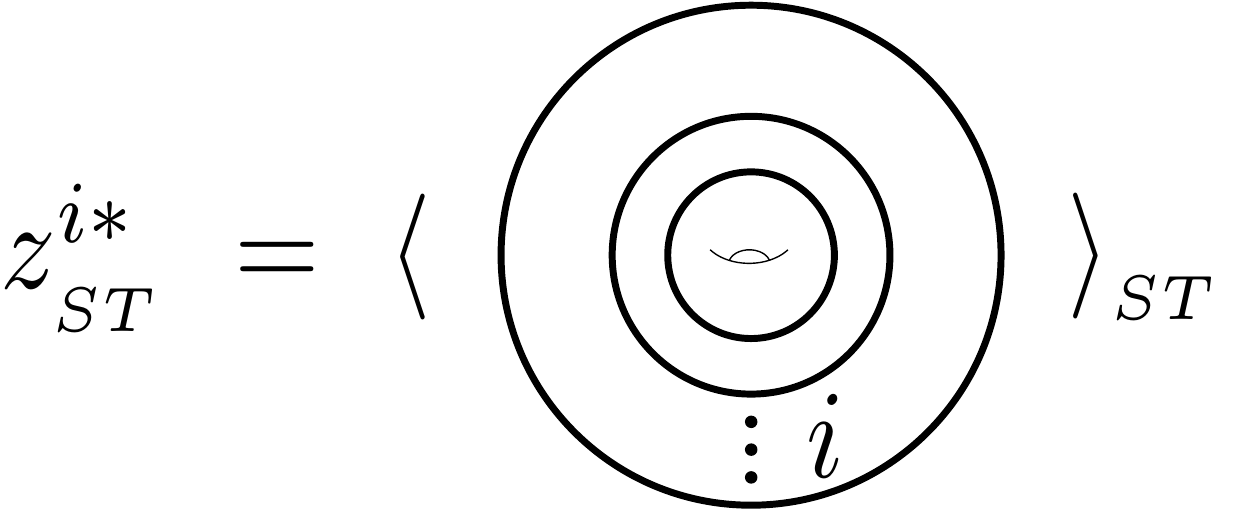}&&
\end{align*}

However, in practice we will consider a \textit{normalized} Kauffman bracket skein module ---for practical reasons---, which would be the previous where $z^0\subST=(-A^2-A^{-2})z^{0*}\subST$, so that in the subsequent part of the paper it matches with the Jones polynomial. Thus let us consider the following basis $\mathcal{B}$.
\begin{align*}
&z^0\subST=(-A^2-A^{-2})z^{0*}\subST &&z^i\subST=z^{i*}\subST
\end{align*}
\[\mathcal{B}=\{z^{i}\subST\;|\;i\geq 0\}.\]

This will be specially delicate in the later part where it will affect the product of basis elements in the following manner.
\begin{align*}
&z^0\subST z^j\subST=(-A^2-A^{-2})z^j\subST, &&z^i\subST z^j\subST=z^{i+j}\subST \qquad\textnormal{if $i,j\neq 0.$}
\end{align*}

When calculating, all knots will be taken with blackboard framing.

\begin{exmp}
Kauffman bracket in $ST$ of the left-handed trefoil regarded as the $(2,3)$-torus knot.
\begin{align*}
&\includegraphics[scale=0.3, angle=90]{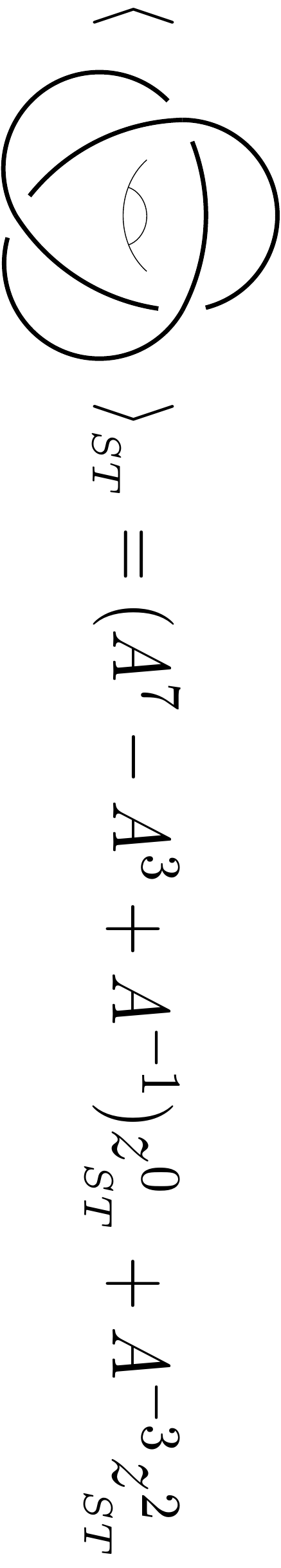}
\end{align*}
\end{exmp}
\begin{exmp}
Kauffman bracket in $ST$ of the embedded left-handed trefoil.
\begin{align*}
&\includegraphics[scale=0.3, angle=90]{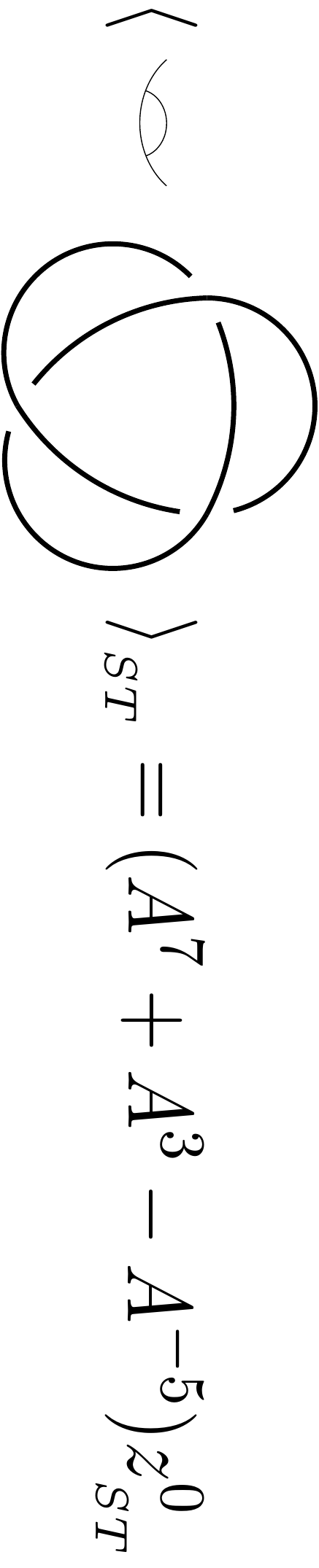}
\end{align*}

Observe that the result is coherent with the one that we obtain if we operated the usual Kauffman bracket of $3_1$ in $\mathbb{S}^3$.
\end{exmp}

Let us consider now $\mathscr{L}$ (the set of lassos) the object of study.

\begin{prop}\label{lasso-prop}
Let $L(r_1,r_2,...,r_m)\in \mathscr{L}$ be a lasso with normal diagram. Then its Kauffman bracket skein module in $ST$ can be calculated as follows.
\begin{align*}
&1.\;\kauf{L(\emptyset)}=\;z^1\subST;\spa{20}\spa{20}\spa{20}\\
&2.\;\kauf{L(0)}=\;z^0\subST;\\
&3.\;\kauf{L(0,r_2,...,r_m)}=\;T(r_2)\kauf{L(r_3,r_4,...,r_m)};\\
&4.\;\kauf{L(r_1,r_2,...,r_m)}= \left\{ 
\begin{array}{ll}
	A\kauf{L(r_1-1,r_2,...,r_m)}+A^{-1}z^1\subST T(r_1-1)\kauf{L(r_2,r_3,...,r_m)}\quad \textnormal{if $r_1>0$}\\
	A^{-1} \kauf{L(r_1+1,r_2,...,r_m)}+Az^1\subST T(r_1+1)\kauf{L(r_2,r_3,...,r_m)}\quad \textnormal{if $r_1<0$}
\end{array}
\right.
\end{align*}
where $T(n)=(-A^{-3})^n$.
\end{prop}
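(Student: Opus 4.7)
The formula is established by cases. Parts (1) and (2) are direct identifications of the normal diagram with an element of the basis $\mathcal{B}$; part (3) combines the unframed isotopy $L(0, r_2, r_3, \dots, r_m) \simeq L(r_3, \dots, r_m)$ recorded in the remark after Definition~\ref{general-lasso} with a framing correction; part (4) is a one-step application of the Kauffman bracket skein relation to an outermost crossing of the $r_1$-block, so the whole statement reduces to careful diagrammatic bookkeeping.

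For the base cases, $L(\emptyset)$ is by definition a single unknot encircling the hole of $ST$, hence isotopic with blackboard framing to the core $z\subST$, giving $\kauf{L(\emptyset)} = z^1\subST$. For (2), inspection of the normal diagram of $L(0)$ shows that it is isotopic to a small unknot contractible in $ST$, and the normalization $z^0\subST = (-A^2 - A^{-2})\, z^{0*}\subST$ is precisely the Kauffman bracket of that disjoint unknot. For (3), when $r_1 = 0$ the outermost loop detaches by the cited isotopy and carries the $r_2$-block along with it; under blackboard framing these formerly shared twists turn into $r_2$ self-twists on a single strand, whose cumulative framing contribution is exactly $T(r_2)$, leaving $L(r_3, \dots, r_m)$ intact.

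For the recursive step (4), I would fix an outermost crossing of the $r_1$-block. If $r_1 > 0$ it is a positive crossing; the $A$-smoothing simply deletes it and produces the normal diagram of $L(r_1-1, r_2, \dots, r_m)$, while the $A^{-1}$-smoothing disconnects the outermost unknot from the nested remainder. The freed outer loop still encircles the core of $ST$ and contributes $z^1\subST$; the remaining $r_1 - 1$ crossings collapse onto a single strand as self-twists, contributing $T(r_1 - 1)$; and the leftover inner diagram is exactly $L(r_2, r_3, \dots, r_m)$. Multiplication by the prefactor $A^{-1}$ assembles the second term. The $r_1 < 0$ case is the mirror argument: the chosen crossing is now negative, so $A$ and $A^{-1}$ exchange roles and $r_1 - 1$ is replaced by $r_1 + 1$. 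The one genuinely delicate point, and the main obstacle, is the framing bookkeeping: every time an isotopy or a smoothing collapses a two-strand block of twists onto a single strand, one must check that the chirality of the residual self-twists agrees with the sign of $r_1$ (or $r_2$), so that their contribution is exactly $T(r_1 - 1)$, $T(r_1 + 1)$, or $T(r_2)$ and not the inverse. Once this sign convention is nailed down the rest of the proof is routine.
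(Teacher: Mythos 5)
Your proposal is correct and follows essentially the same route as the paper: parts (1) and (2) by inspection of the normal diagrams, part (3) via the isotopy $L(0,r_2,\dots,r_m)\simeq L(r_3,\dots,r_m)$ with the undone twists contributing the Reidemeister-I constant $T(r_2)$, and part (4) by resolving a single crossing of the $r_1$-block with the Kauffman skein relation, one smoothing giving $L(r_1\mp1,r_2,\dots,r_m)$ and the other splitting off a core-parallel loop worth $z^1\subST$ times the curl constant $T(r_1\mp1)$. The only (immaterial) difference is that the paper smooths the crossing nearest the innermost circle rather than the outermost one of the block; by symmetry both choices yield the same two terms.
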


\begin{proof}
The first two equalities arise from their definition.
\begin{align*}
&\includegraphics[scale=0.3, angle=90]{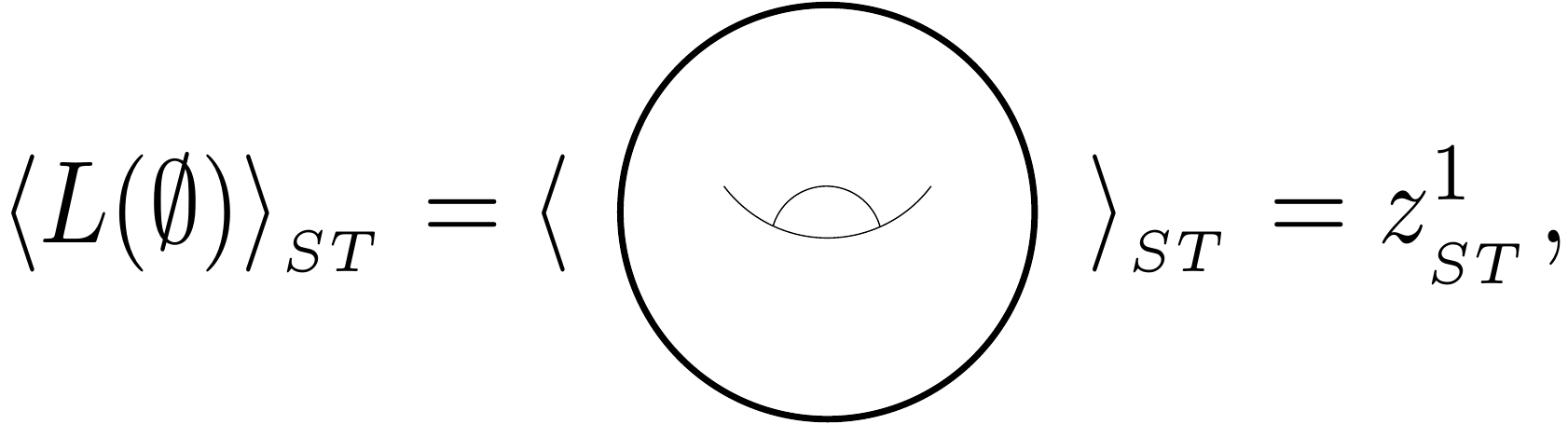}&&\raisebox{0.8 em}{\includegraphics[scale=0.3, angle=90]{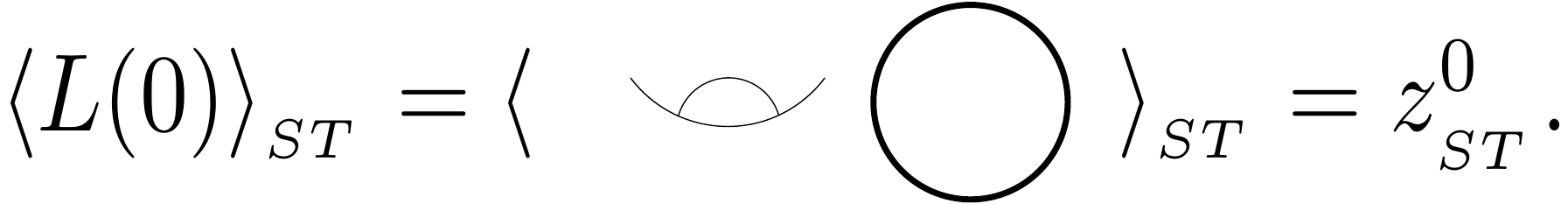}}
\end{align*}

The third one arises from the isotopy equivalence $L(0,r_2,r_3,...,r_m)\simeq L(r_3,...,r_m)$ that we saw earlier, where this time the twists unmade for that purpose count as a constant in the skein module.
\[\includegraphics[scale=0.45, angle=90]{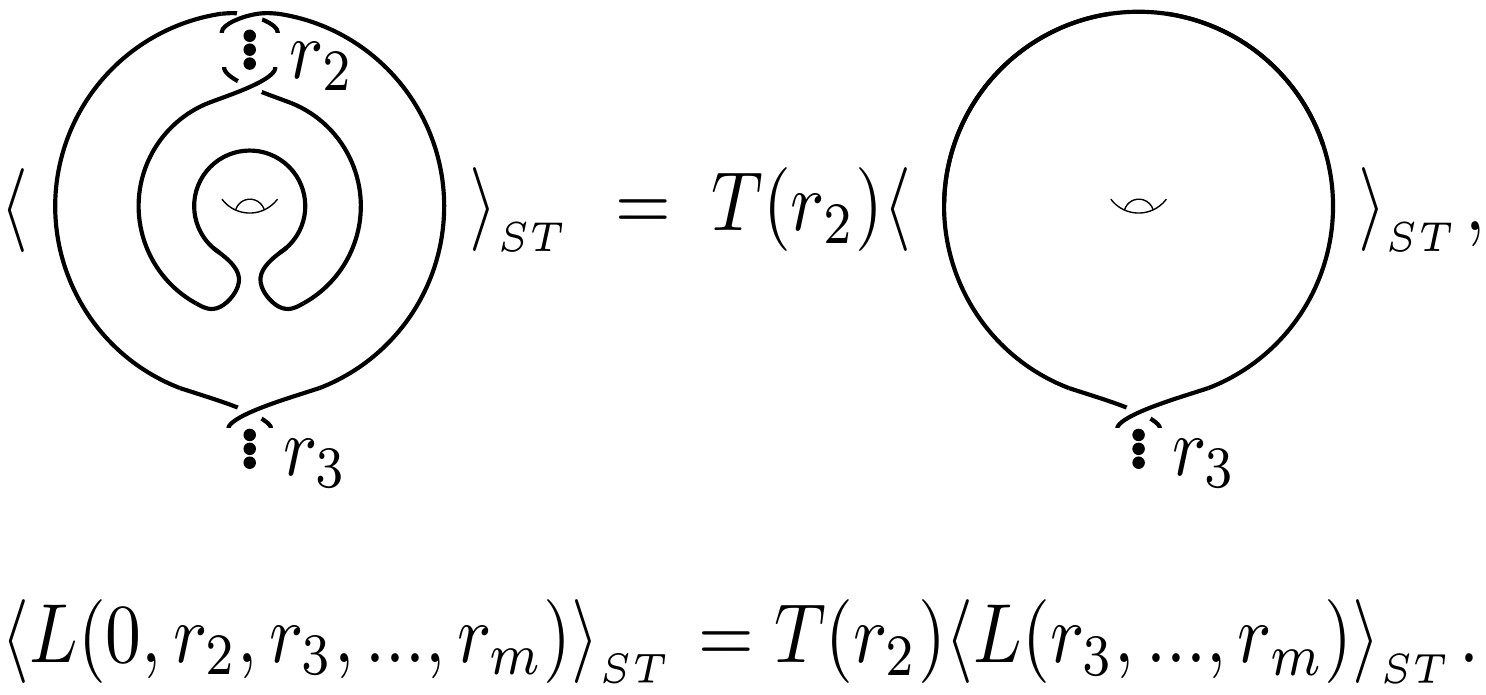}\]

Finally, we will depict the last equality considering $r_1 > 0$. If $r_1<0$ the proof is identical but for some signs.
It is enough, then, to apply the skein relation to the first crossing of the lasso counting from the innermost part to get the desired result. As we can see, the number of twists in the first diagram is reduced by $1$, whereas in the second diagram it is reduced ---and leads to a constant as in the above case--- and split into two disjoint diagrams ---what gives rise to the element $z^1\subST$.
\begin{align*}
&\includegraphics[scale=0.3, angle=90]{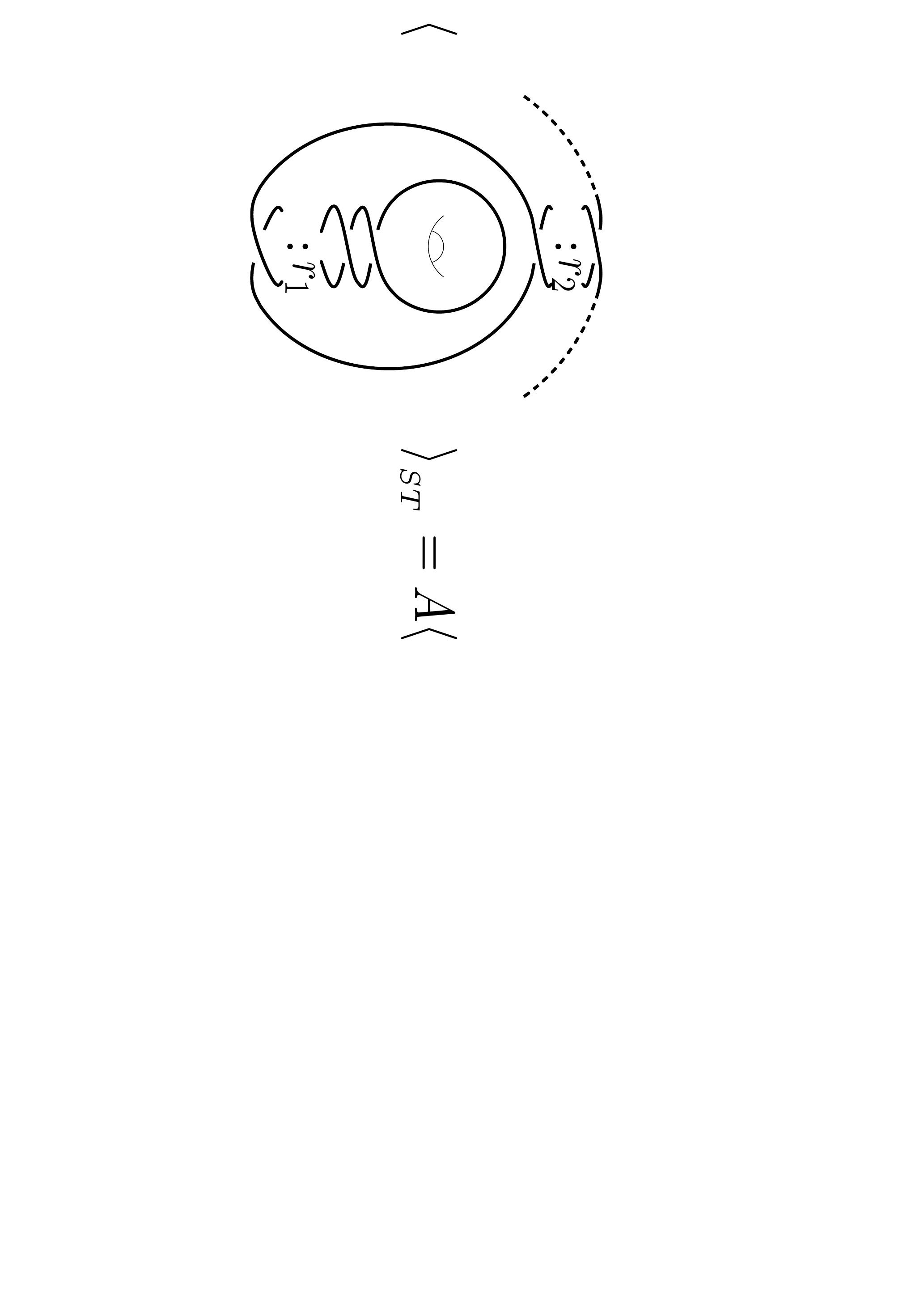}\quad\includegraphics[scale=0.3, angle=90]{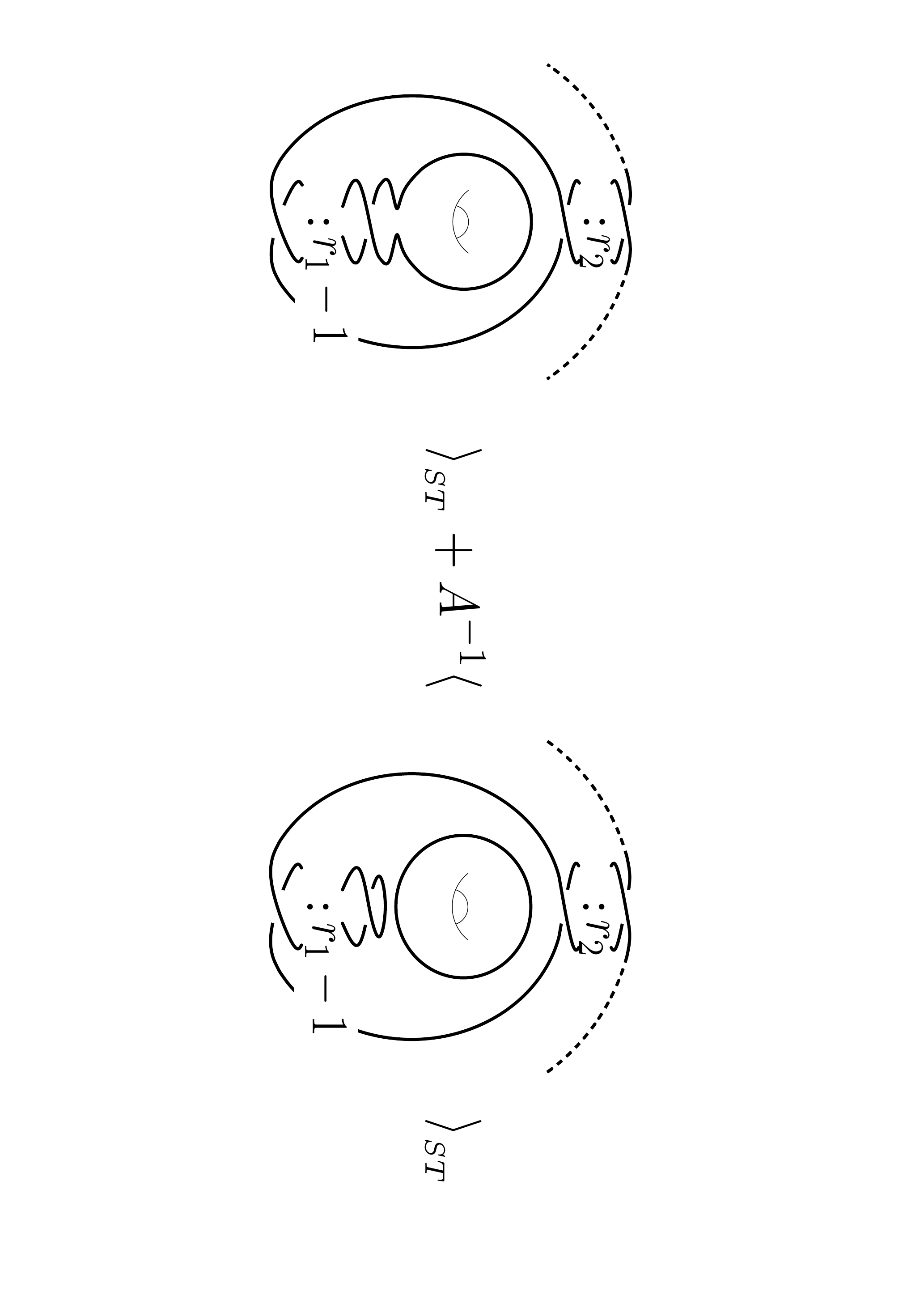}
\end{align*}
\[\kauf{L(r_1,r_2,...,r_m)}=A\kauf{L(r_1-1,r_2,...,r_m)}+A^{-1}z^1\subST T(r_1-1)\kauf{L(r_2,...,r_m)}.\]
\end{proof}

\begin{corol}\label{coro}
For the particular case of the simple lasso $L(r)$ with $r>0$, we obtain the next explicit formula.
\[\kauf{L(r)}=A^rz^0\subST+z^2\subST T(r)\sum\limits_{i=1}^r(-1)^iA^{4i-2}.\]
\end{corol}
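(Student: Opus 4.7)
The plan is to iterate the recursion of Proposition \ref{lasso-prop} part 4, specialized to the one-parameter case $m=1$. With $r_1=r>0$, the recursion reads
\[\kauf{L(r)} = A\kauf{L(r-1)} + A^{-1}z^1\subST T(r-1)\kauf{L(\emptyset)}.\]
Substituting $\kauf{L(\emptyset)}=z^1\subST$ from part 1 and using the product rule $z^1\subST z^1\subST=z^2\subST$ in the normalized basis $\mathcal{B}$, this collapses to the linear inhomogeneous recurrence
\[\kauf{L(r)} = A\kauf{L(r-1)} + A^{-1}T(r-1)z^2\subST,\]
whose base case $\kauf{L(0)}=z^0\subST$ is furnished by part 2.

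I would then prove the formula by induction on $r$, or equivalently by unrolling this recurrence directly: each application pulls out one factor of $A$ and leaves one additive correction, so after $r$ steps the base case contributes $A^r z^0\subST$ and the corrections accumulate to
\[z^2\subST\sum_{k=0}^{r-1}A^{k-1}T(r-1-k).\]

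Finally, a reindexing $i=k+1$ together with the expansion $T(r-i)=(-1)^{r-i}A^{-3(r-i)}$ and the parity identity $(-1)^{r-i}=(-1)^{r+i}$ rewrites the accumulated sum in the form $T(r)\sum_{i=1}^r (-1)^i A^{4i-2}$ asserted by the corollary.

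The argument is essentially bookkeeping and I do not foresee a genuine obstacle. The one subtlety worth a line of care is the normalized convention: $\kauf{L(0)}=z^0\subST$ is already the normalized basis element, so one must resist the temptation to reintroduce the factor $(-A^2-A^{-2})$ that distinguishes $z^0\subST$ from the unnormalized $z^{0*}\subST$ when pulling $A^r$ through the base case.
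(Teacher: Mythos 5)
Your proposal is correct and takes essentially the same route as the paper: the paper also proves the formula by iterating part 4 of \emph{Proposition \ref{lasso-prop}} with $\kauf{L(\emptyset)}=z^1\subST$ and $z^1\subST z^1\subST=z^2\subST$, the only cosmetic difference being that it runs the induction upward from the base case $r=1$ (where $\kauf{L(1)}=Az^0\subST+A^{-1}z^2\subST$) rather than unrolling the recurrence from $\kauf{L(0)}=z^0\subST$. Your reindexing to reach $T(r)\sum_{i=1}^r(-1)^iA^{4i-2}$ and your caution about not reintroducing the factor $(-A^2-A^{-2})$ into the normalized $z^0\subST$ are both sound.
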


\begin{proof}
We prove it inductively. When $r=1$ the result is true:
\[\kauf{L(1)}=Az^0\subST+A^{-1}z^2\subST.\]

Let us now suppose that the general case for $r$ is true and inspect the case $r+1$:
\begin{align*}
\kauf{L&(r+1)} = A\kauf{L(r)}+A^{-1}z^1\subST T(r)\kauf{L(\emptyset)}\\
&\spa{-1}\stackrel{hyp}{=}A\bigg[A^rz^0\subST+z^2\subST T(r)\sum\limits_{i=1}^r(-1)^iA^{4i-2}\bigg]+A^{-1}z^2\subST T(r)\\
&=A^{r+1}z^0\subST+z^2\subST T(r)\bigg[A\sum\limits_{i=1}^r(-1)^iA^{4i-2}+A^{-1}\bigg]=A^{r+1}z^0\subST-z^2\subST T(r+1)\bigg[A^4\sum\limits_{i=1}^r(-1)^iA^{4i-2}+A^2\bigg]\\
&=A^{r+1}z^0\subST+z^2\subST T(r+1)\sum^{r}_{i=0}(-1)^{i+1}A^{4(i+1)-2}=A^{r+1}z^0\subST+z^2\subST T(r+1)\sum^{r+1}_{i=1}(-1)^iA^{4i-2}.
\end{align*}

If $r<0$, then it suffices to replace every $A$ with $A^{-1}$ and the analogous result would be obtained.
\end{proof}

\subsection{Jones polynomial}

We will now make use of the previously defined Kauffman bracket skein module to express the Jones polynomial of a link in $ST$.

\begin{defn}\label{jones-st}
Let $L$ be an oriented framed link in $ST$ with a diagram $D$. We define the \emph{Jones polynomial} of $L$ in $ST$ (as resemblance of the Jones polynomial in $\mathbb{S}^3$) as the following.
\[J\subST(L)=T(wr(D))\kauf{D}\Big|_{t^{^1/_2}=A^{-2}},\]
where $T(n)=(-A^{-3})^n$ and $wr(D)$ is the writhe of the link diagram.
\end{defn}

The subindex $ST$ will be only specified for the solid torus case. For the $\mathbb{S}^3$ case (the usual case), no subindex will be written neither for the Jones polynomial nor for the Kauffman bracket.\\

In the case of $\mathbb{S}^3$ the only basis element of the bracket is $z^0\subS=\langle$ \raisebox{-0.4 em}{\includegraphics[scale=0.45]{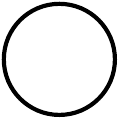}} $\rangle_{_{\mathbb{S}^3}}$ and in $ST$ the basis considered is $\mathcal{B}=\{z^i\subST\;|\;i\geq 0\}$, that is, $z^0\subST=\langle$ \raisebox{-0.4 em}{\includegraphics[scale=0.45]{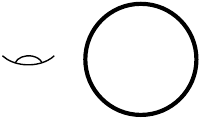}} $\rangle\subST$, $z^1\subST=\langle$ \raisebox{-0.4 em}{\includegraphics[scale=0.45]{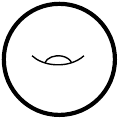}} $\rangle\subST$, $z^2\subST=\langle$ \raisebox{-0.75 em}{\includegraphics[scale=0.45]{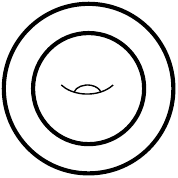}} $\rangle\subST$, etcetera.

\begin{nrmrk}
The choice of $z^0\subST$ was meant for this purpose, so that when a link $L$ in $ST$ is \underline{not} knotted around the center of $ST$ then its polynomial would coincide (except for the basis element of the skein module) with the polynomial calculated in $\mathbb{S}^3$. \[J\subST(\;\raisebox{0.2 em}{\includegraphics[scale=0.2]{Hole}}\;L\;)\big|_{z^0\subST=z^0\subS}=J(L).\]
\end{nrmrk}

\subsection{Working on satellites}

In this subsection we will consistently and constantly utilize the following notation:

\begin{itemize}
\item $L$ : (oriented) framed link in $\mathbb{S}^3$. %(No parallel)
\item $L^k$ : $k$-parallel of $L$. %(No twists)
\item $L^k(n)$ : $k$-parallel of $L$ with $n\in\mathbb{Z}$ right-handed half-twists.
\item $U$ : trivial knot (unknot) in $\mathbb{S}^3$.
\item $M_P$ : \emph{geometric degree} of a knot $P$ in $ST$ (see \cite{St}; it can be also regarded as the maximum power of $z\subST$ in $\kauf{P}=\sum\limits_{i=0}^{M_P}a_iz^i\subST$). When this geometric degree appears in the construction of a satellite knot we will assume that it refers to the pattern knot and will only write $M$.
\end{itemize}

\begin{nrmrk}
Using this notation, one can regard $U^k(n)$ as the ($k$,$n$)-torus knot.
\end{nrmrk}

%Hot zone and shit
\begin{defn}\label{hot}
Let $D$ be a diagram of a knot $K$ in $ST$ projected onto an annulus. Consider \iffalse esta por ver que siempre se pueda considerar&&&&&&&&&&&&&&&&&&&&&&&&&&&&&&&&&&&&&&&&&&&&&&&&&&&&&&&&&&&&&&&&&&&&&&&&&&&&&&&&&&&&&&&\fi a radius of the annulus that cuts $M_K$ arcs and no crossings of $D$, and take its complement in the annulus. We call a \emph{hot zone} of $D$ to the resulting region $H\simeq(\mathbb{S}^1\times I)\backslash (\{0\}\times I)\simeq(0,1)\times I$ in the annulus. When $K$ is considered in $\mathbb{S}^3$, we will call a \emph{hot zone} of $D$ to any rectangular region that contains all the crossings of the diagram. Typically, only one arc will be depicted out of the zone.
\end{defn}

By definition, a hot zone of a diagram contains all its crossings ---hence it also keeps all the information of the writhe. When abbreviated, we will represent it as a box with the label of the knot whose crossings contains, with $H$ as subindex.
\begin{align*}
& \includegraphics[scale=0.45, angle=90]{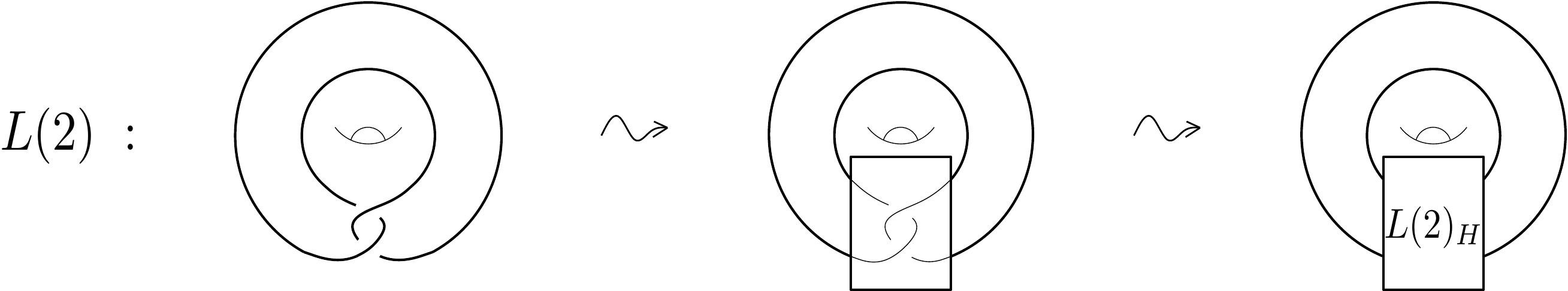}
\end{align*}

\iffalse{In the case of knots in the solid torus, a hot zone will be called \emph{reduced} if, furthermore, there does not exist an strand $s_i$ in $H$ such that once the knot is oriented $lk(s_i,s_j)=0\spa{2}\forall j\neq i$. In other words, $\forall i \spa{2}\exists j\neq i\spa{2}$ such that $\spa{2}lk(s_i,s_j)\neq 0$. (We are forcing every strand in $H$ to at least cross over or under other strand in $H$ different to itself)}\fi %%%%%%%%%%%%%%%%%%%%%%%%%%%%%%%%%%%%%%%%%%%% Esto pal futuro. Cuando una hot zone encima sea de minimal crossing number entonces sera reduced.

Hereinafter we will use the same naming for knots and their diagrams when calculi on diagrams occur.

%Composite diagram
\begin{defn}\label{composite}
Let $P$ be a knot in $ST$, and $C$ be a knot in $\mathbb{S}^3$ with blackboard framing. Let $M$ be the geometric degree of $P$. \iffalse [[[[[[[[[[[[[[[[[[[[[[[[[[[[[[[[[[[[[[[[[[[[[Then, by lemma]]]]]]]]]]]]]]]]]]]]]]]]]]]]]]]]]]]]]]]]]]]]]\fi We can depict $P$ using a hot zone that encompasses its crossings as follows.
\begin{align*}
& \includegraphics[scale=0.30]{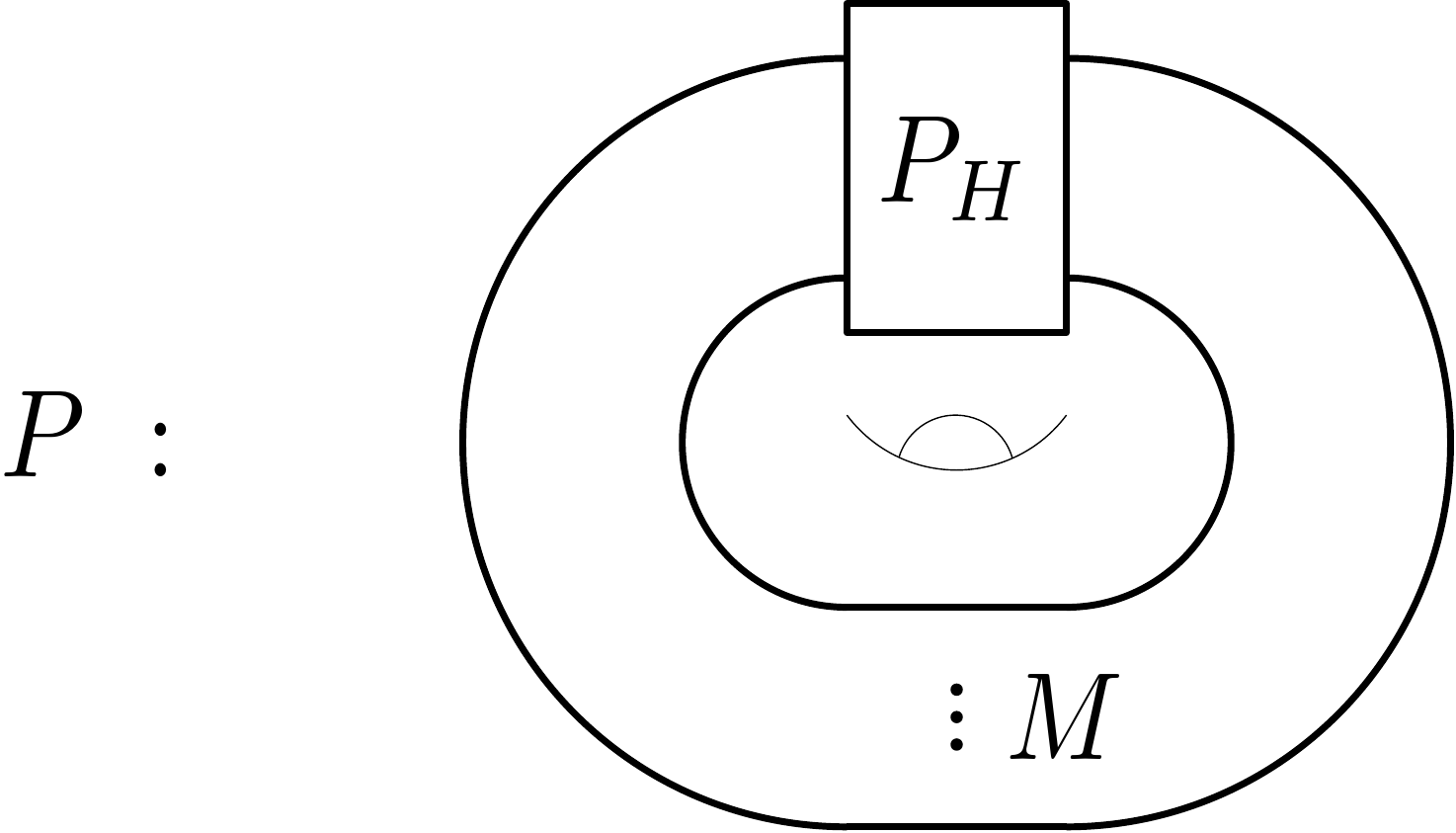}
\end{align*}
Here, $M$ is the number of arcs coming out from the hot zone $P_H$. Let now be $Sat(P,C)$ the satellite knot of $P$ and $C$. We will construct a diagram for the satellite from the diagrams of $P$ and $C$ orderly. By the definition of satellite knot, $P$ is sent into a neighbourhood of $C$, which we will portray as a hot zone $C_H$ with an only arc coming out of it. %%\[\includegraphics[scale=0.30]{Prop2-2}\]
The result of the composition is considering the $M$-parallel of $C$ and then attaching $P_H$ to it as shown below.
\[\includegraphics[scale=0.45, angle=90]{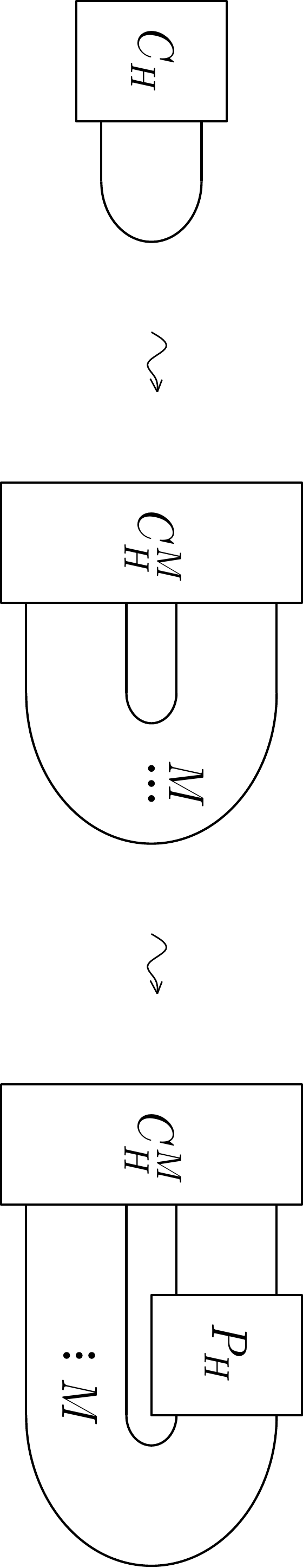}\]
In order for $P$ to be sent faithfully into a neighbourhood of $C$ we need to add as many full-twists to the construction as to compensate the writhe of $C$. The needed amount of full-twists is $-wr(C)$, that is, $-2wr(C)$ half-twists. We represent these twists using the given notation for the unknot, to which we add $-2wr(C)$ half-twists. Therefore, the needed hot zone in our diagram is $U^M_H(-2wr(C))$. The result will be the final form of the diagram for our satellite, and it is what we will call the \emph{composite diagram} of $Sat(P,C)$. It would look like the picture below.
\[\includegraphics[scale=0.30]{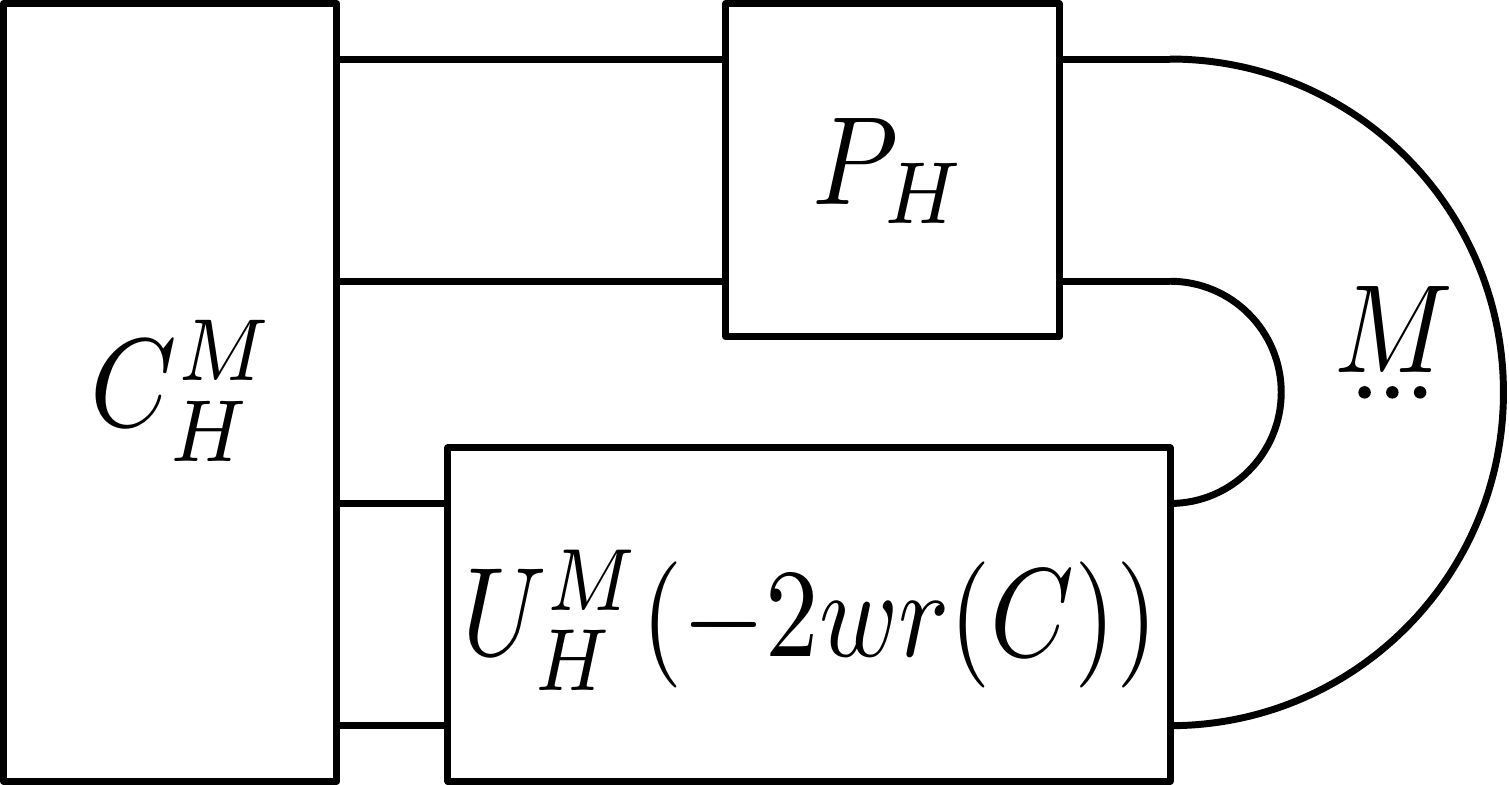}\]
\end{defn}

\begin{thm}\label{kauf}
Let $P$ be a knot in $ST$, $C$ be a knot in $\mathbb{S}^3$ and $Sat(P,C)$ be their satellite knot with composite diagram. Then, the following equality holds.
\[\kaufS{Sat(P,C)}=\kauf{P}\Big|_{z^k\subST=T(-wr(C))^{M-k}\kaufS{C^k(-2wr(C))}},\]
where $T(n)=(-A^{-3})^n$, $wr(C)$ is the writhe of $C$ and $0\leq k\leq M$.
\end{thm}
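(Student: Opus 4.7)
The plan is to prove the theorem by induction on the number of crossings inside $P_H$, keeping $M$ fixed as the number of arcs that cross the boundary of $P_H$ (which is preserved under skein resolutions inside $P_H$).

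For the inductive step, when $P_H$ has at least one crossing, I would pick one and apply the Kauffman skein relation at that crossing on both sides. On the composite side, this gives $\kaufS{Sat(P,C)} = A\,\kaufS{Sat(P^+,C)} + A^{-1}\,\kaufS{Sat(P^-,C)}$, where $P^\pm$ are the smoothed pattern tangles and $Sat(P^\pm,C)$ are the correspondingly resolved composite diagrams; crucially, $P^\pm$ have the same value of $M$ as $P$ since the smoothing occurs away from the boundary of $P_H$. On the $ST$ side, $\kauf{P}\big|_{\textnormal{subst}} = A\,\kauf{P^+}\big|_{\textnormal{subst}} + A^{-1}\,\kauf{P^-}\big|_{\textnormal{subst}}$ by linearity of the substitution. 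The inductive hypothesis applied to $P^\pm$, each having one fewer crossing in the hot zone, then delivers the equality.

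For the base case, $P_H$ has no crossings, so the restriction of $P$ to $P_H$ is a non-crossing matching $\sigma$ of the $2M$ boundary endpoints, consisting of $j$ vertical arcs (joining top to bottom) and $(M-j)/2$ caps on each side. Combined with the outside arcs in $ST$, this yields a disjoint union of $j$ essential circles and $l$ trivial circles, giving $\kauf{P} = (-A^2-A^{-2})^l z^{j*}\subST$. Under the substitution---and using $\kaufS{C^0(-2wr(C))} = -A^2-A^{-2}$ to cover $j = 0$ uniformly---the identity to verify reduces to
\[\kaufS{L_\sigma} = (-A^2-A^{-2})^l \, T(-wr(C))^{M-j} \, \kaufS{C^j(-2wr(C))},\]
where $L_\sigma$ is the composite link with $\sigma$ inserted in place of $P_H$. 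Structurally, $L_\sigma$ lies in a tubular neighbourhood of $C$ and decomposes into an essential part---the $j$ vertical arcs of $\sigma$ together with $j$ of the $M$ parallels of $C$, passing through $U^M_H(-2wr(C))$---and a closed-off part of $l$ trivial loops formed from the remaining $M - j$ parallels via the caps of $\sigma$. The essential part reconstitutes $C^j(-2wr(C))$ after the closed-off strands are pruned, and each of the $l$ trivial loops contributes a factor of $(-A^2-A^{-2})$.

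The hard part will be justifying the factor $T(-wr(C))^{M-j}$, which measures the obstruction to replacing the $M$-strand twist box $U^M_H(-2wr(C))$ by the $j$-strand twist box $U^j_H(-2wr(C))$. Every pair of strands inside $U^M_H(-2wr(C))$ crosses $-2wr(C)$ times, so each closed-off strand is fully entangled with the essential sublink; one must isotope each closed-off strand away from the essential strands through Reidemeister-II cancellations against its capped partner, tracking a residual Reidemeister-I adjustment whose Kauffman factor is precisely $(-A^{-3})^{-wr(C)} = T(-wr(C))$ per closed-off strand. Summing these contributions over the $M - j$ closed-off strands produces the desired $T(-wr(C))^{M-j}$, and the careful orientation- and sign-bookkeeping required for this writhe computation is the technical heart of the proof.
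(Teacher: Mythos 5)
Your proposal is correct and follows essentially the same route as the paper: the paper likewise resolves all crossings of $P_H$ by the skein relation (your inductive step, done implicitly) and then analyzes how each resulting crossingless state --- equivalently each basis element $z^k\subST$ --- transforms in the composite diagram, with the factor $T(-2wr(C))^{(M-k)/2}=T(-wr(C))^{M-k}$ arising exactly as you describe, from retracting the capped-off parallel strands through the twist box $U^M_H(-2wr(C))$. Your version is somewhat more carefully structured (explicit induction with a crossingless base case, and explicit attention to the writhe bookkeeping that the paper only illustrates in pictures for the cases $z^M\subST$ and $z^{M-2}\subST$), but the underlying argument is the same.
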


\begin{proof}
As previously stated, we will represent by boxes the hot zones of knots. Remember that knots and their diagrams are equally named.
Given the diagrams of $P$ and $C$, we consider the composite diagram of $Sat(P,C)$ as previously represented.
\[\includegraphics[scale=0.3]{Prop2-4}\]
We will focus on resolving $P_H$ using the skein relations of the Kauffman bracket skein module ---remember that we are considering blackboard framing. When resolving $P_H$ in $P$ (in $ST$) we are left with a polynomial on the basis $\mathcal{B}=\{z^k\subST\;|\;0\leq k\leq M\}$. By repeating the exact same process in $Sat(P,C)$ (in $\mathbb{S}^3$) we will be left with the same polynomial on the basis $\mathcal{B}_{Sat}=\{\kaufS{C^k(-2wr(C))}\;|\;0\leq k\leq M\}$. More in detail, each $z^k\subST$ will be transformed into $\kaufS{C^k(-2wr(C))}$ in the following manner.
\begin{itemize}
\item The biggest basis element $z^M\subST\in\mathcal{B}$ is transformed into the element $\kaufS{C^M(-2wr(C))}\in\mathcal{B}_{Sat}$.
\[\includegraphics[scale=0.45, angle=90]{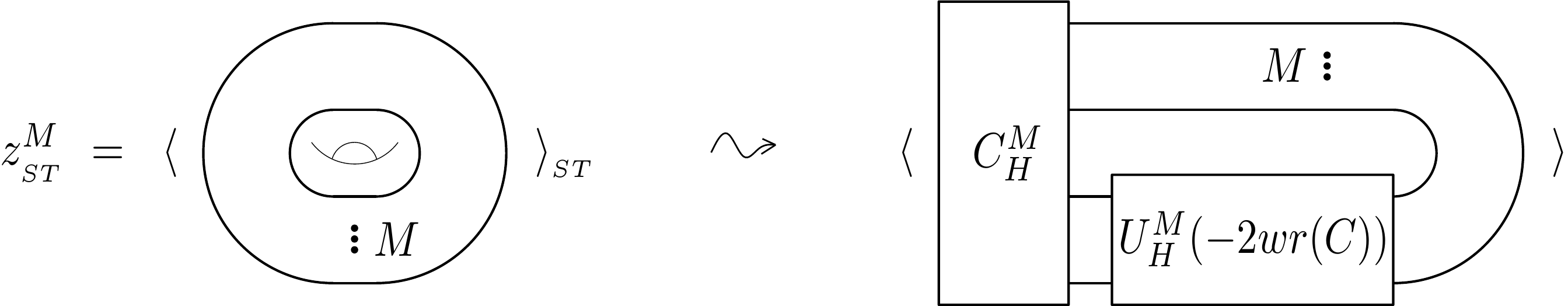}\]
\item From this point on, the transformations of the rest of the elements of the basis that appear carry a constant. The element $z^{M-2}\subST\in\mathcal{B}$ ---which is the next to appear--- is transformed into $T(-2wr(C))\kaufS{C^{M-2}(-2wr(C))}$, where the element of the basis $\kaufS{C^{M-2}(-2wr(C))}\in\mathcal{B}_{Sat}$ has been multiplied by $T(-2wr(C))$. This constant is turn obtained from a twisted unknot traversing $U^M_H(-2wr(C))$, when retracted to an untwisted unknot. The pictures below explain graphically the appearance of this constant.

\[\includegraphics[scale=0.45, angle=90]{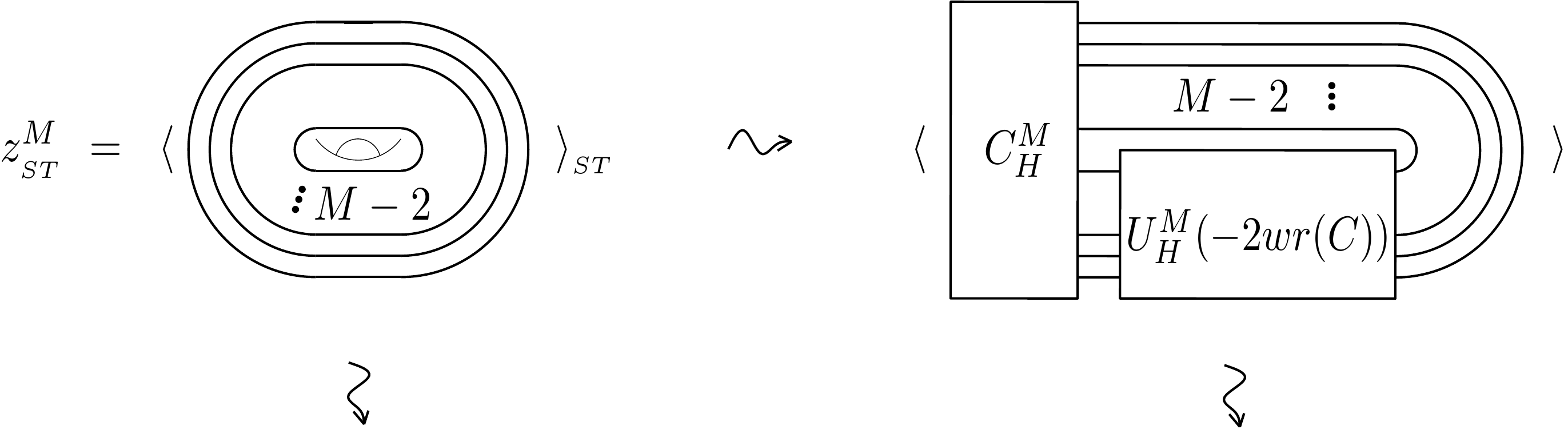}\]
\[\spa{15}\includegraphics[scale=0.45, angle=90]{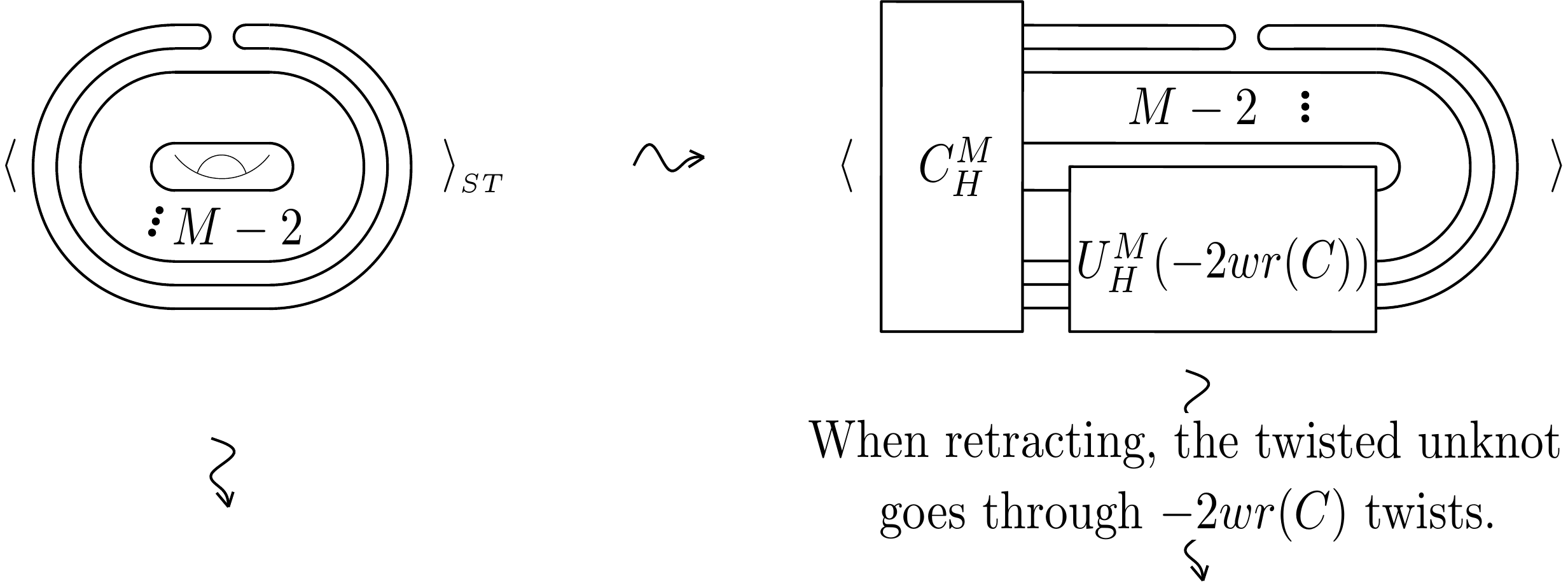}\]
\[\includegraphics[scale=0.45, angle=90]{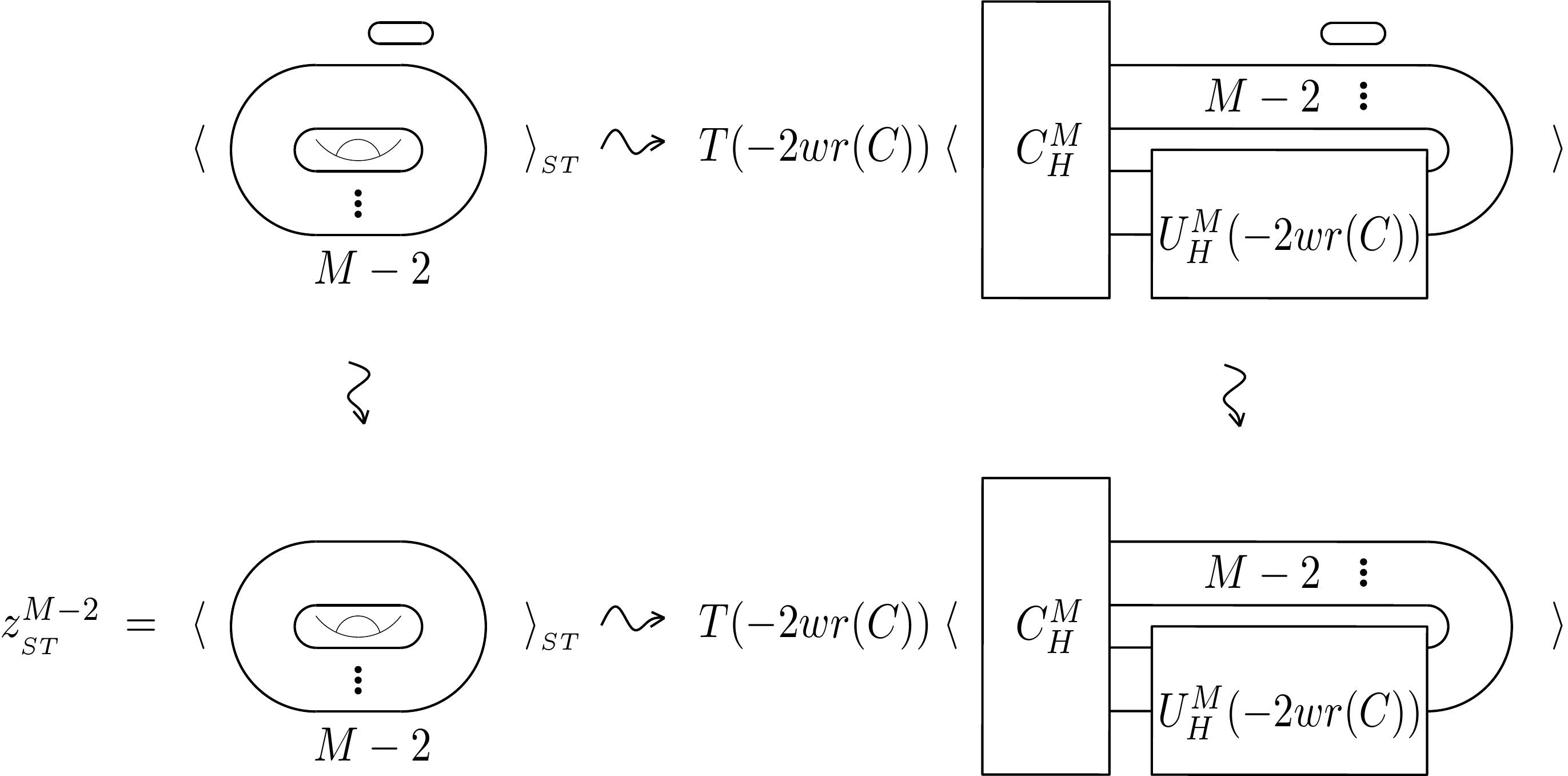}\]
\end{itemize}

As in previous occasions, the constant $T(n)$ equals to $(-A^{-3})^n$. Therefore the transformation is attained. Note that since all copies of $C$ regarded in $C^M(-2wr(C))$ are parallel, the result remains the same in spite of what two copies are united. If we now repeat the same reasoning consecutively, we find out that, in general, $z^k\subST$ is transformed into $T(-2wr(C))^{\frac{M-k}{2}}\kaufS{C^k(-2wr(C))}$, and hence we get the promised equality.
%%%%\[\kaufS{Sat(P,C)}=\kauf{P}\Big|_{z^k\subST=T(-wr(C))^{M-k}\kaufS{C^k(-2wr(C))}}.\]
\end{proof}

\begin{exmp}
We calculate the Kauffman bracket the satellite knot that uses the left-handed trefoil regarded as the $(2,3)$-torus knot as pattern, and the eight-figure knot ---whose write is $0$--- as companion.
\begin{align*}
&\includegraphics[scale=0.3, angle=90]{Trefoil-kauf-1}\\
&\includegraphics[scale=0.3, angle=90]{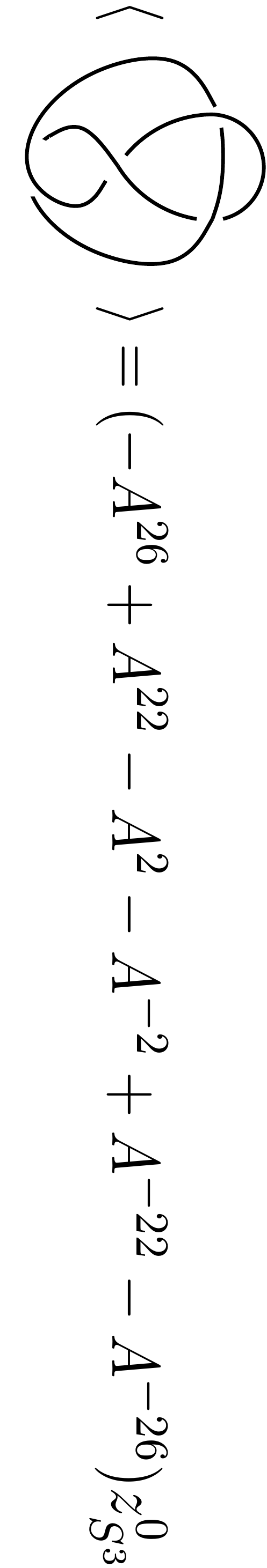}\\
&\raisebox{-1.9 em}{\includegraphics[scale=0.3, angle=90]{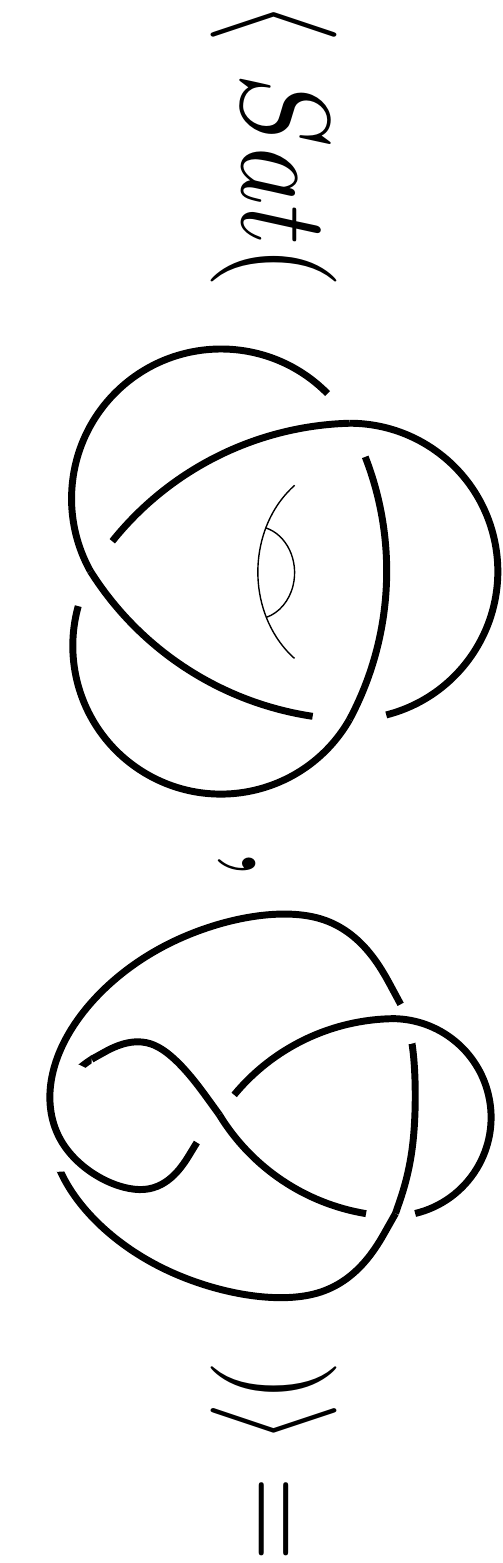}}(A^7-A^3+A^{-1})z^0\subST+A^{-3}z^2\subST\Big|_{\includegraphics[scale=0.2, angle=90]{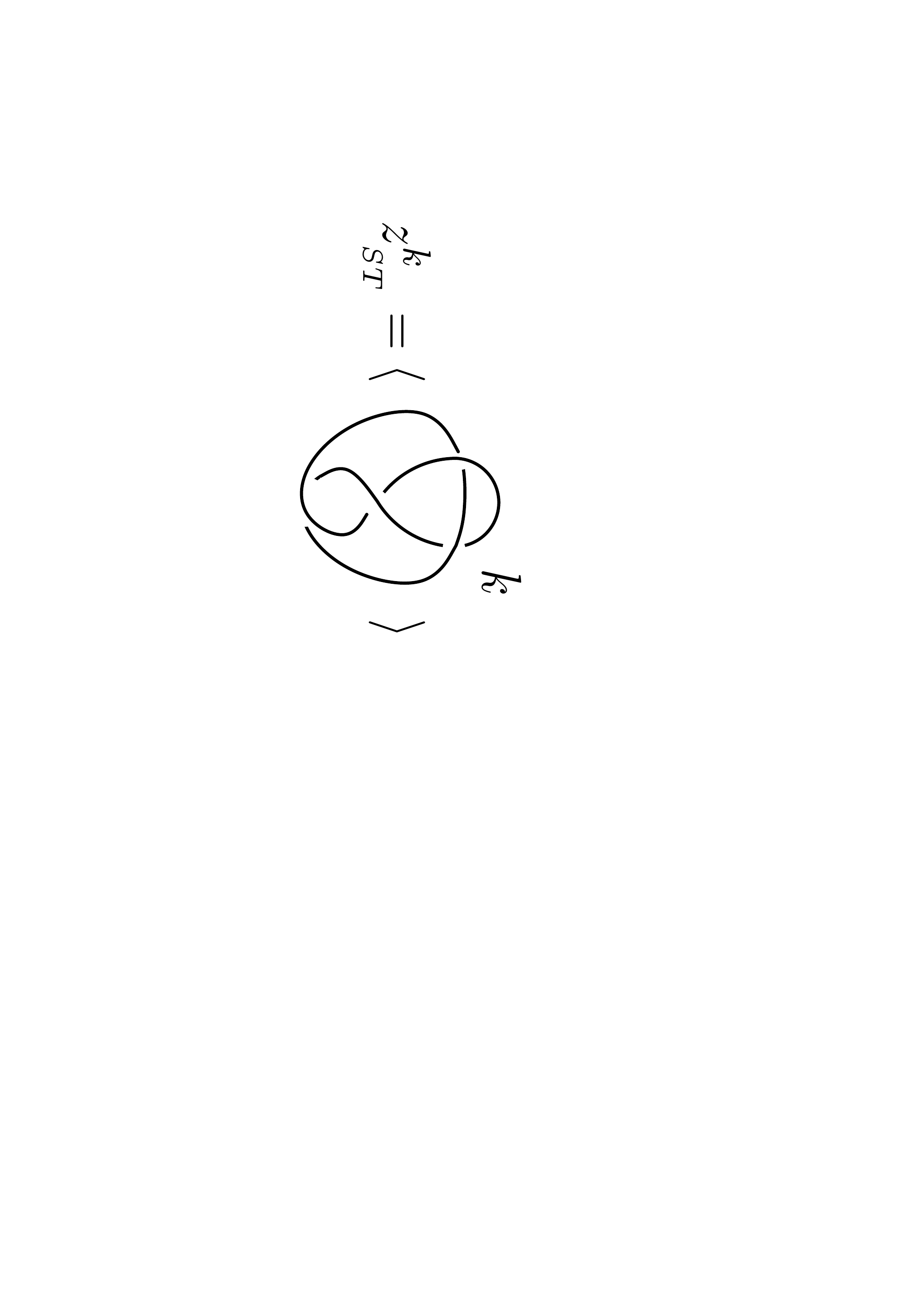}}\\
&\spa{14}=(A^7-A^3+A^{-1})z^0\subS+A^{-3}(-A^{26}+A^{22}-A^2-A^{-2}+A^{-22}-A^{-26})z^0\subS\\
&\spa{14}=(-A^{23}+A^{19}+A^7-A^3-A^{-5}+A^{-25}-A^{-29})z^0\subS
\end{align*}
\end{exmp}

\begin{lmm}\label{writhe}
Let $P$ be a knot in $ST$, $C$ be a knot in $\mathbb{S}^3$ and $Sat(P,C)$ be their satellite knot with composite diagram. Then, the writhe of the satellite knot can be expressed as follows.
\[wr(Sat(P,C))=wr(P)+Mwr(C).\]
\end{lmm}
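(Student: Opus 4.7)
The plan is to decompose the composite diagram of $Sat(P,C)$ into its three constituent regions (as laid out in \emph{Definition \ref{composite}}), compute the writhe contributed by each, and sum them up. By construction, the composite diagram is assembled from the hot zone $P_H$ of the pattern, the hot zone $C_H$ of $C$ thickened to its $M$-parallel, and the hot zone $U^M_H(-2wr(C))$ that is appended to compensate the framing of $C$.

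The first contribution is immediate: by the definition of hot zone, $P_H$ contains all crossings of $P$ with their original signs, so it contributes exactly $wr(P)$ to the writhe. The second contribution comes from the $M$-parallel of $C$ inside $C_H$: taking an $M$-parallel replaces a single strand by $M$ parallel strands, so each crossing of $C$ is amplified into $M\times M=M^2$ crossings of the same sign as the original, contributing $M^2\cdot wr(C)$ in total.

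The third contribution comes from the hot zone $U^M_H(-2wr(C))$, realized as $-2wr(C)$ right-handed half-twists applied to $M$ parallel strands. A single right-handed half-twist of $M$ strands is presented by the positive half-twist braid $\Delta_M$, which has exactly $\binom{M}{2}=M(M-1)/2$ crossings, all positive. Hence the $-2wr(C)$ half-twists contribute writhe $-2wr(C)\cdot M(M-1)/2=-wr(C)\cdot M(M-1)$. Summing the three pieces,
\[wr(Sat(P,C)) = wr(P) + M^2\cdot wr(C) - wr(C)\cdot M(M-1) = wr(P) + M\cdot wr(C),\]
as claimed.

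The only real obstacle is pinning down the conventions for the two auxiliary pieces: that each crossing of $C$ becomes $M^2$ same-sign crossings in the $M$-parallel, and that one right-handed half-twist of $M$ strands is realized by the positive braid $\Delta_M$ with $M(M-1)/2$ same-sign crossings. Once these two combinatorial facts are accepted, the algebraic identity $M^2-M(M-1)=M$ forces the stated formula with no further calculation.
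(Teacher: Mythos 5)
Your proposal is correct and follows essentially the same route as the paper: the paper likewise splits the composite diagram into the $wr(P)$ contribution plus $wr(C^M(-2wr(C)))=M^2wr(C)-2wr(C)\cdot\frac{M(M-1)}{2}$ and concludes by the identity $M^2-M(M-1)=M$. The only difference is cosmetic --- the paper first states a general formula $wr(K^k(n))=k^2wr(K)+n\frac{k(k+1)}{2}$ (whose $\frac{k(k+1)}{2}$ is evidently a typo for the $\binom{k}{2}$ you use, since the paper's own subsequent computation substitutes $\frac{M(M-1)}{2}$) and then specializes it, whereas you count the three regions directly.
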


\begin{proof}
A simple calculus of the writhe of an $n$ times half-twisted $k$-parallel knot $K$ shows us that
\[wr(K^k(n))=k^2wr(K)+n\frac{k(k+1)}{2}.\]
Glancing at the composite diagram of $Sat(P,C)$, we merely count:
\begin{align*}
wr(Sat(P,C))=&wr(P)+wr(C^M(-2wr(C)))=wr(P)+M^2wr(C)+(-2wr(C))\frac{M(M-1)}{2}\\
=&wr(P)+M^2wr(C)-M(M-1)wr(C)=wr(P)+Mwr(C).
\end{align*}
\end{proof}

The next theorem expresses the usual Jones polynomial of a satellite knot in terms of the Jones polynomial in $ST$ of its pattern and the parallel version of the Jones polynomial of its companion.

\begin{thm}\label{main}
Let $P$ be a knot in $ST$, $C$ be a knot in $\mathbb{S}^3$ and $Sat(P,C)$ be their satellite knot. Then, the following equality holds.
\[J(Sat(P,C))=J\subST(P)\Big|_{z^k\subST=J(C;\:k)},\]
where $0\leq k\leq M$, and $J(C;k)$ represents the $k$-parallel Jones polynomial.
\end{thm}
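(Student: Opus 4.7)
The plan is to reduce the statement to \emph{Theorem \ref{kauf}} and \emph{Lemma \ref{writhe}}, which together already encode all the topological information about the composite diagram of $Sat(P,C)$. Starting from the normalization defining the Jones polynomial,
\[J(Sat(P,C)) = T(wr(Sat(P,C)))\,\kaufS{Sat(P,C)}\Big|_{t^{1/2}=A^{-2}},\]
I would first apply \emph{Lemma \ref{writhe}} to split the writhe prefactor as $T(wr(P)+M\,wr(C)) = T(wr(P))\,T(wr(C))^M$, and then substitute the Kauffman bracket expansion from \emph{Theorem \ref{kauf}} to obtain
\[T(wr(P))\,T(wr(C))^M\,\kauf{P}\Big|_{z^k\subST = T(-wr(C))^{M-k}\kaufS{C^k(-2wr(C))}}.\]

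The next step is to absorb the scalar $T(wr(C))^M$ inside the bracket polynomial. Writing $\kauf{P} = \sum_{k=0}^{M} a_k\, z^k\subST$ in the basis $\mathcal{B}$, I would split $T(wr(C))^M = T(wr(C))^{M-k}\cdot T(wr(C))^k$ inside each monomial, so that the first factor cancels the $T(-wr(C))^{M-k}$ introduced by \emph{Theorem \ref{kauf}}. The remaining $T(wr(P))$ combines with $\kauf{P}$ to form $J\subST(P)$, while inside each $z^k\subST$-slot the surviving expression is $T(k\,wr(C))\,\kaufS{C^k(-2wr(C))}$, still evaluated at $t^{1/2}=A^{-2}$.

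The final step is to recognize this quantity as $J(C;k)$. Repeating the writhe computation used in the proof of \emph{Lemma \ref{writhe}} for arbitrary $k$ instead of $M$ gives $wr(C^k(-2wr(C))) = k\,wr(C)$, which is exactly the writhe normalization needed for $T(k\,wr(C))\,\kaufS{C^k(-2wr(C))}\big|_{t^{1/2}=A^{-2}}$ to equal $J(C^k(-2wr(C))) = J(C;k)$, the Jones polynomial of the faithful $k$-parallel of $C$. Reassembling the pieces yields the claimed formula.

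The main obstacle is the careful bookkeeping of the constants $T(\cdot)$: each monomial of $\kauf{P}$ must be given its correct share of $T(wr(C))^M$, one piece absorbed into $J(C;k)$ through the writhe of $C^k(-2wr(C))$ and the other cancelling the constant that \emph{Theorem \ref{kauf}} places in front of $\kaufS{C^k(-2wr(C))}$. Beyond this accounting, no further geometric input is needed, since \emph{Theorem \ref{kauf}} has already performed the topological substitution that replaces each power of the solid-torus core $z\subST$ with an appropriately framed $k$-parallel of $C$.
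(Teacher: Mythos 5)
Your proposal is correct and follows essentially the same route as the paper's own proof: split the writhe prefactor via Lemma \ref{writhe}, substitute the bracket expansion of Theorem \ref{kauf}, redistribute $T(wr(C))^M$ across the monomials so that each $z^k$-slot carries exactly $T(k\,wr(C))$, and identify the result with $J(C;k)$. Your explicit verification that $wr(C^k(-2wr(C)))=k\,wr(C)$ is a welcome extra detail (the paper instead appeals directly to the geometric interpretation of the faithful $k$-parallel, citing Murakami), but it does not change the argument.
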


\begin{proof}
By the definition of the Jones polynomial through the Kauffman bracket, we can express the Jones polynomial of $Sat(P,C)$ as follows.
\[J(Sat(P,C))=T(wr(Sat(P,C)))\kaufS{Sat(P,C)}\Big|_{t^{^1/_2}=A^{-2}},\]
where $T(n)=(-A^{-3})^n$. Let us analyze the right side of the above equality leaving out the substitution $t^{^1/_2}=A^{-2}$. The following first two equalities make use of \emph{Lemma \ref{writhe}} and \emph{Theorem \ref{kauf}} respectively.
\begin{align*}
T(wr&(Sat(P,C)))\kaufS{Sat(P,C)}=\\
&=T(wr(P))T(Mwr(C))\kaufS{Sat(P,C)}\\
&=T(wr(P))T(Mwr(C))\kauf{P}\Big|_{z^k\subST=T(-wr(C))^{M-k}\kaufS{C^k(-2wr(C))}}\\
&=T(wr(P))T(wr(C))^M\kauf{P}\Big|_{z^k\subST=T(wr(C))^{k-M}\kaufS{C^k(-2wr(C))}}\\
&=T(wr(P))\kauf{P}\Big|_{z^k\subST=T(wr(C))^MT(wr(C))^{k-M}\kaufS{C^k(-2wr(C))}}\\
&=T(wr(P))\kauf{P}\Big|_{z^k\subST=T(wr(C))^k\kaufS{C^k(-2wr(C))}}.
\end{align*}

We now reintroduce the substitution $t^{^1/_2}=A^{-2}$ without performing it.
\begin{align*}
J(Sat&(P,C))=\\
=&\Big(T(wr(P))\kauf{P}\Big|_{z^k\subST=T(kwr(C))\kaufS{C^k(-2wr(C))}}\Big)\Bigg|_{t^{^1/_2}=A^{-2}}\\
=&\big(T(wr(P))\kauf{P}\big)\Big|_{t^{^1/_2}=A^{-2}}\Bigg|_{z^k\subST=\big(T(kwr(C))\kaufS{C^k(-2wr(C))}\big)\Big|_{t^{^1/_2}=A^{-2}}}\\
=&J\subST(P)\Big|_{z^k\subST=J(C^k(-2wr(C)))}\\
=&J\subST(P)\Big|_{z^k\subST=J(C;\:k)}.
\end{align*}

This last equality arises from the geometrical interpretation of the $k$-parallel Jones polynomial: considering $C$ with blackboard framing, taking its $k$-parallel and adding as many full-twists as the opposite of the writhe of $C$ ---that is, $J(C^k(-2wr(C)))$--- is the same as considering $C$ with framing 0 and taking its $k$-parallel ---that is, $J(C;\:k)$--- (see \cite{Mu}).
\end{proof}

\begin{nrmrk}
The above result also implies the already known formula for the Jones polynomial of the connected sum of two knots, since the knot sum is a special case of satelliting. If we consider $K_1$ in $ST$ with geometric degree $1$ and $K_2$ in $\mathbb{S}^3$, their satellite knot equals to the connected sum of both knots.
\[J(Sat(K_1,K_2))=J(K_1\#K_2)=J(K_1)J(K_2)\]
\end{nrmrk}

\begin{exmp}
We calculate the Jones polynomial of the satellite knot that uses the left-handed trefoil regarded as the $(2,3)$-torus knot as pattern, and the eight-figure knot as companion.
\begin{align*}
&\includegraphics[scale=0.3, angle=90]{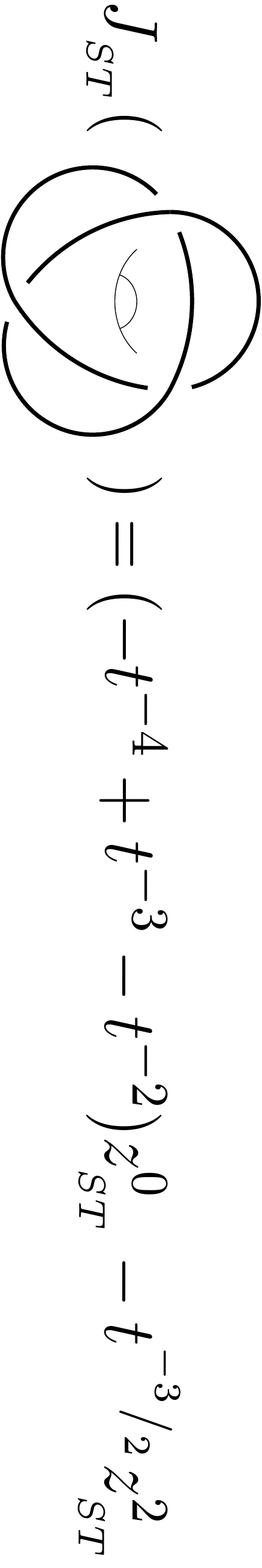}\\
&\raisebox{-1 em}{\includegraphics[scale=0.6, angle=90]{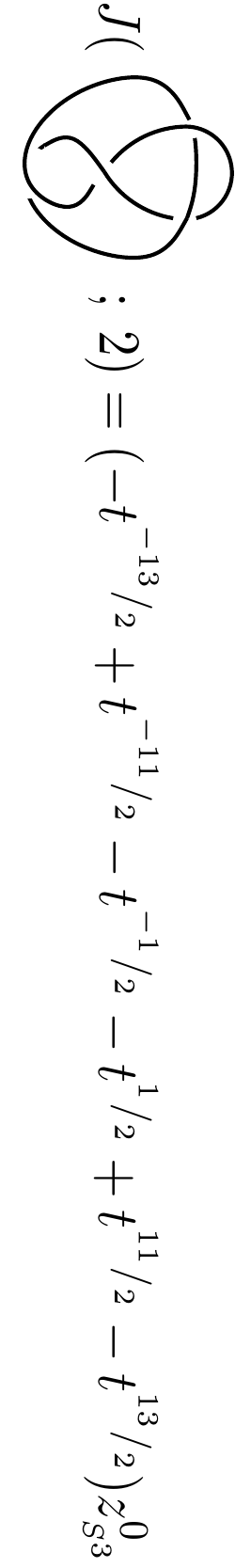}}\\
&\raisebox{-1.9 em}{\includegraphics[scale=0.3, angle=90]{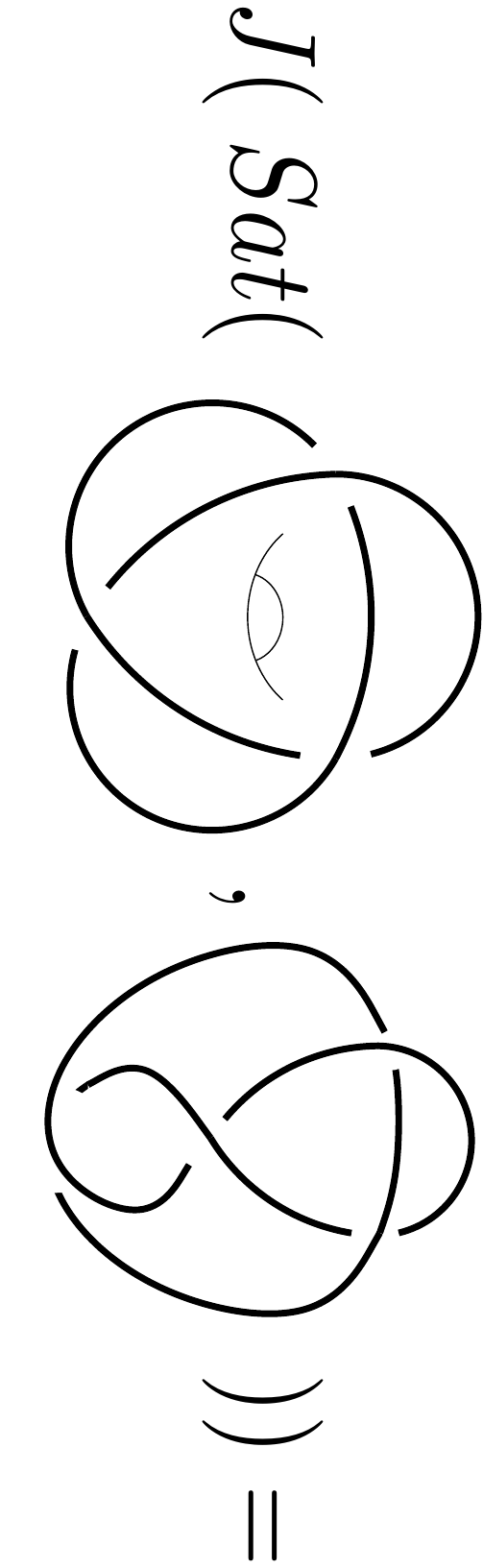}}\\
&\spa{14}=(-t^{-4}+t^{-3}-t^{-2})-t^{^{-3}/_2}(-t^{^{-13}/_2}+t^{^{-11}/_2}-t^{^{-1}/_2}-t^{^1/_2}+t^{^{11}/_2}-t^{^{13}/_2})z^0\subS\\
&\spa{14}=(t^{-8}-t^{-7}-t^{-4}+t^{-3}+t^{-1}-t^4+t^5)z^0\subS\\
&\spa{14}=t^{-8}-t^{-7}-t^{-4}+t^{-3}+t^{-1}-t^4+t^5.
\end{align*}
\end{exmp}

\newpage

\section{Infinitely many knots with the same Alexander polynomial and different Jones polynomial}

In this section we will make use of the results of the previous section, and apply them to some examples of the first section. The plan is easy: we will give \iffalse an infinite number\fi some infinite families of knots (based on lassos) with the property that all the knots in each family share their Alexander polynomial, but they are not the same since the Jones polynomial tells them appart. Please bear in mind that all calculi in the following subsections will be done for $r>0$. If $r$ is considered to be negative, then it suffices to replace in the formulae for Kauffman brackets $A$ with $A^{-1}$, and for Jones polynomials $t$ with $t^{-1}$ (and viceversa). Let us begin.

\subsection{$L(r)$: lassos of degree 0 or 2}
First, we work with the previously defined Kauffman bracket skein module in $ST$, and we use \emph{Proposition \ref{lasso-prop}} to calculate the Kauffman bracket of the family $L(r)$ where $r>0$. The first equality that we show is the one proved to be true in \emph{Corollary \ref{coro}}.
\begin{align*}
&\kauf{L(r)}=A^rz^0\subST+T(r)\sum\limits_{i=1}^r(-1)^iA^{4i-2}z^2\subST,\\
&T(wr(L(r)))=T(-r).
\end{align*}
Here, $T(n)$ is defined as in the previous section as $T(n)=(-A^{-3})^n$. Then, applying the definition of the Jones polynomial in $ST$ (\emph{Definition \ref{jones-st}}) we get the general formula for simple lassos shown below.
\begin{equation}\label{L(r)}
J\subST(L(r))=(-t)^{-r}z^0\subST-t^{^1/_2}\frac{1-(-t)^{-r}}{t+1}z^2\subST.
\end{equation}

Let us suppose now that $|t|>1$. We will prove the next result.

\begin{thm}\label{lasso-thm1}
Let $L(r)$ be the family of simple lassos and let $C$ be a knot in $\mathbb{S}^3$ such that $\Delta_C(t)\neq 1$ and $J(C;2)\neq J(U;2)$. Then, $Sat(L(r_1),C)$ and $Sat(L(r_2),C)$ are different knots for $r_1\neq r_2$.
\end{thm}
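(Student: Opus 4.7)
The plan is to compute $J(Sat(L(r),C))$ explicitly via Theorem \ref{main} and formula (\ref{L(r)}), exhibit it as $(-t)^{-r}\alpha+\beta$ with $\alpha,\beta$ depending only on $C$, and show that the hypothesis forces $\alpha\neq 0$. Substituting $z^k\subST=J(C;k)$ in (\ref{L(r)}), and using $J(C;0)=1$ (this is consistent with $Sat(L(0),C)$ being the unknot, given that $\deg L(0)=0$ so $L(0)$ is unlinked from the core of $ST$ and $J\subST(L(0))=z^0\subST$), yields
\[J(Sat(L(r),C))=(-t)^{-r}-t^{1/2}\,\frac{1-(-t)^{-r}}{t+1}\,J(C;2).\]
Collecting the $r$-dependent terms recasts this as $(-t)^{-r}\alpha+\beta$ with
\[\alpha=1+\frac{t^{1/2}J(C;2)}{t+1},\qquad \beta=-\frac{t^{1/2}J(C;2)}{t+1},\]
so $J(Sat(L(r_1),C))-J(Sat(L(r_2),C))=\bigl[(-t)^{-r_1}-(-t)^{-r_2}\bigr]\,\alpha$.

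I must therefore show both that $(-t)^{-r_1}\neq(-t)^{-r_2}$ for $r_1\neq r_2$ and that $\alpha\neq 0$. The first is handled immediately by the standing assumption $|t|>1$, since $|(-t)^{-r}|=t^{-r}$ is strictly monotone in $r$. The second is the heart of the argument. A direct algebraic manipulation reduces $\alpha=0$ to the equation $J(C;2)=-t^{1/2}-t^{-1/2}$; but this right-hand side is precisely $J(U;2)$, because the $0$-framed $2$-parallel of the unknot is the $2$-component unlink, whose Jones polynomial equals $-t^{1/2}-t^{-1/2}$. Equivalently, $\alpha=0$ if and only if $J(C;2)=J(U;2)$, so the hypothesis $J(C;2)\neq J(U;2)$ excludes this case and forces $J(Sat(L(r_1),C))\neq J(Sat(L(r_2),C))$, from which the satellites must be inequivalent knots.

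As a side remark, the other hypothesis $\Delta_C(t)\neq 1$ does not appear to enter the Jones-polynomial separation itself, but it serves to make the theorem substantive: by Proposition \ref{alex}, the satellites $Sat(L(r),C)$ within any fixed parity class of $r$ share the common Alexander polynomial $\Delta_C(t^d)$ with $d\in\{0,2\}$, and $\Delta_C(t)\neq 1$ is exactly what guarantees that this common polynomial is non-trivial, so that the theorem produces a genuine family of infinitely many distinct knots sharing one non-trivial Alexander polynomial. The only subtle step I foresee is the justification of the normalization $J(C;0)=1$, which must be read off from the convention built into Definition \ref{jones-st} and the remark following it.
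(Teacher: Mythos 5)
Your proof is correct, and it is organized differently enough from the paper's to be worth contrasting. The paper first splits into cases by the parity of $r_1,r_2$: when the parities differ it invokes Proposition \ref{alex} and the hypothesis $\Delta_C(t)\neq 1$ to separate the satellites by their Alexander polynomials ($1$ versus $\Delta_C(t^2)$), and only for equal parities does it turn to the Jones polynomial, computing the consecutive difference $J\subST(L(r))-J\subST(L(r+2))=\frac{(-1)^r(t-1)}{t^{r+2}}\big[(t+1)z^0\subST+t^{^1/_2}z^2\subST\big]$ and then extending to arbitrary $r_1,r_2$ of the same parity by iterating these differences by two. You instead write $J(Sat(L(r),C))=(-t)^{-r}\alpha+\beta$ once and for all and compare arbitrary $r_1\neq r_2$ directly; the separation condition you extract, namely $\alpha\neq 0$ if and only if $J(C;2)\neq -t^{^1/_2}-t^{^{-1}/_2}=J(U;2)$, is exactly the paper's condition $(t+1)z^0\subST+t^{^1/_2}z^2\subST\neq 0$ after the substitution $z^k\subST=J(C;k)$, so the analytic core is identical. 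What your route buys is uniformity and economy: no parity case split, no telescoping step, and --- as you correctly observe --- no use of $\Delta_C(t)\neq 1$, which shows that this hypothesis is not needed for the distinctness conclusion itself (it only guarantees, via Proposition \ref{alex}, that the family realizes a non-trivial common Alexander polynomial). Your reading of the normalization $J(C;0)=1$ is consistent with the paper's own use of the substitution in its worked example, where $t^{-2}z^0\subST$ becomes $t^{-2}$, so that step is sound.
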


\begin{proof}
We begin by supposing that the parity of $r_1$ and $r_2$ differ, i.e., $r_1\;mod\;2\neq r_2\;mod\;2$. Then, the result is immediate since their degree differ. Suppose that $r_1$ is odd, and that $r_2$ is even. This would lead to $\Delta_{Sat(L(r_1),C)}=1$ and $\Delta_{Sat(L(r_2),C)}=\Delta_C(t^2)$. However, since $\Delta_C(t)\neq 1$, we have $\Delta_{Sat(L(r_1),C)}\neq \Delta_{Sat(L(r_2),C)}$, and consequently $Sat(L(r_1),C)$ and $Sat(L(r_2),C)$ are also different.

Now, let us suppose that they have the same parity. We will compare the Jones polynomial of two consecutive lassos, $L(r)$ and $L(r+2)$, for which purpose we first consider the difference of their polynomials in $ST$.
\begin{align*}
J\subST(L(r))&-J\subST(L(r+2))=\\
&=(-t)^{-r}z^0\subST-t^{^1/_2}\frac{t^r-(-1)^{r}}{t^r(t+1)}z^2\subST-\bigg[(-t)^{-r-2}z^0\subST-t^{^1/_2}\frac{t^{r+2}-(-1)^{r}}{t^{r+2}(t+1)}z^2\subST\bigg]\\
&=(-1)^r\bigg(\frac{1}{t^r}-\frac{1}{t^{r+2}}\bigg)z^0\subST-t^{^1/_2}\bigg[\frac{t^r-(-1)^{r}}{t^r(t+1)}-\frac{t^{r+2}-(-1)^{r}}{t^{r+2}(t+1)}\bigg]z^2\subST\\
&=(-1)^r\bigg(\frac{t^2}{t^{r+2}}-\frac{1}{t^{r+2}}\bigg)z^0\subST-t^{^1/_2}\bigg[\frac{t^r-(-1)^{r}}{t^r(t+1)}-\bigg(\frac{t^r-(-1)^{r}}{t^{r+2}(t+1)}+\frac{(t^2-1)t^r}{t^{r+2}(t+1)}\bigg)\bigg]z^2\subST
\end{align*}
\begin{align*}
&=(-1)^r\frac{t^2-1}{t^{r+2}}z^0\subST-t^{^1/_2}\bigg[(t-1)\frac{t^r-(-1)^{r}}{t^{r+2}}-\frac{(t-1)t^r}{t^{r+2}}\bigg]z^2\subST\\
&=(-1)^r\frac{t^2-1}{t^{r+2}}z^0\subST-t^{^1/_2}(t-1)\frac{-(-1)^{r}}{t^{r+2}}z^2\subST=\frac{(-1)^r(t-1)}{t^{r+2}}\big[(t+1)z^0\subST+t^{^1/_2}z^2\subST\big].
\end{align*}

These polynomials will then be equal only if this difference is $0$. In other words, $J\subST(L(r))=J\subST(L(r+2))$ if and only if $(t+1)z^0\subST+t^{^1/_2}z^2\subST=0$, since we are on the hypothesis that $|t|>1$. This last equality translates into $z^2\subST=(-t^{^1/_2}-t^{^{-1}/_2})z^0\subST$. Hence, when considering the satellites of these knots with the companion $C$ and repeating the calculus for the satellite case using \emph{Theorem \ref{main}}, the condition for $J(Sat(L(r),C))$ and $J(Sat(L(r+2),C))$ to be distinct is as follows:
\[J(C;2)\neq-t^{^1/_2}-t^{^{-1}/_2}=J(U;2),\]
where $U$ is the unknot, as denoted in the previous section.

This condition holds if we extend the result to any arbitrary $r_1$ and $r_2$ having these the same parity (not necessarily being consecutive). It is enough to apply this result to the partial differences by two between $r_1$ and $r_2$ repeatedly to get the result.
\end{proof}

%\begin{nrmrk}
%Please note that if $C$ is the unknot the proposition does not apply, which is what we would expect since any lasso made satellite through the unknot is trivial again.
%\end{nrmrk}

Now, if one considers $J(Sat(L(r),C))$ as a series of polynomials on $t$ where $r$ is the index, one can even calculate its limit, and we get the next result.

\begin{lmm}
If $|t|>1$, we obtain the following limit.
\[\lim\limits_{r\rightarrow\infty}J(Sat(L(r),C))=\frac{J(C;2)}{J(U;2)}.\]
\end{lmm}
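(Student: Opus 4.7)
The plan is to start from the explicit formula (\ref{L(r)}) for $J\subST(L(r))$, feed it through \emph{Theorem \ref{main}} to get $J(Sat(L(r),C))$ in closed form, and then take the limit $r\to\infty$ directly. Under the hypothesis $|t|>1$, the dependence on $r$ will reduce to the single factor $(-t)^{-r}$, which vanishes in the limit.

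Concretely, the first step is to apply the substitution $z^k\subST \mapsto J(C;k)$ from \emph{Theorem \ref{main}} to formula (\ref{L(r)}). For $k=0$, the basis element $z^0\subST$ represents an extra unknotted component not wrapping around the core of $ST$, which contributes an isolated trivial loop in the satellite construction and hence substitutes as $J(C;0)=1$ (in agreement with the remark following \emph{Definition \ref{jones-st}}). This yields the closed form
\[J(Sat(L(r),C)) = (-t)^{-r} - t^{^1/_2}\,\frac{1-(-t)^{-r}}{t+1}\,J(C;2).\]

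Next, I would let $r\to\infty$. Since $|t|>1$, we have $|(-t)^{-r}|=|t|^{-r}\to 0$, so both occurrences of $(-t)^{-r}$ drop out, and the formula collapses to
\[\lim_{r\to\infty} J(Sat(L(r),C)) = -\frac{t^{^1/_2}}{t+1}\,J(C;2).\]

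Finally, I would identify the coefficient $-t^{^1/_2}/(t+1)$ with $1/J(U;2)$: as the $2$-parallel of the unknot is the $2$-component unlink, we have $J(U;2)=-t^{^1/_2}-t^{^{-1}/_2}=-t^{^{-1}/_2}(t+1)$, whence $1/J(U;2)=-t^{^1/_2}/(t+1)$. Combined with the previous display, this gives exactly $J(C;2)/J(U;2)$. The argument is essentially a one-line limit applied to a formula we already possess, so I do not anticipate any real obstacle; the only delicate point is confirming that the $k=0$ substitution evaluates to $1$, which follows from the normalization built into the skein-module framework of Section 2 and is already implicitly used in the proof of \emph{Theorem \ref{lasso-thm1}}.
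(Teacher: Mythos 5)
Your proof is correct and follows essentially the same route as the paper: both start from formula (\ref{L(r)}), use \emph{Theorem \ref{main}} to pass to the satellite, observe that $(-t)^{-r}\to 0$ for $|t|>1$, and identify $-t^{^1/_2}/(t+1)$ with $1/J(U;2)$. The only (immaterial) difference is that the paper takes the limit of $J\subST(L(r))$ in the skein module first and substitutes $z^k\subST=J(C;k)$ afterwards, whereas you substitute first and then take the limit.
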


\begin{proof}
We first calculate the limit of the Jones polynomials in $ST$ of the lasso family.
\[\lim\limits_{r\rightarrow\infty}J\subST(L(r))=\lim\limits_{r\rightarrow\infty}\bigg[\bigg(\frac{-1}{t}\bigg)^{r}z^0\subST-t^{^1/_2}\frac{1-(-1/t)^r}{t+1}z^2\subST\bigg]=-\frac{t^{^1/_2}}{t+1}z^2\subST=\frac{z^2\subST}{-t^{^1/_2}-t^{^{-1}/_2}}.\]
Then, we proceed to make use of it.
\[\lim\limits_{r\rightarrow\infty}J(Sat(L(r),C))=\lim\limits_{r\rightarrow\infty}J\subST(L(r))\Big|_{z^k\subST=J(C;\:k)}=\frac{z^2\subST}{-t^{^1/_2}-t^{^{-1}/_2}}\Big|_{z^k\subST=J(C;\:k)}=\frac{J(C;2)}{J(U;2)}.\]
\end{proof}

\begin{nrmrk}
It is interesting to notice that if $C$ is the trivial knot, the limit equals to $1$, which, on the other hand, is obvious since the satellite knots of lassos through the trivial knot are trivial again. This also holds for any knot (whether it exists or not) whose 2-parallel Jones polynomial equals to the one of the unknot, but that was also predictable since the Jones polynomial of a satellite knot of a lasso with such knot is, in the first place, also constantly $1$ (see equation \ref{L(r)} and apply it to construct a satellite knot with such $C$). 
%%%%%%%%%%%%%%Si tuviese el resultado de los cruces podría decir que un satélite con cualquier C tiene estrictamente más cruces.
%Quiero intentar probar que si C tiene el mismo 2-col que el unknot, entonces lo es. Para ello veo que su límite es 1 (último comentario) para todo lasso. Eso quiere decir que el límite es un satélite con infinitos cruces con polinomio 1.
\end{nrmrk}

\subsection{$L(1,r)$: lassos of degree 1 or 3}
In this subsection we will make the same assumptions as in the previous one. Let us start by giving the basic results regarding the Kauffman bracket and Jones polynomial in $ST$ of the family $L(1,r)$.
\begin{align*}
&\kauf{L(1,r)}=(-1)^rA^{1-3r}z^1\subST+A^{-1}z^3\subST\kauf{L(r)},\\
&T(wr(L(1,r)))=T(-1-r),
\end{align*}
where, again, $T(n)=(-A^{-3})^n$. Next, we give the formula for the Jones polynomial, which is also proven by using \emph{Proposition \ref{lasso-prop}}.
\[J\subST(L(1,r))=\big(-t^{-1}+(-t)^{-r}(t^{-1}+1)\big)z^1\subST+z^3\subST\frac{1-(-t)^{-r}}{t+1}.\]

Suppose now that $|t|>1$. Then we have the next result.

\begin{thm}\label{lasso-thm2}
Consider the family of lassos $L(1,r)$ and let $C$ be a knot in $\mathbb{S}^3$ such that $\Delta_C(t)\neq 1$ and $J(C;3)\neq J(C)J(U;3)$. Then, $Sat(L(1,r_1),C)$ and $Sat(L(1,r_2),C)$ are different knots for $r_1\neq r_2$.
\end{thm}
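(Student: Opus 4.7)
My plan is to follow the template of the proof of Theorem \ref{lasso-thm1} almost verbatim. First I will dispatch the case where $r_1$ and $r_2$ have different parities by means of the Alexander polynomial. From Definition \ref{degree} one computes $\bar{r}_0=\bar{r}_1=1$ (using $r_1=1$) and $\bar{r}_2=(-1)^{1+r}$, so $\deg L(1,r)=3$ when $r$ is odd and $\deg L(1,r)=1$ when $r$ is even. Proposition \ref{alex} then yields $\Delta_{Sat(L(1,r_1),C)}$ and $\Delta_{Sat(L(1,r_2),C)}$ equal to $\Delta_C(t)$ and $\Delta_C(t^3)$ in some order; since the normalization $\Delta_C(1)=1$ combined with $\Delta_C(t)\neq 1$ forces $\Delta_C$ to have positive degree, the two polynomials have distinct degrees and hence the satellites are distinct knots.

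For same-parity $r_1,r_2$ I will compute the difference $J\subST(L(1,r))-J\subST(L(1,r+2))$ directly from the explicit formula established just before the theorem. Noting that $(-t)^{-r-2}=t^{-2}(-t)^{-r}$, a common factor $(-t)^{-r}(1-t^{-2})$ can be extracted from both coefficients, and after simplification I expect
\[
J\subST(L(1,r))-J\subST(L(1,r+2))=\frac{(-1)^{r}(t-1)}{t^{r+3}}\Bigl[(t+1)^{2}\,z^{1}\subST-t\,z^{3}\subST\Bigr].
\]
Under the hypothesis $|t|>1$ the scalar prefactor never vanishes, so the two Jones polynomials in $ST$ coincide if and only if the bracketed term vanishes, i.e.\ iff $z^{3}\subST=\tfrac{(t+1)^{2}}{t}\,z^{1}\subST$.

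The crux of the argument is to recognize $\tfrac{(t+1)^{2}}{t}=(-t^{1/2}-t^{-1/2})^{2}=J(U;3)$, the Jones polynomial of the $3$-component unlink (the $0$-framed $3$-parallel of the unknot). Passing to the satellites via Theorem \ref{main} substitutes $z^{k}\subST$ with $J(C;k)$ and, in particular, $z^{1}\subST$ with $J(C;1)=J(C)$. The obstruction to the two satellite Jones polynomials being equal therefore becomes $J(C;3)=J(U;3)\,J(C)$, which is precisely ruled out by the hypothesis. For non-consecutive same-parity $r_1\neq r_2$ I will either iterate this step as in Theorem \ref{lasso-thm1}, or observe more directly that $J(Sat(L(1,r),C))$ is an affine function of $(-t)^{-r}$ whose non-constant coefficient is a nonzero scalar multiple of $tJ(C;3)-(t+1)^{2}J(C)$; distinct same-parity exponents $r$ then produce distinct powers $(-t)^{-r}$ and hence distinct polynomials.

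The main obstacle I anticipate is the algebraic bookkeeping of the difference: the sign $(-1)^{r}$, the factors of $t\pm 1$, and the exponents of $(-t)^{-r}$ have to be tracked carefully across both coefficients so that the factorization with bracketed term $(t+1)^{2}z^{1}\subST-t\,z^{3}\subST$ emerges cleanly. Once that is in hand, identifying $(t+1)^{2}/t$ with $J(U;3)$ is immediate, and the rest of the argument mirrors the structure of Theorem \ref{lasso-thm1}.
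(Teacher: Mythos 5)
Your proposal is correct and follows essentially the same route as the paper: dispatch the different-parity case via the Alexander polynomial (degrees $1$ and $3$), compute $J\subST(L(1,r))-J\subST(L(1,r+2))=\frac{(-1)^r(t-1)}{t^{r+2}}\bigl[\frac{(t+1)^2}{t}z^1\subST-z^3\subST\bigr]$, and identify $\frac{(t+1)^2}{t}=(-t^{1/2}-t^{-1/2})^2=J(U;3)$ so that Theorem \ref{main} turns the vanishing condition into $J(C;3)=J(C)J(U;3)$. Your alternative closing observation --- that $J\subST(L(1,r))$ is affine in $(-t)^{-r}$ with non-constant coefficient a nonzero multiple of $(t+1)^2z^1\subST-tz^3\subST$ --- is a clean way to handle non-consecutive $r_1,r_2$ at once, but the argument is otherwise the same as the paper's.
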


\begin{proof}
The modus operandi is exactly the same as in the previous theorem. As before, if the parity of $r_1$ and $r_2$ differ, the Alexander polynomials of the satellite knots arising from these lassos would equal one to $\Delta_C(t)$ and the other to $\Delta_C(t^3)$. Since we assumed that $\Delta_C(t)\neq 1$, then those polynomials (hence those satellites) are distinct.

It remains now to analyze the case in which $r_1$ and $r_2$ have the same parity. Operating as before, the result for the difference that one gets is the following.
\[J\subST(L(1,r))-J\subST(L(1,r+2))=\frac{(-1)^r(t-1)}{t^{r+2}}\bigg[\frac{(t+1)^2}{t}z^1\subST-z^3\subST\bigg].\]
Hence giving the condition below for $J(Sat(L(1,r),C))$ and $J(Sat(L(1,r+2),C))$ to be distinct.
\[J(C;3)\neq J(C)(-t^{^1/_2}-t^{^{-1}/_2})^2=J(C)J(U;3).\]
The validity of this condition also extends to arbitrary $r_1$ and $r_2$ ---by the same procedure as above---, and therefore the result is obtained.
\end{proof}

%\begin{nrmrk}
%It is also important to note in this case that the condition on the companion is not satisfied by the unknot, as it should be.\\
%\end{nrmrk}

We can also calculate the limit of this family of polynomials on $r$.

\begin{lmm}
If $|t|>1$, we obtain the following limit.
\[\lim\limits_{r\rightarrow\infty}J(Sat(L(1,r),C))=-\frac{1}{t}J(C)+\frac{1}{t+1}J(C;3).\]
\end{lmm}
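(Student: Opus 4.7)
The plan is to mirror the argument used for the previous lemma: first take the limit of $J\subST(L(1,r))$ as $r\to\infty$ inside the Kauffman bracket skein module of $ST$, and then pass through the substitution given by \emph{Theorem \ref{main}} to get the Jones polynomial of the satellite.

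Starting from the explicit formula
\[J\subST(L(1,r))=\big(-t^{-1}+(-t)^{-r}(t^{-1}+1)\big)z^1\subST+z^3\subST\frac{1-(-t)^{-r}}{t+1}\]
derived just above the statement, the hypothesis $|t|>1$ forces $(-t)^{-r}\to 0$ as $r\to\infty$, so every summand containing $(-t)^{-r}$ dies in the limit. The coefficients of $z^1\subST$ and $z^3\subST$ converge (as rational functions of $t$) to $-t^{-1}$ and $(t+1)^{-1}$ respectively, giving
\[\lim\limits_{r\rightarrow\infty}J\subST(L(1,r))=-\frac{1}{t}z^1\subST+\frac{1}{t+1}z^3\subST.\]

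Next I would invoke \emph{Theorem \ref{main}} for each $r$, which gives $J(Sat(L(1,r),C))=J\subST(L(1,r))\big|_{z^k\subST=J(C;k)}$. Since this substitution is a $\mathbb{C}$-linear map on $S(ST)$ and only the (scalar) coefficients of $z^1\subST$ and $z^3\subST$ carry the $r$-dependence, the limit commutes with the substitution. Applying the substitution to the limit computed above yields
\[\lim\limits_{r\rightarrow\infty}J(Sat(L(1,r),C))=-\frac{1}{t}J(C;1)+\frac{1}{t+1}J(C;3).\]

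Finally, since the $1$-parallel of $C$ is just $C$ itself with framing $0$, we have $J(C;1)=J(C)$, which gives the claimed formula. There is essentially no hard step here; the only point worth checking is the identity $J(C;1)=J(C)$, which follows from the convention---already used in the proof of \emph{Theorem \ref{main}}---that $J(C;k)$ is computed with $C$ in framing $0$, so no writhe correction intervenes when $k=1$.
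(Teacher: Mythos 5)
Your proof is correct and follows the same route as the paper: compute $\lim_{r\to\infty}J\subST(L(1,r))=-\tfrac{1}{t}z^1\subST+\tfrac{1}{t+1}z^3\subST$ using $|t|>1$, then substitute $z^k\subST=J(C;k)$ via \emph{Theorem \ref{main}}. Your added remarks on commuting the limit with the substitution and on $J(C;1)=J(C)$ only make explicit what the paper leaves implicit.
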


\begin{proof}
As before, we conduct an previous inspection on the limit of the Jones polynomial in $ST$ of $L(1,r)$.
\begin{align*}
\lim\limits_{r\rightarrow\infty}J\subST(L(1,r))&=\lim\limits_{r\rightarrow\infty}\bigg[\bigg(-\frac{1}{t}+\bigg(\frac{-1}{t}\bigg)^{r}\bigg(\frac{1}{t}+1\bigg)\bigg)z^1\subST+z^3\subST\frac{1-(-1/t)^r}{t+1}\bigg]=-\frac{1}{t}z^1\subST+\frac{1}{t+1}z^3\subST.
\end{align*}

Then by substituting when the satellite is taken, we get the result.
\end{proof}

\begin{nrmrk}
Just as in the previous case, the limit of the polynomial of a satellite knot with the unknot as companion is 1, since the Jones polynomial in $ST$ of $L(1,r)$ is also in the first place $1$.
\end{nrmrk}

%%%%%%%%%%%%%%%%%%%%%%%% Si encuentro un nudo cuyo con J(C;2)=J(U;2) (o lo otro), usando esto mismo habria encontrado infinitos nudos no triviales con el polinomio de Jones trivial.

%%%%%%%%%%%%%%%%%%%% Claro, pero eso seria lo mismo que que el Jones primeramente sea 1. O sea, pienso que J(C;2)=J(U;2) implica J(C)=J(U).

%Using KNOT, \cite{Ko}'s software for calculating knot invariants, I have checked that all knots up to 8 crossings satisfy the last two theorems.\\ %%%%%%%%%%%%%%%%%%%%%%%%%%%%%%%%%%%%%% Hay que subir esto

%%%%%%%%%%%%%%%%% Se puede probar que si J(C) tiene breadth tanto, entonces su parallel, obviamente, va a tener mas?

\subsection{Practical example}
In regard to these last results, let us make an easy check on them in the following example.
\begin{exmp}
Using the lassos $L(2)$ (\emph{Whitehead double}) and $L(4)$ as patterns, let us build their satellite knots where the companion of both is the left-handed trefoil $3_1$. Using the fact that $L(2),L(4)\in \mathscr{L}_0$, one has, by \emph{Proposition \ref{alex}}, that the Alexander polynomials of their satellite knots are trivial.
\[\Delta_{Sat(L(2),3_1)}(t)=\Delta_{Sat(L(4),3_1)}(t)=1.\]

Now, here is where \emph{Theorem \ref{main}} comes to scene together with equation \ref{L(r)}. We will show practically how the Jones polynomial of the satellite knots is now easily computed.

First, we calculate the 2-parallel Jones polynomial of $3_1$. In order to do so, I used the software of \cite{Ko}.
\begin{align*}
&\spa{-2}J(3_1;\:2)=-t^{^{-23}/_2}+t^{^{-21}/_2}+t^{^{-17}/_2}-t^{^{-9}/_2}-t^{^{-5}/_2}-t^{^{-1}/_2}.&&&&&&&&
\end{align*}

Next, using equation \ref{L(r)}, we extract the Jones polynomial in $ST$ of $L(2)$, and then we proceed to apply \emph{Theorem \ref{main}}.

\begin{align*}
&J\subST(L(2))=t^{-2}z^0\subST-t^{^1/_2}\frac{1-t^{-2}}{t+1}z^2\subST.\\
&J(Sat(L(2),3_1))=\bigg(t^{-2}z^0\subST-t^{^1/_2}\frac{1-t^{-2}}{t+1}z^2\subST\bigg)\bigg|_{z^k\subST=J(3_1;\:k)}=t^{-2}-t^{^1/_2}\frac{1-t^{-2}}{t+1}J(3_1;\:2)\\
&\qquad =t^{-2}+t^{-13}-2t^{-12}+t^{-11}-t^{-10}+t^{-9}+t^{-6}-t^{-5}+t^{-4}-t^{-3}+t^{-2}-t^{-1}\\
&\qquad =t^{-13}-2t^{-12}+t^{-11}-t^{-10}+t^{-9}+t^{-6}-t^{-5}+t^{-4}-t^{-3}+2t^{-2}-t^{-1}.
\end{align*}

We repeat now the same process with $L(4)$.
\begin{align*}
&J\subST(L(4))=t^{-4}z^0\subST-t^{^1/_2}\frac{1-t^{-4}}{t+1}z^2\subST.\\
&J(Sat(L(4),3_1))=\bigg(t^{-4}z^0\subST-t^{^1/_2}\frac{1-t^{-4}}{t+1}z^2\subST\bigg)\bigg|_{z^k\subST=J(3_1;\:k)}=t^{-4}-t^{^1/_2}\frac{1-t^{-4}}{t+1}J(3_1;\:2)\\
&\qquad =t^{-4}+t^{-15}-2t^{-14}+2t^{-13}-3t^{-12}+2t^{-11}-t^{-10}+t^{-9}+t^{-8}-t^{-7}+2t^{-6}\\
&\qquad -2t^{-5}+2t^{-4}-2t^{-3}+t^{-2}-t^{-1}\\
&\qquad =t^{-15}-2t^{-14}+2t^{-13}-3t^{-12}+2t^{-11}-t^{-10}+t^{-9}+t^{-8}-t^{-7}+2t^{-6}-2t^{-5}\\
&\qquad +3t^{-4}-2t^{-3}+t^{-2}-t^{-1}.
\end{align*}

We see that, indeed, their polynomials are different, which is the result that we knew beforehand since $\Delta_{3_1}(t)\neq 1$ and $J(3_1;\:2)\neq J(U;2)$, as seen in \emph{Theorem \ref{lasso-thm1}}.
\end{exmp}

%\subsection{$L(1,...,1,r)$}
%Mi prediccion (suponiendo que el lasso es $L(r_1,...,r_{m-1},r_m)$):
%\[J\subST(L(1,...,1,r))-J\subST(L(1,...,1,r+2))=\frac{(-1)^r(t-1)}{t^{r+2}}\bigg[\frac{(t+1)^m}{t^{m-1}}z^{m-1}\subST+(-1)^{m+1}z^{m+1}t^{\frac{-1}{4}+(-1)^m\frac{1}{4}}\subST\bigg].\]
%Justo asi la condicion siempre sale:
%\[J(C;m+1)\neq J(C;m-1)(-t^{^1/_2}-t^{^{-1}/_2})^m.\]
%Corrijo, corrijo. Lo que en verdad quiero es:
%\[J(C;m+1)\neq J(C;m-1)(-t^{^1/_2}-t^{^{-1}/_2})^2,\]
%que es el caso del unknot. Para lo cual:
%\[J\subST(L(1,...,1,r))-J\subST(L(1,...,1,r+2))=\frac{(-1)^r(t-1)}{t^{r+2}}\bigg[\frac{(t+1)^2}{t}z^{m-1}\subST-z^{m+1}\subST\bigg].\]
%\textbf{NOOOOOOOOOOOOOOOOOOOOOOOOOOOOOOOOOOO!!!!!!!!!!!!!!!!!!!!!!!!!!!!!}\\
%Cuando m=3 o mas ya salen \textbf{3} zetas y tal.
%\cite{St}

\newpage
\section{Final remarks}
What have we got in the end? A definition of a new kind of knots ---lassos--- inside the solid torus, a powerful result regarding the Alexander polynomial of satellite knots using lassos, and an explicit formula for calculating the Jones polynomial of any satellite knot. Plus, it has been also proven in this thesis that, indeed, the constructions of satellite knots using lassos are essentially distinct for certain families of lassos thanks to the Jones polynomial.

We can also create ``by request'' knots that have the same Alexander polynomial as any other given knot with $t$ taken to the powers of $0$, $1$, $2$ or $3$, being sure that the knot we create is essentially different from the originally given.

In the following lines we will serve ourselves from this well-known result that I mentioned in the Introduction of this paper, and that is an immediate consequence of \emph{Theorem \ref{lick-thm}}.
\[\Delta_{K_1\#K_2}(t)=\Delta_{K_1}(t)\Delta_{K_2}(t).\]

This result expresses the Alexander polynomial of a connected sum of knots in terms of the Alexander polynomials of its components. This is a consequence of the previously mentioned theorem for the Alexander polynomial of satellite knots, since a connected sum is a special case of satelliting. By using this result, now we can easily be told:
\[\textnormal{``I want a knot $K$ whose Alexander polynomial is $\Delta_{K}(t)=\Delta_{5_1}(t)^2\Delta_{8_{19}}(t^3)$''}.\]

We would proceed to build it with a smile on our face. For us, in this case it suffices to use a lasso $L\in\mathscr{L}_3$ ---since there is a $t^3$--- and then claim that such knot could be the following.
\[K=5_1\:\#\:5_1\:\#\:Sat(L(1,1),8_{19}).\]

But if we felt generous, we could also give an infinite amount of examples, where we would only change the lasso to any other of the same degree so that the Alexander polynomial would remain the same. For example, we could set the knot $K$ as the one below.
\[K=5_1\:\#\:5_1\:\#\:Sat(L(-3,7),8_{19}).\]

Or even boast using the same results to present other knots combinations such as:
\[K=5_1\:\#\:5_1\:\#\:Sat(L(1,1),8_{19})\:\#\:Sat(L(2),10_{161}).\]

In the last part of the paper we just proved that the satellites built using lassos are distinct from each other up to degree $3$, but there is no doubt that this can also be generalized to any arbitrary degree (I am currently working on its proof).\\

Let me now finish this summary by recapitulating with a beautiful and simple example. Let us consider the apple of my eyes, the lasso $L(1,2)$. It happens to be the simplest lasso of degree $1$. Consequently, by using it repeatedly we are able to build knots with the exact same Alexander polynomial as a given knot, without further constructions than a simple satellite composition. So given a knot $K$, we have the next equality.
\[\Delta_{Sat(L(1,2),K)}(t)=\Delta_{K}(t).\]

Using this fact, higher compositions can also be considered:
\[\Delta_{K}(t)=\Delta_{Sat(L(1,2),K)}(t)=\Delta_{Sat(L(1,2),Sat(L(1,2),K))}(t)=...\;.\]

But focusing on the first equality, and making $K$ the simplest knot known to man ---the trefoil--- a depiction of the knot $Sat(L(1,2),3_1)$, that has the same Alexander polynomial as the trefoil but different Jones polynomial, is as follows.

\begin{align*}
&\includegraphics[scale=0.3]{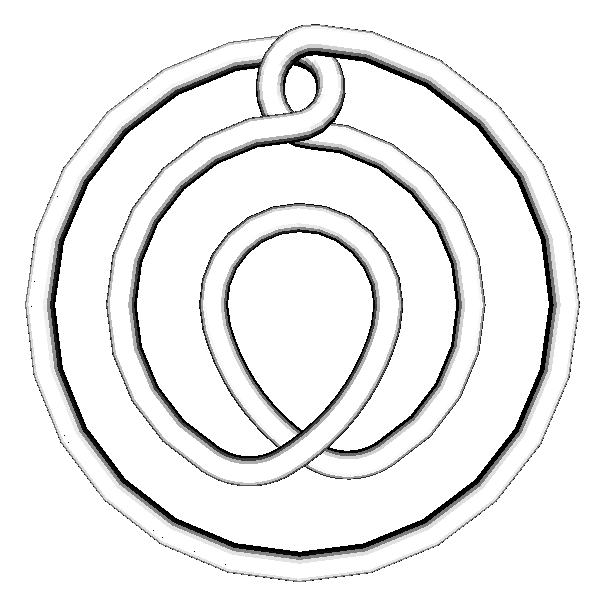}\spa{20}\includegraphics[scale=0.36]{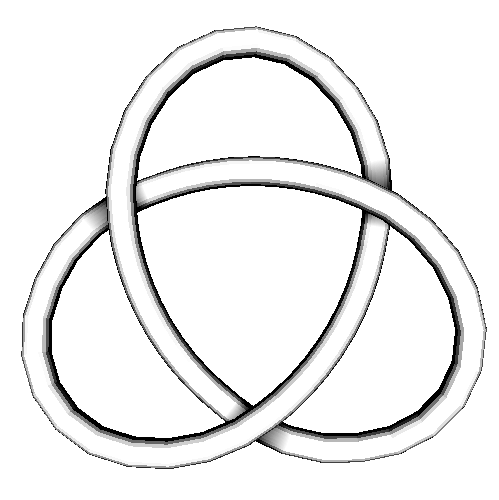}\\
&\spa{20}\spa{11}L(1,2)\spa{20}\spa{20}\spa{20}\spa{20}\spa{6}3_1\\
&\spa{20}\spa{20}\spa{20}\spa{20}\includegraphics[scale=1]{Arrow_down}\\
&\spa{20}\spa{10}\includegraphics[scale=0.36]{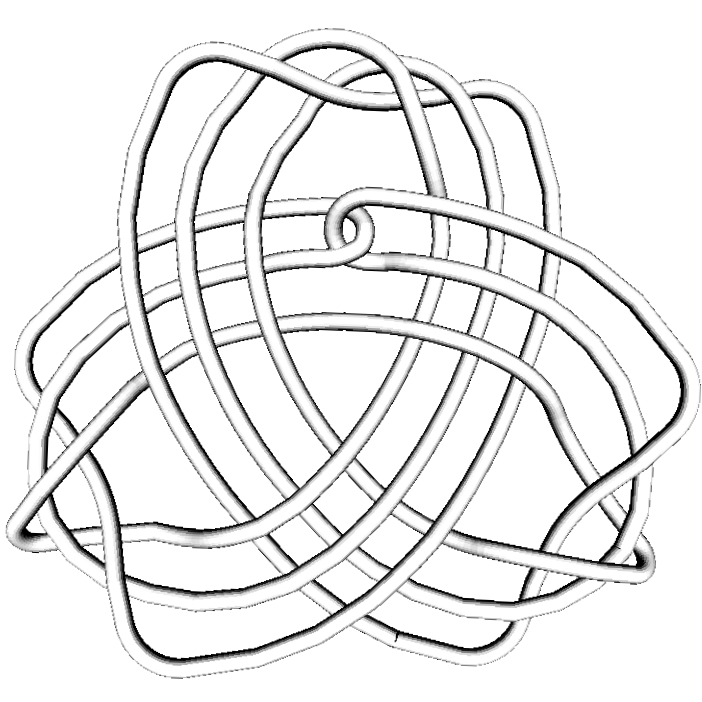}\\
&\spa{20}\spa{20}\spa{20}\spa{11}Sat(L(1,2),3_1)
\end{align*}

\newpage

\section*{Etymology}
The word \emph{lasso} is defined (by the Oxford dictionary) as ``a rope with a noose at one end, used especially in North America for catching cattle''. Its origin dates back to the beginning of the nineteenth century, coming from the Spanish word \emph{lazo}, from Latin \emph{laqueum}, also related to \emph{lace}.\\ %Traces?
In a sense, the lassos here defined ``catch'' the center of the torus and ``immobilize'' it. But \emph{torus} is a word also coming from Latin that degenerated into \emph{toro} in Spanish, in which, in turn, happens to mean \emph{bull} as well. Therefore the simile is round\iffalse{cycle is closed}\fi, and there was no better name for lassos than lassos.

%%%%%%%%%%%% References %%%%%%%%%%%%%

\noindent Graduate School of Mathematical Sciences, \textsc{The University of Tokyo}\\
\emph{E-mail address}: \verb|adri@ms.u-tokyo.ac.jp|
\end{document}